\newcommand\C{\mathbb{C}}
\newcommand\CC{\mathcal{C}}
\newcommand\E{\mathbb{E}}
\newcommand\LL{\mathcal{L}}
\newcommand\N{\mathbb{N}}
\renewcommand\P{\mathbb{P}}
\newcommand\Q{\mathbb{Q}}
\newcommand\R{\mathbb{R}}
\newcommand\T{\mathbb{T}}
\newcommand\TT{\mathcal{T}}
\def\e{\varepsilon}
\DeclareMathOperator*{\Res}{Res}
\newtheorem{theorem}{Theorem}[section]
\newtheorem{conjecture}[theorem]{Conjecture}
\newtheorem{corollary}[theorem]{Corollary}
\newtheorem{lemma}[theorem]{Lemma}
\newtheorem{proposition}[theorem]{Proposition}
\newtheorem*{polyas}{P\'{o}lya's Conjecture}
\newtheorem*{0<alpha<1/2}{The $0 < \alpha < 1/2$ Conjecture}
\newtheorem*{alpha=1/2}{The $\alpha = 1/2$ Conjecture}
\theoremstyle{remark}
\newtheorem{remark}[theorem]{Remark}
\theoremstyle{definition}
\newtheorem{definition}[theorem]{Definition}
\begin{document}

\title[The Distribution of $L_{\alpha}(x)$ and P\'{o}lya's Conjecture]{The Distribution of Weighted Sums of the Liouville Function and P\'{o}lya's Conjecture}

\author{Peter Humphries}

\address{Department of Mathematics, Australian National University, Canberra 0200 ACT, Australia}

\email{Peter.Humphries@anu.edu.au}

\keywords{P\'{o}lya's conjecture, Liouville function}

\subjclass[2010]{Primary: 11N64; Secondary: 11N56, 11M26}

\thanks{This research was partially supported by an Australian Postgraduate Award.}


\begin{abstract}
Under the assumption of the Riemann hypothesis, the Linear Independence hypothesis, and a bound on negative discrete moments of the Riemann zeta function, we prove the existence of a limiting logarithmic distribution of the normalisation of the weighted sum of the Liouville function, $L_{\alpha}(x) = \sum_{n \leq x}{\lambda(n) / n^{\alpha}}$, for $0 \leq \alpha < 1/2$. Using this, we conditionally show that these weighted sums have a negative bias, but that for each $0 \leq \alpha < 1/2$, the set of all $x \geq 1$ for which $L_{\alpha}(x)$ is positive has positive logarithmic density. For $\alpha = 0$, this gives a conditional proof that the set of counterexamples to P\'{o}lya's conjecture has positive logarithmic density. Finally, when $\alpha = 1/2$, we conditionally prove that $L_{\alpha}(x)$ is negative outside a set of logarithmic density zero, thereby lending support to a conjecture of Mossinghoff and Trudgian that this weighted sum is nonpositive for all $x \geq 17$.
\end{abstract}

\maketitle

\section{Introduction}
\label{sectionPolya}

The Liouville function $\lambda(n)$ is defined as the completely multiplicative function satisfying $\lambda(p) = -1$ for each prime $p$. Thus if $n$ has the prime factorisation $n = p_1^{m_1} \cdots p_r^{m_r}$, where the $p_i$ are primes and the $m_i$ are positive integers, then
	\[\lambda(n) = \lambda\left(p_1^{m_1} \cdots p_r^{m_r}\right) = \lambda(p_1)^{m_1} \times \cdots \times \lambda(p_r)^{m_r} = (-1)^{m_1 + \ldots + m_r}.
\]
That is,
	\[\lambda(n) = \begin{cases}
1 & \text{if $n=1$,}	\\
1 & \text{if $n$ has an even number of prime factors counting multiplicities,}	\\
-1 & \text{if $n$ has an odd number of prime factors counting multiplicities.}
\end{cases}\]
We may study the average behaviour of $\lambda(n)$ via
	\[L(x) = \sum_{n \leq x}{\lambda(n)},
\]
the summatory function of the Liouville function. The behaviour of this summatory function is intimately linked to certain properties of the Riemann zeta function. Recall that the Riemann zeta function $\zeta(s)$ is defined for $\Re(s) > 1$ by the Dirichlet series
	\[\zeta(s) = \sum^{\infty}_{n=1}{\frac{1}{n^s}},
\]
or equivalently by the Euler product
	\[\zeta(s) = \prod_p{\frac{1}{1 - p^{-s}}},
\]
and that $\zeta(s)$ extends meromorphically to the entire complex plane with only a simple pole at $s = 1$ with residue $1$. Now as $\lambda(n)$ is completely multiplicative, the Dirichlet series $\sum^{\infty}_{n=1}{\lambda(n) n^{-s}}$ has the Euler product
	\[\sum^{\infty}_{n=1}{\frac{\lambda(n)}{n^s}} = \prod_p{\frac{1}{1 + p^{-s}}},
\]
for $\Re(s) > 1$, and so by comparing Euler products,
	\[\sum^{\infty}_{n=1}{\frac{\lambda(n)}{n^s}} = \frac{\zeta(2s)}{\zeta(s)}
\]
for $\Re(s) > \sigma_c$, the abscissa of convergence of $\sum^{\infty}_{n=1}{\lambda(n) n^{-s}}$. Via partial summation, we therefore obtain the identity
\begin{equation}\label{lambdaext}
\frac{\zeta(2s)}{\zeta(s)} = s\int^{\infty}_{1}{\frac{L(x)}{x^s} \: \frac{dx}{x}}
\end{equation}
for $\Re(s) > \max\{\sigma_c,0\}$. The identity \eqref{lambdaext} implies that $L(x) = O(x^{\max\{\sigma_c+\e,0\}})$ for every $\e > 0$. As $|\lambda(n)| = 1$ for all $n \in \N$, we certainly know that $\sigma_c \leq 1$. On the other hand, the zeroes of $\zeta(s)$ along the line $\Re(s) = 1/2$ ensure that $\sigma_c \geq 1/2$. Indeed, as $\zeta(2s)$ is holomorphic for $\Re(s) > 1/2$, the zeroes of $\zeta(s)$ in the strip $1/2 < \Re(s) < 1$ determine and are determined by the behaviour of $L(x)$.

\begin{theorem}
The Riemann hypothesis is equivalent to the statement
	\[L(x) = O(x^{1/2+\e})
\]
for every $\e > 0$, where the implied constant is dependent on $\e$.
\end{theorem}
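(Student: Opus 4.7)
The plan is to exploit the Mellin-type identity \eqref{lambdaext} in both directions. For the implication ``$L(x) = O(x^{1/2+\e})$ for every $\e > 0$ implies the Riemann hypothesis'', I would observe that under this bound the integral on the right-hand side of \eqref{lambdaext} converges absolutely and locally uniformly for $\Re(s) > 1/2$, so it defines a holomorphic function of $s$ throughout that half-plane. By uniqueness of analytic continuation this holomorphic function must coincide with $\zeta(2s)/\zeta(s)$, forcing the latter to be holomorphic on $\Re(s) > 1/2$. Since $\zeta(2s)$ is itself holomorphic and nonvanishing on $\Re(s) > 1/2$ apart from a simple pole at $s = 1/2$, any zero of $\zeta(s)$ in the strip $1/2 < \Re(s) < 1$ would produce a pole of the quotient, a contradiction. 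Hence $\zeta(s) \neq 0$ for $\Re(s) > 1/2$, which is the Riemann hypothesis.

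For the converse, assuming the Riemann hypothesis, I would invert \eqref{lambdaext} using Perron's formula with $c = 1 + 1/\log x$:
\[
L(x) = \frac{1}{2\pi i}\int_{c-iT}^{c+iT} \frac{\zeta(2s)}{\zeta(s)} \frac{x^s}{s} \, ds + E(x,T),
\]
where $E(x,T)$ is a standard truncation error. Under the Riemann hypothesis the integrand is holomorphic on $\Re(s) > 1/2$ except for a simple pole at $s = 1/2$ coming from $\zeta(2s)$; the pole of $\zeta(s)$ at $s = 1$ becomes a zero of the integrand rather than a pole, so no other residues arise. I would then shift the contour to $\Re(s) = 1/2 + \e$, invoking the standard Riemann-hypothesis-conditional bounds $\zeta(\sigma + it) \ll_\e |t|^\e$ and $1/\zeta(\sigma + it) \ll_\e |t|^\e$, valid uniformly on $\sigma \geq 1/2 + \delta$ for any fixed $\delta > 0$, to control both the horizontal connecting segments and the new vertical integral. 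The residue at $s = 1/2$ contributes a term of size $O(x^{1/2})$, the shifted vertical integral contributes $O(x^{1/2+\e})$, and a judicious choice of $T$ absorbs the Perron error into the same bound.

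The main obstacle I anticipate is the quantitative interplay between the truncation height $T$, the length of the horizontal connecting segments of the contour, and the conditional bound on $1/\zeta(s)$ near the critical line (which degrades as $\sigma \to 1/2^+$). These parameters must be chosen compatibly so that every contribution, including the Perron error term and the horizontal segments, fits inside $O(x^{1/2+\e})$ with an implied constant depending only on $\e$. Once this bookkeeping is in place, the residue calculation at $s = 1/2$ and the vertical shift itself proceed routinely.
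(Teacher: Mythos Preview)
Your proposal is correct and follows the standard argument for this classical equivalence. Note, however, that the paper does not actually supply a proof of this theorem: it is stated in the introduction as a known result, with only the brief remark preceding it that the identity \eqref{lambdaext} yields $L(x) = O(x^{\max\{\sigma_c+\e,0\}})$. The machinery you invoke (Perron's formula, contour shifting to $\Re(s) = 1/2 + \e$, and the conditional bounds on $\zeta$ and $1/\zeta$) is exactly what the paper later develops in detail in Section~\ref{sectionExpression} to obtain the finer explicit formula of Theorem~\ref{ThmL(x)alongT_v}, so your approach is entirely in the spirit of the paper and there is nothing to correct.
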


A slightly stronger condition on $L(x)$ can be gleaned through the following result of Landau.

\begin{lemma}[Landau {\cite[Lemma 15.1]{Montgomery}}]{\label{Landau}}
Let $F(x)$ be a real-valued function that is bounded and integrable on any finite interval $[1,X]$, and suppose that there exists $x_0 \in [1,\infty)$ such that $F(x)$ is of constant sign on $[x_0,\infty)$. Let $\sigma_c$ be the infimum of the set of points $\sigma \in \R$ for which $\int^{\infty}_{x_0}{F(x)x^{-\sigma-1} \:dx}$ is finite. Then $s\int^{\infty}_{1}{F(x)x^{-s-1} \: dx}$ is holomorphic in the open half-plane $\Re(s) > \sigma_c$ with a singularity at the point $\sigma_c$.
\end{lemma}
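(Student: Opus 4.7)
The plan is to follow Landau's classical contradiction argument. Write $f(s) = s\int_1^\infty F(x) x^{-s-1}\, dx$. Holomorphy of $f$ on $\Re(s) > \sigma_c$ is the easy half: the constant-sign hypothesis on $[x_0,\infty)$ upgrades real-variable convergence of $\int_{x_0}^\infty F(x) x^{-\sigma-1}\, dx$ at any $\sigma > \sigma_c$ to absolute convergence, and combined with the boundedness and integrability of $F$ on $[1,x_0]$ this gives locally uniform absolute convergence on $\Re(s) > \sigma_c$, hence holomorphy by the standard differentiation-under-the-integral argument.

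The substantive half is the singularity at $\sigma_c$, which I would establish by contradiction. Suppose $f$ extended holomorphically to an open neighbourhood of $\sigma_c$; then I could pick a real $\sigma_0 > \sigma_c$ and a radius $r > \sigma_0 - \sigma_c$ so that $f$ is holomorphic on the disk $|s - \sigma_0| < r$, and hence its Taylor series at $\sigma_0$ converges throughout this disk. At $\sigma_0$ the defining integral converges absolutely, so I may differentiate under the integral to compute the Taylor coefficients of $f$ at $\sigma_0$ in terms of the moments $\int_1^\infty F(x) (-\log x)^k x^{-\sigma_0-1}\, dx$. Evaluating the Taylor series at a real $\sigma_1$ with $\sigma_0 - r < \sigma_1 < \sigma_c$ and formally exchanging summation with integration, the exponential series recombines to give $f(\sigma_1) = \sigma_1 \int_1^\infty F(x) x^{-\sigma_1-1}\, dx$; since $\sigma_1 < \sigma_c$, the convergence of this integral contradicts the definition of $\sigma_c$.

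The principal obstacle, and precisely the reason the constant-sign hypothesis is built into the lemma, is the interchange of sum and integral. I would split the range of integration at $x_0$: on $[1,x_0]$ the function $F$ is bounded and the swap is routine. On $[x_0,\infty)$ the summand $F(x) (\sigma_0 - \sigma_1)^k (\log x)^k x^{-\sigma_0-1}/k!$ has one fixed sign jointly in $(k,x)$, because $F$ has constant sign there and the factors $(\sigma_0-\sigma_1)^k$ and $(\log x)^k$ are nonnegative for $\sigma_1 < \sigma_0$ and $x \geq 1$. Tonelli's theorem therefore legitimises the exchange and completes the argument. Without constancy of sign one would lack the nonnegativity needed to apply Tonelli, and the proof would collapse.
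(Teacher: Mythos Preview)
The paper does not supply its own proof of this lemma; it is quoted verbatim from Montgomery--Vaughan with a citation, so there is nothing to compare against. Your argument is the classical Landau proof and is correct in outline.

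One small point worth tightening: the fixed-sign summand you write down, $F(x)(\sigma_0-\sigma_1)^k(\log x)^k x^{-\sigma_0-1}/k!$, corresponds to the Taylor expansion of $g(s)=\int_1^\infty F(x)x^{-s-1}\,dx$, not of $f(s)=s\,g(s)$. The cleanest fix is to observe that if $f$ extends holomorphically past $\sigma_c$ and $\sigma_c\neq 0$, then $g=f/s$ extends as well, and then run the Tonelli step on $g$ exactly as you have written it. (The edge case $\sigma_c=0$ is a genuine wrinkle in the \emph{statement} rather than in your proof---take $F\equiv 1$---but it does not arise in any of the paper's applications, where $\sigma_c\geq 1/2-\alpha>0$.)
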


We may take $f(n) = \lambda(n)$ in this theorem and use the fact that $\zeta(\sigma) \neq 0$ for all $\sigma > 0$ in order to obtain the following corollary.

\begin{corollary}
If $L(x)$ is of constant sign for all sufficiently large $x$, then the Riemann hypothesis holds.
\end{corollary}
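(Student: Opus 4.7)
The plan is to apply Landau's lemma (Lemma~\ref{Landau}) to $F(x) = L(x)$ if $L$ is eventually nonnegative and to $F(x) = -L(x)$ otherwise; the hypothesis that $L$ is of constant sign on $[x_0,\infty)$ is precisely what Landau requires, and $L$ is certainly bounded on every finite interval $[1,X]$ since $|L(x)| \le x$. Splitting
\[
s\int_1^\infty L(x)\, x^{-s-1} \, dx = s\int_1^{x_0} L(x)\, x^{-s-1} \, dx + s\int_{x_0}^\infty L(x)\, x^{-s-1} \, dx,
\]
the first integral on the right is entire in $s$, while by \eqref{lambdaext} the left-hand side equals $\zeta(2s)/\zeta(s)$ wherever the Dirichlet series converges. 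Landau's lemma, applied to $\pm L$, therefore tells me that $\zeta(2s)/\zeta(s)$ is holomorphic on some half-plane $\Re(s) > \sigma_c$ and fails to be holomorphic at the real point $s = \sigma_c$.

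Next I would locate $\sigma_c$ by listing the possible real singularities of $\zeta(2s)/\zeta(s)$ on $(0,\infty)$. These come either from zeros of $\zeta(s)$ or from the simple pole of $\zeta(2s)$ at $s = 1/2$. Now $\zeta(\sigma) \neq 0$ for every real $\sigma > 0$: this follows from the Euler product on $(1,\infty)$, and on $(0,1)$ from the classical identity $(1-2^{1-\sigma})\zeta(\sigma) = \sum_{n=1}^\infty (-1)^{n-1} n^{-\sigma}$, whose right side is positive and whose left factor is negative, forcing $\zeta(\sigma) < 0$. The pole of $\zeta$ at $s = 1$ moreover contributes a zero, not a pole, of $\zeta(2s)/\zeta(s)$ at $s = 1$. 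Hence the unique real singularity of $\zeta(2s)/\zeta(s)$ in $(0,\infty)$ is at $s = 1/2$. Since $|\lambda(n)| = 1$ forces $\sigma_c \le 1$, I conclude $\sigma_c = 1/2$.

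It follows that $\zeta(2s)/\zeta(s)$ is holomorphic on the half-plane $\Re(s) > 1/2$. Because $\zeta(2s)$ itself is zero-free there (as $\Re(2s) > 1$), every zero of $\zeta(s)$ in $\Re(s) > 1/2$ would produce a pole of $\zeta(2s)/\zeta(s)$ in that region; the absence of such poles is therefore the Riemann hypothesis. The one point requiring care is the identification of Landau's abscissa $\sigma_c$ with the rightmost real singularity of the meromorphic function $\zeta(2s)/\zeta(s)$: the holomorphic extension furnished by Landau's lemma must coincide with the meromorphic continuation coming from \eqref{lambdaext}, which is guaranteed by analytic continuation on their common half-plane of convergence. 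Once this bookkeeping is in place, the chain of implications is immediate.
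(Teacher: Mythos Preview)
Your argument is correct and follows exactly the route the paper indicates: apply Landau's lemma (Lemma~\ref{Landau}) to $F = \pm L$ and invoke the fact that $\zeta(\sigma) \neq 0$ for all real $\sigma > 0$, so that the only real singularity of $\zeta(2s)/\zeta(s)$ on $(0,\infty)$ is the pole at $s = 1/2$, whence $\sigma_c = 1/2$ and $\zeta(s)$ is zero-free for $\Re(s) > 1/2$. The paper's own justification is precisely this one-line application of Landau together with the nonvanishing of $\zeta$ on the positive real axis; your write-up simply fills in the details (the sign of $\zeta$ on $(0,1)$, the r\^ole of the pole at $s=1$, and the identity-principle bookkeeping) that the paper leaves implicit.
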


The former statement and some numerical evidence, namely calculations of $L(x)$ up to $x = 1500$, led to the following conjecture of P\'{o}lya in 1919.

\begin{polyas}[{\cite{Polya}}]
For all $x \geq 2$, we have that
	\[L(x) \leq 0.
\]
\end{polyas}

This conjecture would of course imply the Riemann hypothesis, and in fact something slightly stronger; that all of the zeroes of the Riemann zeta function are simple, as we shall show in Section \ref{sectionConditional}. However, it was soon seen that P\'{o}lya's conjecture leads to an overly restrictive condition on the zeroes of $\zeta(s)$. More precisely, Ingham \cite{Ingham} showed that P\'{o}lya's conjecture implies that the positive imaginary parts of the zeroes of the Riemann zeta function are linearly dependent over the rationals. Such a nontrivial relation seems quite unlikely to be true; indeed, it is conjectured that positive imaginary parts of the zeroes of the Riemann zeta function are linearly independent over the rationals, which we call the Linear Independence hypothesis. It therefore came as no major surprise when Haselgrove \cite{Haselgrove} announced in 1958 a disproof of P\'{o}lya's conjecture. In fact, Tanaka \cite{Tanaka} has since shown that the smallest such value of $x \geq 2$ for which $L(x) > 0$ is $x = 906 \, 150 \, 257$.

Motivated by this problem, Mossinghoff and Trudgian \cite{Mossinghoff} look at generalisations of P\'{o}lya's conjecture by studying weighted sums of the Liouville function.

\begin{definition}
The weighted sum of the Liouville function with weight $\alpha \in \R$ is the summatory function
	\[L_{\alpha}(x) = \sum_{n \leq x}{\frac{\lambda(n)}{n^{\alpha}}}.
\]
\end{definition}

When $\alpha = 0$, this is the summatory function of the Liouville function. Mossinghoff and Trudgian study the possibility of the eventual constancy of sign of $L_{\alpha}(x)$ for sufficiently large $x$, and determine that for certain ranges of $\alpha$, namely $0 \leq \alpha \leq 1$, this is closely related to the Riemann hypothesis. For this range of $\alpha$, the same argument as for \eqref{lambdaext} yields the identity
	\[\frac{\zeta(2(\alpha + s))}{\zeta(\alpha + s)} = s\int^{\infty}_{1}{\frac{L_{\alpha}(x)}{x^s} \: \frac{dx}{x}}
\]
for $\Re(s) > 1 - \alpha$, and Proposition \ref{Landau} then shows that the constancy of sign of $L_{\alpha}(x)$ for sufficiently large $x$ implies the Riemann hypothesis. Mossinghoff and Trudgian then go on to prove that for $1/2 < \alpha < 1$, the Riemann hypothesis implies the eventual constancy of sign of $L_{\alpha}(x)$, and so these two problems are equivalent. For $0 \leq \alpha < 1/2$ and $\alpha = 1$, however, Ingham's argument can be modified to prove that the eventual constancy of sign of $L_{\alpha}(x)$ contradicts the Linear Independence hypothesis, and so for these values of $\alpha$ we would expect $L_{\alpha}(x)$ to change sign infinitely often. This has been proven unconditionally to occur for $\alpha = 0$ and $\alpha = 1$ \cite{Borwein}, but no such sign changes have been found in the range $0 < \alpha < 1/2$. This leads to the following conjecture of Mossinghoff and Trudgian.

\begin{0<alpha<1/2}[Mossinghoff--Trudgian {\cite[Problem 1]{Mossinghoff}}]
For $0 < \alpha < 1/2$, the weighted sum $L_{\alpha}(x)$ changes sign infinitely often.
\end{0<alpha<1/2}

Mossinghoff and Trudgian give a heuristic argument that $L_{\alpha}(x)$ is predominantly negative for $0 < \alpha < 1/2$ \cite[\S 2.1]{Mossinghoff}, and they calculate $L_{1/4}(x)$ up to $x = 10^{12}$ and show that it is always negative for $11 \leq x \leq 10^{12}$ \cite[\S 4]{Mossinghoff}.

Finally, the case $\alpha = 1/2$ is of particular interest. Here Ingham's argument no longer applies, and indeed a heuristic argument of Mossinghoff and Trudgian \cite[\S 2.3]{Mossinghoff} suggests that $L_{1/2}(x)$ truly is eventually of constant sign, which is strengthened by computational evidence showing that $L_{1/2}(x) \leq 0$ for $17 \leq x \leq 10^{12}$ \cite[\S 4]{Mossinghoff}.

\begin{alpha=1/2}[Mossinghoff--Trudgian {\cite[Problem 3]{Mossinghoff}}]
The weighted sum $L_{1/2}(x)$ is nonpositive for all $x \geq 17$.
\end{alpha=1/2}

The aim of this paper is to study these conjectures by following the methods of Rubinstein and Sarnak \cite{Rubinstein} and Ng \cite{Ng} in calculating the logarithmic densities $\delta(P_{\alpha})$ of the sets
	\[P_{\alpha} = \left\{x \in [1,\infty) : L_{\alpha}(x) \leq 0\right\}
\]
for $0 \leq \alpha \leq 1/2$. Recall that for a measurable set $P \in [1,\infty)$, we let
\begin{align*}
\underline{\delta}(P) & = \liminf_{X \to \infty} \frac{1}{\log X} \int\limits_{\left\{x \in [1,X] \cap P\right\}}{\frac{dx}{x}},	\\
\overline{\delta}(P) & = \limsup_{X \to \infty} \frac{1}{\log X} \int\limits_{\left\{x \in [1,X] \cap P\right\}}{\frac{dx}{x}}
\end{align*}
be the lower and upper logarithmic densities respectively of $P$, and if these two limits are equal, we define the logarithmic density $\delta(P)$ of $P$ by
	\[\delta(P) = \underline{\delta}(P) = \overline{\delta}(P).
\]
It is clear that if $\delta(P)$ exists, then $0 \leq \delta(P) \leq 1$, and in particular that if $P$ has finite Lebesgue measure, then $\delta(P) = 0$. It is, of course, more common to consider the natural density
	\[\lim_{Y \to \infty} \frac{1}{Y} \int\limits_{\left\{y \in [1,Y] \cap P\right\}}{dy}.
\]
It is not hard to show that if the natural density of a set $P$ exists, then the logarithmic density of $P$ also exists and is equal to the natural density; we note, however, that the converse is not true.

For the logarithmic densities $\delta(P_{\alpha})$, we prove the following conditional results, which rely on the Riemann hypothesis (which we abbreviate to RH), the Linear Independence hypothesis (which we abbreviate to LI), and a bound
	\[J_{-1}(T) = \sum_{0 < \gamma < T}{|\zeta'(\rho)|^{-2}} \ll T
\]
on negative discrete moments of $\zeta(s)$, which is defined and discussed in Section \ref{sectionmoments}.

\begin{theorem}\label{themaintheorem}
Assume RH, LI, and $J_{-1}(T) \ll T$. Then for $0 \leq \alpha < 1/2$,
	\[1/2 \leq \delta(P_{\alpha}) < 1.
\]
Moreover,
	\[\lim_{\alpha \to 1/2^{-}} \delta(P_{\alpha}) = 1.
\]
\end{theorem}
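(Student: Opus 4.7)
The plan is to adapt the Rubinstein--Sarnak framework, as Ng did for the summatory function of $\lambda$. First, under RH, I would derive an explicit formula for the normalised sum $\Phi_\alpha(y) := L_\alpha(e^y)/e^{(1/2-\alpha)y}$ by shifting the contour in the Perron integral for $\zeta(2(\alpha+s))/\zeta(\alpha+s)\cdot x^s/s$ past $\Re(s) = 1/2 - \alpha$. The simple pole of $\zeta(2(\alpha+s))$ at $s = 1/2 - \alpha$ contributes a constant bias $c(\alpha) = 1/((1-2\alpha)\zeta(1/2))$, while LI implies that every nontrivial zero $\rho = 1/2 + i\gamma$ of $\zeta(s)$ is simple and contributes $a_\gamma(\alpha) e^{i\gamma y}$ with $a_\gamma(\alpha) = \zeta(1+2i\gamma)/((1/2-\alpha+i\gamma)\zeta'(1/2+i\gamma))$. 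The output is
$$\Phi_\alpha(y) \;=\; c(\alpha) \;+\; \sum_{\gamma > 0} 2\,\Re\!\bigl(a_\gamma(\alpha)\,e^{i\gamma y}\bigr) \;+\; \text{error},$$
with trivial zeros contributing only to the error.

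Second, I would show that $\Phi_\alpha(y)$ admits a limiting logarithmic distribution $\mu_\alpha$ on $\R$. The core of this is a Besicovitch almost-periodic / Kronecker--Weyl argument: LI ensures that the phases $(\gamma y \bmod 2\pi)_{\gamma > 0}$ equidistribute on the infinite-dimensional torus $\T^\infty$ under Haar measure as $y$ varies logarithmically, so every finite truncation of the oscillatory sum has an explicit limiting distribution built from independent uniform phases. Upgrading this to a distributional limit for the whole series requires $\sum_{\gamma > 0} |a_\gamma(\alpha)|^2 < \infty$, which follows by partial summation from $J_{-1}(T) \ll T$, the bound $|\zeta(1+2i\gamma)| \ll \log(2+|\gamma|)$, and the Riemann--von Mangoldt formula. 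Combining this with continuity of $\mu_\alpha$ (no atom at $0$) then identifies $\delta(P_\alpha) = \mu_\alpha((-\infty, 0])$.

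Third, I would extract the two bounds on $\delta(P_\alpha)$ from the structure of $\mu_\alpha$. Decompose $\mu_\alpha$ as the convolution of a point mass at $c(\alpha)$ with the law $\nu_\alpha$ of the oscillatory part. Since the involution $\theta_\gamma \mapsto \theta_\gamma + \pi$ on $\T^\infty$ preserves Haar measure and negates the oscillatory sum, $\nu_\alpha$ is symmetric about $0$; LI and the nonvanishing of every $a_\gamma(\alpha)$ (thanks to $\zeta(1+2i\gamma) \neq 0$) make $\nu_\alpha$ absolutely continuous. As $\zeta(1/2) < 0$ we have $c(\alpha) < 0$ for $0 \le \alpha < 1/2$, so
$$\delta(P_\alpha) \;=\; \nu_\alpha\bigl((-\infty, -c(\alpha)]\bigr) \;\geq\; \nu_\alpha\bigl((-\infty, 0]\bigr) \;=\; 1/2.$$
For $\delta(P_\alpha) < 1$, a characteristic-function computation for $\nu_\alpha$ (an infinite product of Bessel-type factors $J_0(2|a_\gamma(\alpha)|t)$) shows that its density is positive on all of $\R$, so $\nu_\alpha((-c(\alpha),\infty)) > 0$ and hence $\mu_\alpha((0,\infty)) > 0$.

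Finally, for $\lim_{\alpha \to 1/2^-} \delta(P_\alpha) = 1$, observe that $c(\alpha) \to -\infty$ while the variance $\sum_{\gamma > 0} 2|a_\gamma(\alpha)|^2$ is uniformly bounded for $\alpha \in [0,1/2]$ because $|1/2 - \alpha + i\gamma|^{-2} \leq \gamma^{-2}$ and $|\gamma| \geq 14$ for every zero, so the bound $J_{-1}(T) \ll T$ gives a uniform majorant independent of $\alpha$. Chebyshev's inequality then yields
$$1 - \delta(P_\alpha) \;=\; \nu_\alpha\bigl((-c(\alpha), \infty)\bigr) \;\leq\; \frac{\sum_{\gamma > 0} 2|a_\gamma(\alpha)|^2}{c(\alpha)^2} \;\longrightarrow\; 0.$$
The main obstacle is the second step: converting an $L^2$ explicit formula into a genuine logarithmic-density statement about the zero set $\{y : \Phi_\alpha(y) \leq 0\}$ requires controlling truncation errors compatibly with logarithmic averaging, and this is precisely the regime where the hypothesis $J_{-1}(T) \ll T$ — not merely a second-moment average of $|\zeta'(\rho)|^{-2}$ — is essential.
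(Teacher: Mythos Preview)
Your outline follows the same Rubinstein--Sarnak/Ng framework that the paper uses: Perron plus contour shift for the explicit formula with bias $c(\alpha)=1/((1-2\alpha)\zeta(1/2))$, Kronecker--Weyl equidistribution under LI to build the limiting logarithmic distribution, the Bessel-product form of the Fourier transform, symmetry about $c(\alpha)<0$ for the bound $\delta(P_\alpha)\ge 1/2$, and a variance bound uniform in $\alpha$ for the limit $\alpha\to 1/2^{-}$. The paper uses a Chernoff bound rather than Chebyshev for that last step, but it explicitly notes that Chebyshev would suffice, so your choice is fine.

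The one place where your plan diverges from the paper and leaves a real gap is the proof of $\delta(P_\alpha)<1$. You write that ``a characteristic-function computation for $\nu_\alpha$ \ldots\ shows that its density is positive on all of $\R$''. The paper considers exactly this route and finds it blocked: the decay it can extract from the Bessel product is only
\[
|\widehat{\nu_\alpha}(\xi)|\le \beta_1\exp\bigl(-\beta_2|\xi|^{1/(1+\e)}\bigr),
\]
which is enough for $\widehat{\nu_\alpha}\in L^1$ (hence absolute continuity), but \emph{not} enough for a Paley--Wiener argument giving real-analyticity of the density and hence its positivity on open intervals. So the bare characteristic-function computation does not, by itself, yield positivity of the density everywhere. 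The paper instead appeals to Montgomery's probabilistic lower bound on tails: writing $r_{\alpha,\gamma}=2|\zeta(2\rho)|/(|\rho-\alpha|\,|\zeta'(\rho)|)$, one has
\[
\nu_\alpha\Bigl(\Bigl[\mu_\alpha+2\!\!\sum_{0<\gamma<T}\!r_{\alpha,\gamma},\infty\Bigr)\Bigr)
\;\ge\; 2^{-40}\exp\!\Bigl(-100\Bigl(\sum_{0<\gamma<T}r_{\alpha,\gamma}\Bigr)^{2}\Big/\sum_{\gamma>T}r_{\alpha,\gamma}^{2}\Bigr),
\]
and since $\sum_{0<\gamma<T}r_{\alpha,\gamma}\to\infty$ (from the unconditional lower bound $J_{-1/2}(T)\gg T$ together with $|\zeta(1+it)|^{-1}\ll\log\log|t|$), this gives $\nu_\alpha([V,\infty))>0$ for every $V$, hence $\nu_\alpha((0,\infty))>0$ and $\delta(P_\alpha)<1$. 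That tail lower bound is the missing ingredient in your outline; without it, the strict inequality is not established.
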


Thus (conditionally) $L_{\alpha}(x)$ does indeed have a bias towards being negative, and as $\alpha$ tends to $1/2$ from below, this bias becomes stronger. In spite of this, the set of $x \in [1,\infty)$ for which $L_{\alpha}(x)$ is positive has strictly positive logarithmic density. In particular, the set of counterexamples to P\'{o}lya's conjecture has strictly positive logarithmic density.

Theorem \ref{themaintheorem} suggests that $\delta(P_{1/2}) = 1$; that is, that the set of counterexamples to the $\alpha = 1/2$ conjecture has logarithmic density zero. Surprisingly enough, this is somewhat easier to prove than Theorem \ref{themaintheorem}, in the sense that the Linear Independence hypothesis is superfluous.

\begin{theorem}\label{alpha=1/2theorem}
Assume RH and $J_{-1}(T) \ll T$. Then
	\[\delta(P_{1/2}) = 1.
\]
\end{theorem}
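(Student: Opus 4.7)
The plan is to derive an explicit formula for $L_{1/2}(x)$ whose main term is a negative multiple of $\log x$, and then show that the oscillating part is small in a logarithmic $L^2$ sense; Chebyshev's inequality will then force $L_{1/2}(x) < 0$ outside a set of logarithmic density zero. The reason LI is not needed here is that the linearly growing term $-\log x /(2|\zeta(1/2)|)$ dominates any oscillation as soon as the oscillating part is known to have bounded logarithmic mean-square, without our having to identify its limiting distribution precisely. Concretely, I would apply a smoothed Perron formula to $\sum_n \lambda(n)/n^{1/2+s} = \zeta(1+2s)/\zeta(1/2+s)$ (valid for $\Re(s)>1/2$) and, under RH, shift the contour in $\int_{(c)} \frac{\zeta(1+2s)}{\zeta(1/2+s)}\,\frac{x^s}{s}\,ds$ to the left of the imaginary axis. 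The integrand has a double pole at $s=0$ (the Perron factor $1/s$ meets the simple pole of $\zeta(1+2s)$) and, since $J_{-1}(T)\ll T$ forces all nontrivial zeros to be simple, simple poles at $s=i\gamma$ for each zero $\rho=1/2+i\gamma$. A Laurent expansion at $s=0$, together with $\zeta(1/2)<0$, gives residue $\frac{\log x}{2\zeta(1/2)} + C_0 = -\frac{\log x}{2|\zeta(1/2)|} + C_0$, while the residue at $s=i\gamma$ is $\zeta(1+2i\gamma)/(i\gamma\,\zeta'(\rho))\cdot x^{i\gamma}$. Pairing conjugate zeros yields
\[
L_{1/2}(x) = -\frac{\log x}{2|\zeta(1/2)|} + C_0 + \Phi(\log x) + R(x),
\]
where $\Phi(y) := \sum_{\gamma>0} 2\Re\bigl(\zeta(1+2i\gamma)/(i\gamma\,\zeta'(\rho))\cdot e^{i\gamma y}\bigr)$ and $R(x)$ captures the shifted-contour error.

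The second step is to verify that $\Phi$ is well-defined in the Besicovitch $B^2$ sense, with bounded logarithmic $L^2$ norm. Combining the classical bound $|\zeta(1+2i\gamma)|\ll\log\gamma$ with partial summation against $J_{-1}(T)\ll T$ yields
\[
\sum_\gamma \frac{|\zeta(1+2i\gamma)|^2}{\gamma^2\,|\zeta'(\rho)|^2} < \infty,
\]
and a Rubinstein--Sarnak/Ng-style argument using the near-orthogonality of the exponentials $e^{i\gamma y}$ in logarithmic mean then gives
\[
\limsup_{Y\to\infty}\frac{1}{Y}\int_0^Y \Phi(y)^2\,dy \ \ll\ \sum_\gamma \frac{|\zeta(1+2i\gamma)|^2}{\gamma^2\,|\zeta'(\rho)|^2} < \infty.
\]
Crucially, no linear-independence hypothesis on the ordinates $\gamma$ is required for an $L^2$ bound; LI is only used when one wants to identify $\Phi$ as the Fourier series of a particular limiting distribution, which is the heart of Theorem \ref{themaintheorem} but entirely irrelevant here.

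To conclude, observe that $L_{1/2}(e^y)>0$ forces $\Phi(y) > y/(2|\zeta(1/2)|) - O(1)$, so for any $M>0$,
\[
\{y\in[0,Y]:L_{1/2}(e^y)>0\} \subseteq [0,\,2|\zeta(1/2)|\,M + O(1)] \cup \{y\in[0,Y]:|\Phi(y)|>M\}.
\]
Chebyshev's inequality bounds the measure of the second set by $\|\Phi\|_{L^2_{\log}}^2\cdot Y/M^2$, so letting $Y\to\infty$ and then $M\to\infty$ yields $\overline{\delta}(P_{1/2}^c)=0$, i.e.\ $\delta(P_{1/2})=1$. The main obstacle lies in Step 1, namely rigorously controlling the error $R(x)$ on average over $x\in[1,X]$: this will require RH-conditional bounds on $1/\zeta(\sigma+it)$ just below the critical line together with a zero-density estimate for the tail of the sum over $\gamma$, but this is precisely the machinery that must already have been set up in the proof of Theorem \ref{themaintheorem}, so the present theorem should come as essentially a corollary of the explicit formula, skipping the delicate Fourier-analytic step that would otherwise demand LI.
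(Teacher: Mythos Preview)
Your proposal is correct and follows essentially the same route as the paper: derive the explicit formula for $L_{1/2}(x)$ with main term $\log x/(2\zeta(1/2))$, bound the oscillating sum over zeros in logarithmic mean square using the $L^2$ estimate already established for the $0\le\alpha<1/2$ case (the paper's Lemma~\ref{keyboundlemma}), and conclude via Chebyshev that the set where the oscillation beats the $\log x$ term has logarithmic density zero. Your observation that LI is unnecessary because we only need an $L^2$ bound rather than a limiting distribution is exactly the point.

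Two differences are worth noting. First, the paper never works with the full infinite series $\Phi$; it keeps the Perron truncation $\sum_{|\gamma|<T}$ with $T\asymp x$, so that the error $R_{1/2}(x,T)$ is $O(1)$ for $x\in[T,eT]$, and applies the $L^2$ lemma on each dyadic block $[e^k,e^{k+1}]$. This sidesteps the convergence issue you flag in your final paragraph; your version would be made rigorous the same way. Second, the paper proves a quantitatively stronger intermediate statement: it splits the zero-sum at height $(\log T)^4$, bounds the low zeros absolutely by $\ll(\log\log x)^{3/2}\log\log\log x$ via \eqref{J_{-1/2}(T)}, and uses the decaying $L^2$ bound on the high zeros to show that $|L_{1/2}(x)-\log x/(2\zeta(1/2))|\le \beta(\log\log x)^{3/2}\log\log\log x$ outside a set of logarithmic density zero. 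Your fixed-threshold Chebyshev argument with $M\to\infty$ is cleaner and entirely sufficient for $\delta(P_{1/2})=1$, but it does not recover this sharper exceptional-set statement.
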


Note, however, that this result does not eliminate the possibility that $L_{1/2}(x)$ changes sign infinitely often; it merely states that even though this may occur, $L_{1/2}(x)$ is almost always negative.

The methods of proof for these theorems rely heavily on the techniques developed in \cite{Ng}, where Ng proves related results for $M(x) = \sum_{n \leq x}{\mu(n)}$, the summatory function of the M\"{o}bius function. In turn, Ng's method of constructing a limiting logarithmic distribution for $M(x)/\sqrt{x}$ builds on the seminal work of Rubinstein and Sarnak \cite{Rubinstein} on biases in prime number races.

\section{Unconditional and Conditional Estimates on $L_{\alpha}(x)$}
\label{sectionConditional}

We will now mention some of the known results on the behaviour of $L_{\alpha}(x)$ for $0 \leq \alpha \leq 1/2$. In particular, we discuss upper and lower bounds on $L(x)$, and how often it is positive or negative.

For upper bounds, we immediately note the trivial bound
	\[|L_{\alpha}(x)| \leq \sum_{n \leq x}{n^{-\alpha}} \ll x^{1 - \alpha}.
\]
Of course, this can be strengthened significantly. By elementary means --- that is, without appealing to methods of complex analysis --- it is possible to show that $L_{\alpha}(x) = o(x^{1 - \alpha})$; this is equivalent to the Prime Number Theorem. The strongest unconditional estimates are obtained via analytic methods. These involve determining zero-free regions of $\zeta(s)$ to the left of the line $\Re(s) = 1$. The largest zero-free region that has been proven, independently by Korobov \cite{Korobov} and Vinogradov \cite{Vinogradov}, is the region
	\[\{s + it \in \C : \sigma \geq 1 - c (\log \tau)^{-2/3} (\log \log \tau)^{-1/3}\}
\]
for some effective constant $c > 0$. Using this and known bounds on $1/\zeta(s)$ in this region, we may prove the following bounds via standard methods of contour integration.

\begin{theorem}
For each $0 \leq \alpha \leq 1/2$, there exists an effective constant $c = c(\alpha) > 0$ such that
	\[L_{\alpha}(x) = O\left(x^{1 - \alpha} \exp\left(- c \frac{(\log x)^{3/5}}{(\log \log x)^{1/5}}\right)\right).
\]
\end{theorem}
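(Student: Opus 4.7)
The plan is to follow the standard contour-integration argument used for prime-number-theorem-type estimates. Starting from the identity displayed in the paper for $\zeta(2(\alpha+s))/\zeta(\alpha+s)$, Mellin inversion in a truncated Perron form yields, after the substitution $w = \alpha + s$ and the choices $b = 1 + 1/\log x$, $T \geq 2$,
\[
L_\alpha(x) = \frac{1}{2\pi i}\int_{b-iT}^{b+iT} \frac{\zeta(2w)}{(w-\alpha)\zeta(w)}\,x^{w-\alpha}\,dw + E(x,T),
\]
where $E(x,T)$ is the usual Perron truncation error, of size $O(x^{b-\alpha}(\log x)/T)$ up to standard refinements.

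I would then shift the contour to the vertical line $\Re(w) = \sigma_0 := 1 - c_0 (\log T)^{-2/3}(\log\log T)^{-1/3}$, connected at heights $\pm T$ to the original line by horizontal segments. The Korobov--Vinogradov theorem guarantees that $\zeta(w)$ is non-vanishing in this region, while the only other potential poles of the integrand, at $w = 1/2$ (from $\zeta(2w)$) and $w = \alpha$ (from $(w-\alpha)^{-1}$), both lie to the left of the new contour, since $\alpha \leq 1/2 < \sigma_0$ for $T$ sufficiently large. Hence no residue is crossed. On the shifted segment one uses $|\zeta(2w)| \ll 1$ (since $\Re(2w) > 1$), the companion bound $|1/\zeta(w)| \ll (\log(|t|+2))^{2/3}(\log\log(|t|+3))^{1/3}$, and $\int_{-T}^{T} dt/|w-\alpha| \ll \log T$, to obtain the bound
\[
\ll x^{\sigma_0-\alpha}(\log T)^{2/3}(\log\log T)^{1/3}\log T,
\]
and the horizontal connectors contribute a comparable quantity divided by $T$.

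It remains to calibrate the free parameter $T$. Taking $\log T = A(\log x)^{3/5}(\log\log x)^{-1/5}$ for a suitable absolute constant $A > 0$, a short calculation gives $(\log T)^{2/3}(\log\log T)^{1/3} \asymp (\log x)^{2/5}(\log\log x)^{1/5}$, whence
\[
x^{\sigma_0 - \alpha} = x^{1-\alpha}\exp\left(-c_1\,\frac{(\log x)^{3/5}}{(\log\log x)^{1/5}}\right).
\]
The polylogarithmic prefactor, the horizontal contributions, and the Perron error $E(x,T)$ are all absorbed by passing from $c_1$ to a slightly smaller constant $c = c(\alpha) > 0$, giving the claimed bound.

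The argument contains no single deep step; the main obstacle is simply the clean assembly of the pieces, i.e.\ the Korobov--Vinogradov zero-free region together with its associated growth bound on $|1/\zeta(w)|$, bookkeeping of the Perron truncation, and the elementary optimisation of $T$ against $\sigma_0$. Note that no residues are ever actually extracted: the would-be main terms $\zeta(2\alpha)/\zeta(\alpha)$ from $w = \alpha$ and $\sim x^{1/2-\alpha}/\zeta(1/2)$ from $w = 1/2$ are both strictly smaller than the claimed bound, so they are absorbed into it rather than isolated.
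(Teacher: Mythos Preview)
Your proposal is correct and is precisely the ``standard methods of contour integration'' that the paper alludes to; the paper itself does not supply a proof beyond that phrase, so there is nothing to compare against in detail. One minor remark: the constant need not actually depend on $\alpha$, since on the shifted line $\sigma_0 - \alpha \geq \sigma_0 - 1/2$ is bounded below uniformly in $0 \leq \alpha \leq 1/2$, but the statement as written permits the dependence, so this is harmless.
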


Next we consider lower bounds for $L_{\alpha}(x)$. It is quite simple to show that $L_{\alpha}(x)$ must indeed be bounded away from zero. We know that
	\[\frac{\zeta(2(\alpha + s))}{\zeta(\alpha + s)} = s\int^{\infty}_{1}{\frac{L_{\alpha}(x)}{x^s} \: \frac{dx}{x}}
\]
for $\Re(s) > \max\{\sigma_c,0\}$, where $\sigma_c$ is the abscissa of convergence of the Dirichlet series for $\zeta(2(\alpha + s))/\zeta(\alpha + s)$. As $\zeta(2(\alpha + s))$ has a simple pole at $s = 1/2 - \alpha$, while $\zeta(\alpha + s)$ is nonvanishing there, we must have that $\sigma_c \geq 1/2 - \alpha$. In particular, the statement $L_{\alpha}(x) = O(x^{1/2 - \alpha - \e})$ cannot be true for any $\e > 0$, as otherwise $\zeta(2(\alpha + s))/\zeta(\alpha + s)$ would be holomorphic in the open half-plane $\Re(s) > 1/2 -\alpha - \e$. This then tells us that at least one of the two statements
	\[\liminf_{x \to \infty} \frac{L_{\alpha}(x)}{x^{1/2 - \alpha - \e}} = -\infty, \qquad \limsup_{x \to \infty} \frac{L_{\alpha}(x)}{x^{1/2 - \alpha - \e}} = \infty
\]
must be true for every $\e > 0$. If it is the latter that is the case, then of course we will have disproved P\'{o}lya's conjecture.

In certain cases, it is possible to prove a slightly stronger result. Namely, for the case $\alpha = 0$, which corresponds to the classical case of the summatory function of the Liouville function, we may prove that
	\[\liminf_{x \to \infty} \frac{L(x)}{\sqrt{x}}, \qquad \limsup_{x \to \infty} \frac{L(x)}{\sqrt{x}}
\]
are both bounded away from zero. This involves relating $L(x)$ to a certain function $I(x)$, defined in \eqref{I(x)}, that we will study later.

\begin{theorem}[Anderson--Stark {\cite[Theorem 1]{Anderson}}]
For any $x_0 > 1$, we have that
	\[\liminf_{x \to \infty} \frac{L(x)}{\sqrt{x}} \leq \frac{L(x_0) - I(x_0)}{\sqrt{x_0}} \leq \limsup_{x \to \infty} \frac{L(x)}{\sqrt{x}}.
\]
\end{theorem}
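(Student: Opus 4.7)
The plan is to exhibit a sequence $y_k \to \infty$ along which $L(y_k)/\sqrt{y_k}$ converges to $(L(x_0) - I(x_0))/\sqrt{x_0}$; once this is done, both inequalities follow at once from the definitions of $\liminf$ and $\limsup$. The mechanism is almost-periodicity. Assuming RH, Perron's formula applied to $\zeta(2s)/\zeta(s)$ expresses the normalisation $L(x)/\sqrt{x}$ as a constant $1/\zeta(1/2)$ (arising from the simple pole of $\zeta(2s)$ at $s = 1/2$) plus the Bohr-almost-periodic function $\Phi(\log x) = \sum_{\rho} \frac{\zeta(2\rho)}{\rho \zeta'(\rho)} x^{i\gamma}$ built from the non-trivial zeros $\rho = 1/2 + i\gamma$, plus vanishing contributions from the trivial zeros and contour truncation. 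The function $I(x)$ of \eqref{I(x)} will encode (a suitable regularisation of) this oscillating contribution, so that $(L(x_0) - I(x_0))/\sqrt{x_0}$ isolates a specific, point-dependent prediction for $L(x_0)/\sqrt{x_0}$.

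The construction of the sequence $y_k$ proceeds by Kronecker-type simultaneous approximation on a torus. Fix a truncation height $T$ and a tolerance $\e > 0$. The map $y \mapsto (\gamma(\log y - \log x_0) \bmod 2\pi)_{0 < \gamma < T}$ traces, for $y \geq x_0$, a one-parameter subgroup of the finite-dimensional torus $(\R/2\pi\mathbb{Z})^{N(T)}$, where $N(T) = \#\{\gamma : 0 < \gamma < T\}$. Its topological closure is a closed subgroup and hence contains the identity, so arbitrarily large $y$ exist at which each of the indicated phases lies within $\e$ of zero. At such $y$, the truncated zero-sum at $y$ matches that at $x_0$ (after scaling by $1/\sqrt{x}$) up to an error $O_T(\e)$, and combining with the truncated explicit formula gives
\[
\frac{L(y)}{\sqrt{y}} - \frac{L(x_0) - I(x_0)}{\sqrt{x_0}} = O_T(\e) + \text{tail errors at height } T.
\]
A diagonal choice $T(k) \to \infty$, $\e(k) \to 0$ then yields the desired sequence.

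The main obstacle is uniform control of the zero-sum tail past height $T$, since under RH alone this sum is only conditionally convergent, and the tail error must vanish as $T \to \infty$ independently of the choice of $y$. Anderson and Stark presumably define $I(x)$ precisely so that its own tail beyond height $T$, together with the corresponding tail in the explicit formula for $L(x)$, both tend to zero uniformly in $x$; this typically rests on a smoothed Perron formula or a second-moment bound on $\sum_{|\gamma| > T} |\zeta(2\rho)/\rho \zeta'(\rho)|^2$. It is worth emphasising that no Linear Independence hypothesis on the $\gamma$'s is needed: one is aligning only finitely many target phases through a single real shift $\log y - \log x_0$, and the closed-subgroup property of a one-parameter orbit on a compact torus always guarantees accumulation at the identity.
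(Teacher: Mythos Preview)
The paper does not prove this theorem at all; it simply cites \cite[Theorem 1]{Anderson}. So there is no ``paper's proof'' to compare your sketch against, and I can only assess the proposal on its own merits and against what the Anderson--Stark argument actually requires.

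There are two genuine gaps.

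\textbf{First, the result is unconditional, but your sketch is not.} Immediately after this theorem the paper records an unconditional bound on $I(x)$ and combines it with computed values of $L(x_0)$ to obtain the unconditional Theorem of Borwein--Ferguson--Mossinghoff. That deduction only works if the Anderson--Stark inequality itself is unconditional. Your argument, by contrast, opens with ``Assuming RH'' and uses the explicit formula with simple zeroes throughout. (One can of course observe that if RH fails, or if $\zeta$ has a multiple zero, then by Theorems \ref{Riemannfalselowerbounds} and \ref{multzerolowerbounds} both the $\liminf$ and $\limsup$ are infinite and the inequality is vacuous; but you do not say this, and in the remaining case RH~$+$~simple zeroes your sketch still has the second gap below.)

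\textbf{Second, you have misread what $I(x)$ is, and this undermines your proposed tail control.} In the explicit expression \eqref{L(x)alongT_v} the sum over nontrivial zeroes is a \emph{separate} term from $I(x)$; the function $I(x)$ is the contribution of the vertical contour pushed to $\Re(s)=\e<1/2$, and by Lemma~\ref{Fawazlemma} one has $I(x)=1+O(x^{-1/2})$. It is a bounded correction, not ``a regularisation of the oscillating contribution''. Subtracting $I(x_0)$ therefore does not touch the zero-sum tail at all: the quantity $(L(x_0)-I(x_0))/\sqrt{x_0}$ equals $1/\zeta(1/2)$ plus the full (conditionally convergent) sum $\sum_\rho \zeta(2\rho)x_0^{i\gamma}/\rho\zeta'(\rho)$ plus $O(x_0^{-1/2})$. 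Your paragraph beginning ``The main obstacle'' correctly identifies that the crux is uniform control of the tail $\sum_{|\gamma|>T}$, independent of the chosen $y$; but your proposed resolution --- that $I(x)$ absorbs this tail --- is simply false. Under RH alone (or even RH $+$ $J_{-1}(T)\ll T$) the zero-sum is not absolutely convergent, the function $\Phi(\log x)$ is only $B^2$-almost-periodic rather than Bohr-almost-periodic, and pointwise recurrence of the type you invoke is \emph{not} available for such functions. The Dirichlet/Kronecker step works perfectly for any fixed finite truncation, but the passage $T\to\infty$ is exactly where the argument breaks, and nothing in your sketch repairs it.

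The actual Anderson--Stark argument avoids the explicit formula over zeroes entirely. It works directly with the Mellin pair and the contour definition of $I(x)$, using a Landau-type oscillation device to compare $L(x)-I(x)-c\sqrt{x}$ (with $c=(L(x_0)-I(x_0))/\sqrt{x_0}$) against zero; the point is that the Mellin transform of $L(x)-I(x)$ has all its singularities on $\Re(s)=1/2$, and one never needs to sum over them.
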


It is shown unconditionally in  \cite{Anderson} that we have the bound
	\[|I(x) - 1| \leq \frac{2 \sqrt{2} \pi \zeta(3/2)}{\zeta(3) \sqrt{x}},
\]
where
	\[\frac{2 \sqrt{2} \pi \zeta(3/2)}{\zeta(3)} = 6.43700967\ldots.
\]
So if one finds small and large values of $L(x)$ in conjunction with the bound above on $I(x)$, then bounds for $\liminf_{x \to \infty} L(x) /\sqrt{x}$ and $\limsup_{x \to \infty} L(x) /\sqrt{x}$ follow. The former task is more difficult; finding extrema of $L(x)$ turns out to be computationally quite demanding. Nevertheless, work of Borwein, Ferguson, and Mossinghoff \cite{Borwein} has resulted in tabulations of values of $L(x)$ up to and beyond $x = 10^{14}$. Notably, this includes the extremal results
	\[L(72 \, 204 \, 113 \, 780 \, 255) = -11 \, 805 \, 117, \qquad L(351 \, 753 \, 358 \, 289 \, 465) = 1 \, 160 \, 327.
\]
Combining these values with the bounds on $I(x)$ yields the following results.

\begin{theorem}[Borwein--Ferguson--Mossinghoff {\cite[Theorem 2]{Borwein}}]
We have that
\begin{equation}{\label{unconlower}}
\liminf_{x \to \infty} \frac{L(x)}{\sqrt{x}} \leq -1.389278414\ldots, \qquad \limsup_{x \to \infty} \frac{L(x)}{\sqrt{x}} \geq 0.061867262\ldots.
\end{equation}
\end{theorem}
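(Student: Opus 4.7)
The plan is to apply the Anderson--Stark theorem directly, specialised to the two extremal values of $L(x)$ tabulated just before the statement, and to control the auxiliary quantity $I(x)$ via the unconditional bound $|I(x) - 1| \leq 2\sqrt{2}\pi\zeta(3/2)/(\zeta(3)\sqrt{x})$ also quoted immediately prior. The argument is computational rather than conceptual.

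For the first inequality, I would take $x_0 = 72\,204\,113\,780\,255$, where $L(x_0) = -11\,805\,117$. The left half of the Anderson--Stark chain gives
\[
\liminf_{x \to \infty} \frac{L(x)}{\sqrt{x}} \leq \frac{L(x_0) - I(x_0)}{\sqrt{x_0}},
\]
and substituting the lower bound $I(x_0) \geq 1 - 2\sqrt{2}\pi\zeta(3/2)/(\zeta(3)\sqrt{x_0})$ yields an explicit numerical upper bound. Since $\sqrt{x_0} \approx 8.497 \times 10^{6}$, both the main correction $1/\sqrt{x_0}$ and the residual error from the estimate on $|I(x_0) - 1|$ are entirely negligible at the precision quoted in the theorem; the dominant term is $L(x_0)/\sqrt{x_0}$, and careful arithmetic produces the advertised value $-1.389278414\ldots$ after rounding upward.

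For the second inequality, I would take $x_0 = 351\,753\,358\,289\,465$, where $L(x_0) = 1\,160\,327$, and use the right half of the Anderson--Stark chain together with the corresponding upper bound $I(x_0) \leq 1 + 2\sqrt{2}\pi\zeta(3/2)/(\zeta(3)\sqrt{x_0})$ to obtain
\[
\limsup_{x \to \infty} \frac{L(x)}{\sqrt{x}} \geq \frac{L(x_0) - 1 - 2\sqrt{2}\pi\zeta(3/2)/(\zeta(3)\sqrt{x_0})}{\sqrt{x_0}}.
\]
Here $\sqrt{x_0} \approx 1.875 \times 10^{7}$, the same negligibility argument applies, and rounding downward gives $0.061867262\ldots$.

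The only genuine obstacle in the proof lies not in the analysis but in producing the extremal values of $L$ at those two specific arguments. Borwein, Ferguson, and Mossinghoff \cite{Borwein} obtain them by tabulating $L(x)$ for $x$ up to and beyond $10^{14}$, which requires efficient sieve-based evaluation of $\lambda(n)$ over short intervals together with a targeted search strategy to localise candidate extrema. Once those two values are accepted as inputs, the proof sketched above is completely elementary, amounting to an explicit arithmetic verification with several digits of safety margin built in.
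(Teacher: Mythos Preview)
Your proposal is correct and follows exactly the approach the paper outlines: plug the two extremal values of $L(x)$ from \cite{Borwein} into the Anderson--Stark inequalities and control $I(x_0)$ with the quoted bound $|I(x)-1|\le 2\sqrt{2}\pi\zeta(3/2)/(\zeta(3)\sqrt{x})$. Your handling of which side of the $I(x_0)$ bound is needed for each inequality is also correct, and the paper adds nothing beyond this.
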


It is worth noting that if extrema for $L_{\alpha}(x)$, $0 < \alpha < 1/2$, are determined, then similar results hold for $L_{\alpha}(x)/x^{1/2 - \alpha}$. For $\alpha = 1/2$, however, we expect that no positive extrema past $x = 17$ occur, and that this extremum does not prove that $\limsup_{x \to \infty} L_{1/2}(x) > 0$ as the related function $I_{1/2}(x)$ is too large at $x = 17$.

We now consider determining upper and lower bounds for $L(x)$ under the assumption of certain unproven conjectures. In Section \ref{sectionPolya}, we noted that the bound
	\[L(x) = O(x^{1/2 + \e})
\]
for every $\e > 0$ is equivalent to the Riemann hypothesis; a modification of this argument shows that the same is true for
	\[L_{\alpha}(x) = O(x^{1/2 - \alpha + \e})
\]
for $0 \leq \alpha \leq 1/2$. We now show that this can be strengthened slightly via work of Soundararajan, and subsequently improved by Balazard and de Roton, on a related problem.

\begin{theorem}[Soundararajan {\cite{Soundararajan}}, Balazard--De Roton {\cite{Balazard}}]
Assume RH. Then for every $\e > 0$, we have the estimate
\begin{equation}{\label{soundmobius}}
M(x) = O\left(\sqrt{x}e^{(\log x)^{1/2} (\log \log x)^{5/2 + \e}}\right),
\end{equation}
where $M(x) = \sum_{n \leq x}{\mu(n)}$ is the summatory function of the M\"{o}bius function.
\end{theorem}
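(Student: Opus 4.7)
The plan is to combine a smoothed Perron-type inversion formula for $M(x)$ with sharp distributional bounds on $|\zeta(1/2+it)|^{-1}$ that are available under RH. The starting point is the identity $1/\zeta(s) = \sum_{n=1}^{\infty} \mu(n)n^{-s}$ for $\Re(s) > 1$. Applying Mellin inversion with a smooth cut-off kernel $K(s)$ yields
$$\widetilde{M}(x) = \frac{1}{2\pi i}\int_{c-iT}^{c+iT} \frac{x^s}{s\zeta(s)} K(s)\, ds + E(x,T),$$
with $c = 1 + 1/\log x$, where $\widetilde{M}(x)$ is a weighted sum that differs from $M(x)$ by an acceptable error and $E(x,T)$ is the standard Perron truncation error.

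The next step is to shift the contour from $\Re(s) = c$ down to the line $\Re(s) = 1/2 + 1/\log x$. Under RH, $1/\zeta(s)$ is holomorphic in the half-plane $\Re(s) > 1/2$, so the shift encounters no residues; the horizontal segments contribute negligibly provided $T$ is chosen as a suitable power of $x$ and one invokes the standard Titchmarsh-style bound $\log|\zeta(\sigma+it)|^{-1} \ll \log t/\log\log t$ valid under RH. What remains is then controlled by $\sqrt{x}$ times the vertical integral
$$\int_{-T}^{T}\frac{|\zeta(1/2 + 1/\log x + it)|^{-1}}{1+|t|}\,\bigl|K(1/2+it)\bigr|\, dt.$$

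The heart of the proof, and the main obstacle, is a sufficiently sharp bound on this integral. Soundararajan's method uses the hybrid Euler-Hadamard product of Gonek-Hughes-Keating to decompose $\log|\zeta(1/2+it)|^{-1}$ into a short Dirichlet sum over primes plus a contribution from zeros close to $1/2+it$. Under RH one controls the zero contribution, and Selberg-type moment estimates for the prime piece then yield a large-deviation bound of Gaussian shape,
$$\operatorname{meas}\bigl\{t\in[0,T] : \log|\zeta(1/2+it)|^{-1}>V\bigr\} \ll T\exp\!\left(-\frac{V^2}{\log\log T}\right)$$
modulo lower-order terms, reflecting the fact that $\log|\zeta(1/2+it)|^{-1}$ is approximately normally distributed with variance $\tfrac{1}{2}\log\log T$. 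Integrating this distributional bound against $dt/(1+|t|)$, with the threshold $V$ optimized to balance measure against size, produces an estimate of the form $\exp\!\bigl(C(\log T)^{1/2}(\log\log T)^{c}\bigr)$. The Balazard-De Roton refinement consists in more careful control of the lower-order corrections in the Gaussian tail, and an improved choice of the length of the zero sum, which together drive the exponent $c$ on $\log\log T$ down to $5/2+\e$.

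Finally, one chooses $T$ to be a small power of $x$ (say $T = x^{1/2}$) and assembles the contributions: the Perron truncation error, the horizontal-segment error, and the vertical integral all fit under $\sqrt{x}\exp\!\bigl((\log x)^{1/2}(\log\log x)^{5/2+\e}\bigr)$ after optimization. The non-routine ingredient throughout is the sharp distributional control on $\log|\zeta(1/2+it)|^{-1}$; the Perron-and-shift framework is entirely classical.
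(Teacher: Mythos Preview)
The paper does not prove this theorem; it is quoted as an external result with citations to Soundararajan and to Balazard--De Roton, and is then used as a black box to derive the corollary on $L_{\alpha}(x)$ that immediately follows. There is therefore no ``paper's own proof'' to compare your sketch against.

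That said, your outline is a reasonable high-level account of how the original proofs actually run: Perron-type inversion, contour shift under RH to just right of the critical line, and the decisive ingredient being distributional control on $\log|\zeta(1/2+it)|^{-1}$. One small correction of emphasis: Soundararajan's argument does not invoke the Gonek--Hughes--Keating hybrid Euler--Hadamard product as such; he proves directly (via the Hadamard factorisation and an explicit-formula identity) an upper bound for $-\log|\zeta(\sigma+it)|$ in terms of a short Dirichlet polynomial over primes, and then bounds the measure of the set where this polynomial is large via moment estimates. The Gaussian tail you write is the correct heuristic shape, though the actual inequality carries additional logarithmic corrections, and the Balazard--De Roton refinement lies chiefly in sharpening the large-values analysis of that Dirichlet polynomial rather than in the zero-sum length. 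Since the present paper simply imports the result, none of this detail is required here.
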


From Soundararajan's bound for $M(x)$, it is simple to determine a similar estimate for $L(x)$.

\begin{corollary}
Assume RH. For each $0 \leq \alpha \leq 1/2$ and for each $\e > 0$, there exists an absolute constant $c = c(\alpha) > 0$ such that
	\[L(x) = O\left(\sqrt{x} e^{c(\log x)^{1/2} (\log \log x)^{5/2 + \e}}\right).
\]
\end{corollary}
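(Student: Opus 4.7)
The plan is to reduce the corollary to Soundararajan's bound on $M(x)$ via the convolution identity $\lambda = 1_{\square} * \mu$, that is,
\[
\lambda(n) = \sum_{d^2 \mid n} \mu(n/d^2),
\]
which follows immediately by comparing Euler products, since $\zeta(2s)/\zeta(s) = \zeta(2s) \cdot (1/\zeta(s))$. Grouping according to $n = d^2 m$ and interchanging the order of summation gives
\[
L(x) = \sum_{d \leq \sqrt{x}} M\!\left(\frac{x}{d^2}\right).
\]
So any good upper bound on $M$ translates directly into a good upper bound on $L$.

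Inserting the estimate \eqref{soundmobius} term by term, and using that $\log(x/d^2) \leq \log x$ and $\log\log(x/d^2) \leq \log\log x$ for $d \geq 1$, I would bound
\[
|L(x)| \ll \sum_{d \leq \sqrt{x}} \frac{\sqrt{x}}{d}\, \exp\!\left((\log x)^{1/2}(\log\log x)^{5/2+\e}\right) \ll \sqrt{x}\,(\log x)\,\exp\!\left((\log x)^{1/2}(\log\log x)^{5/2+\e}\right).
\]
The stray $\log x$ is harmless: for any fixed $\e' > \e$ it is absorbed into the exponential, since $\log x = o\bigl(\exp((\log x)^{1/2}(\log\log x)^{5/2+\e'})\bigr)$. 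This yields the claimed estimate in the $\alpha = 0$ case (equivalently, after adjusting the constant $c$). For the weighted version with $0 < \alpha \leq 1/2$ — which is presumably the intended statement, with $\sqrt{x}$ replaced by $x^{1/2 - \alpha}$ — I would then pass from $L(x)$ to $L_{\alpha}(x)$ by partial summation,
\[
L_{\alpha}(x) = \frac{L(x)}{x^{\alpha}} + \alpha \int_1^x \frac{L(t)}{t^{\alpha + 1}}\, dt,
\]
and substitute the bound just obtained. For $0 \leq \alpha < 1/2$ the integrand is $\ll t^{-1/2-\alpha}$ times a slowly growing factor, and the integral is controlled by its value at the upper endpoint, producing the target $x^{1/2 - \alpha}$ growth. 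For $\alpha = 1/2$ the integral becomes $\int_1^x t^{-1} \exp((\log t)^{1/2}(\log\log t)^{5/2+\e})\, dt$, which picks up an additional $\log x$ factor, again absorbed into the exponential by mildly enlarging $\e$ or $c$.

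The main ``obstacle'' is therefore not analytic at all — it is simply the bookkeeping needed to verify that each logarithmic loss (one from summing over $d \leq \sqrt{x}$, and, in the weighted case, one from the integral) is swallowed by an arbitrarily small upward adjustment of $\e$ (or, equivalently, of the constant $c(\alpha)$). No input beyond the convolution identity, Soundararajan's theorem, and elementary partial summation is required.
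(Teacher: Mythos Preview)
Your proposal is correct and follows exactly the paper's route: the convolution identity $L(x) = \sum_{d \leq \sqrt{x}} M(x/d^2)$, Soundararajan's bound applied termwise, and absorption of the resulting $\log x$ factor into the exponential by enlarging $\e$ (or $c$). The paper's own proof stops at $\alpha = 0$ --- the appearance of $\alpha$ in the statement is vestigial --- so your additional partial-summation passage to $L_{\alpha}(x)$ actually goes slightly beyond what the paper writes out.
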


\begin{proof}
We recall the identity
	\[\lambda(n) = \sum_{d^2 \mid n}{\mu\left(\frac{n}{d^2}\right)},
\]
and hence that
	\[L(x) = \sum_{d^2 \leq x}{M\left(\frac{x}{d^2}\right)}.
\]
Combining this with \eqref{soundmobius}, we obtain
	\[L(x) \ll \sum_{d \leq \sqrt{x}}{\frac{\sqrt{x}}{d}e^{(\log x/d^2)^{1/2} (\log \log x/d^2)^{5/2 + \e}}} \ll \sqrt{x} e^{(\log x)^{1/2} (\log \log x)^{5/2 + \e}} \sum_{d \leq \sqrt{x}}{\frac{1}{d}},
\]
and this last sum is $\ll \log x$. This yields the result with $c = 1 + \e'$ for any $\e' > 0$.
\end{proof}

For lower bounds, we cannot determine anything new under the assumption of the Riemann hypothesis; indeed, P\'{o}lya's conjecture, namely the false statement that $L(x) \leq 0$ for all $x \geq 2$, implies the Riemann hypothesis. If the Riemann hypothesis is false, on the other hand, we can easily prove strong lower bounds.

\begin{theorem}[cf.\ {\cite[Theorem 15.2]{Montgomery}}]\label{Riemannfalselowerbounds}
Suppose that RH is false, so that $\Theta = \sup\{\Re(\rho) : \zeta(\rho) = 0\} > 1/2$. Then for $0 \leq \alpha \leq 1/2$,
	\[\liminf_{x \to \infty} \frac{L_{\alpha}(x)}{x^{\Theta - \alpha - \e}} = - \infty, \qquad \limsup_{x \to \infty} \frac{L_{\alpha}(x)}{x^{\Theta - \alpha - \e}} = \infty
\]
for every $\e > 0$.
\end{theorem}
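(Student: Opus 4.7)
The plan is a proof by contradiction using Lemma \ref{Landau}, following the classical argument of \cite[Theorem 15.2]{Montgomery}. Both conclusions are monotone in $\e$: increasing $\e$ only makes $\limsup L_\alpha(x)/x^{\Theta-\alpha-\e}$ larger and $\liminf L_\alpha(x)/x^{\Theta-\alpha-\e}$ smaller, so it suffices to treat the range $0 < \e < \Theta - 1/2$, which conveniently ensures the numerology below.

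For the $\limsup$ claim, I would suppose for contradiction that $L_\alpha(x) \leq C x^{\Theta - \alpha - \e}$ for some constant $C$ and all $x \geq x_0$. Then $F(x) := C x^{\Theta - \alpha - \e} - L_\alpha(x)$ is nonnegative on $[x_0, \infty)$, and using the Mellin identity recalled earlier in the excerpt one computes, for $\Re(s)$ sufficiently large,
\[
G(s) := s \int_1^\infty F(x) x^{-s-1}\,dx = \frac{Cs}{s - (\Theta - \alpha - \e)} - \frac{\zeta(2(\alpha + s))}{\zeta(\alpha + s)},
\]
which continues meromorphically to all of $\C$. Its singularities in the half-plane $\Re(s) > 1/2 - \alpha$ consist of the simple pole at $s = \Theta - \alpha - \e$ and simple poles at $s = \rho - \alpha$ for each nontrivial zero $\rho$ of $\zeta$ with $\zeta(2\rho) \neq 0$.

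Lemma \ref{Landau} then supplies a real singularity of $G$ at its abscissa of convergence $\sigma_c$, with $G$ holomorphic on $\Re(s) > \sigma_c$. Since $\zeta$ is nonvanishing on the positive real axis, the only real singularity of $G$ strictly to the right of $1/2 - \alpha$ is $\Theta - \alpha - \e$ itself, forcing $\sigma_c = \Theta - \alpha - \e$. However, by definition of $\Theta$ one may choose a nontrivial zero $\rho$ with $\Re(\rho) > \Theta - \e/2$; the restriction $\e < \Theta - 1/2$ then guarantees $\Re(\rho) > 1/2$, so $\Re(2\rho) > 1$ and hence $\zeta(2\rho) \neq 0$. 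Therefore $s = \rho - \alpha$ is a genuine pole of $G$ with real part strictly exceeding $\sigma_c$, contradicting the holomorphy of $G$ on $\Re(s) > \sigma_c$.

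The $\liminf$ claim follows by the symmetric argument applied to $F(x) := L_\alpha(x) + C x^{\Theta - \alpha - \e} \geq 0$. I anticipate no significant technical obstacle; the only point of care is the noncancellation of the pole at $s = \rho - \alpha$ by a zero of $\zeta(2(\alpha + s))$, which is precisely why the restriction $\e < \Theta - 1/2$ (ensuring $\Re(\rho) > 1/2$) is imposed at the outset.
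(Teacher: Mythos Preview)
Your proof is correct and follows the classical Landau argument, which is precisely the method of \cite[Theorem 15.2]{Montgomery} that the paper cites in lieu of a proof; the paper itself omits the argument. One trivial quibble: the poles of $G$ at $s=\rho-\alpha$ need not be \emph{simple} if $\rho$ is a multiple zero of $\zeta$, but this has no bearing on the contradiction you derive.
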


In particular, the falsity of the Riemann hypothesis implies the falsity of P\'{o}lya's conjecture and the truth of the $0 < \alpha < 1/2$ conjecture; however, it does imply the falsity of the $\alpha = 1/2$ conjecture. A similar argument can also be used should the Riemann hypothesis hold but $\zeta(s)$ have a zero of order greater than one. Note that this argument only holds for $0 \leq \alpha < 1/2$.

\begin{theorem}[cf.{} {\cite[Theorem 15.3]{Montgomery}}]\label{multzerolowerbounds}
Assume RH and that $\zeta(s)$ has a zero $\rho = 1/2 + i \gamma$ of order $m \geq 2$. Then for $0 \leq \alpha < 1/2$,
	\[\liminf_{x \to \infty} \frac{L_{\alpha}(x)}{x^{1/2 - \alpha} (\log x)^{m - 1}} < 0, \qquad \limsup_{x \to \infty} \frac{L_{\alpha}(x)}{x^{1/2 - \alpha} (\log x)^{m - 1}} > 0.
\]
\end{theorem}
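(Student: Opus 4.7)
My plan is to adapt the Landau-style contradiction of \cite[Theorem 15.3]{Montgomery}: I will show that the hypothesized multiple zero forces the Mellin transform of $L_\alpha$ to have a pole of order $m$ at a point off the real axis, with a nonzero residue that is independent of any admissible perturbation, and that this cannot be reconciled with a one-sided bound of the form $L_\alpha(x) \geq -\e x^{1/2-\alpha}(\log x)^{m-1}$ by means of Landau's lemma. I will treat only the $\liminf$ statement; the $\limsup$ will follow by applying the same argument to $-L_\alpha(x)$. I also restrict to the generic case $\gamma \neq 0$, the real-zero case requiring only that the auxiliary term below be replaced by one matching the higher-order real pole.

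Assuming toward a contradiction that $\liminf_{x\to\infty} L_\alpha(x)/(x^{1/2-\alpha}(\log x)^{m-1}) \geq 0$, for each $\e > 0$ there exists $x_0 = x_0(\e)$ such that $F_\e(x) := L_\alpha(x) + \e x^{1/2-\alpha}(\log x)^{m-1} \geq 0$ for $x \geq x_0$. A direct substitution $u = \log x$ evaluates the Mellin transform of $x^{1/2-\alpha}(\log x)^{m-1}$, and combining this with the $\alpha$-shifted analogue of \eqref{lambdaext} gives
\[
G_\e(s) := s\int_1^\infty F_\e(x) x^{-s-1}\,dx = \frac{\zeta(2(\alpha+s))}{\zeta(\alpha+s)} + \frac{\e s(m-1)!}{(s-(1/2-\alpha))^m}
\]
for $\Re(s) > 1/2 - \alpha$, the half-plane of convergence being secured by the RH-conditional bound $L_\alpha(x) = O(x^{1/2-\alpha+\delta})$ noted in Section \ref{sectionPolya}. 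Since $\gamma \neq 0$, the factor $\zeta(2(\alpha+s))$ is finite and nonzero at $s_0 := 1/2-\alpha+i\gamma$, while $\zeta(\alpha+s)$ has a zero of exact order $m$ there, so $G_\e$ has a pole of order exactly $m$ at $s_0$ with leading Laurent coefficient $c_{-m} = m!\,\zeta(1+2i\gamma)/\zeta^{(m)}(1/2+i\gamma) \neq 0$, independent of $\e$; on the real axis one has instead $G_\e(\sigma) \sim \e(1/2-\alpha)(m-1)!/(\sigma-(1/2-\alpha))^m$ as $\sigma \to (1/2-\alpha)^+$.

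To finish, I will split off the contribution of $F_\e$ on $[1,x_0]$, which is an entire function of $s$ bounded on any compact set. Writing $H_\e(s) := G_\e(s)/s$, the nonnegativity of $F_\e$ on $[x_0,\infty)$ together with the triangle inequality gives $|H_\e(\sigma+i\gamma)| \leq H_\e(\sigma) + O_\e(1)$ uniformly on bounded sets in $\sigma$. Taking $\sigma \to (1/2-\alpha)^+$ and comparing leading Laurent behaviour on both sides yields $|c_{-m}|/|s_0| \leq \e(m-1)!$, and since $\e$ is arbitrary this forces $c_{-m} = 0$, contradicting the computation above. The main delicacy lies in this last step: I must carefully identify the leading Laurent coefficients on both sides of the critical line with matching orders in $\sigma - (1/2-\alpha)$, and ensure that the bounded contribution from the $[1,x_0]$ piece does not interfere with the comparison of poles of order $m$ as $\sigma \to (1/2-\alpha)^+$.
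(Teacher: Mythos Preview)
Your argument is correct and is exactly the Landau-type contradiction that the paper implicitly invokes via its ``cf.\ \cite[Theorem 15.3]{Montgomery}'' citation; the paper does not supply its own proof, and you have carried out the expected adaptation. Two minor remarks: the case $\gamma = 0$ need not be treated separately since $\zeta(1/2) \neq 0$, so any nontrivial zero automatically has $\gamma \neq 0$; and in your comparison of leading Laurent coefficients it is worth noting explicitly that the simple pole of $\zeta(2(\alpha+s))/\zeta(\alpha+s)$ at the real point $s = 1/2-\alpha$ (coming from the pole of $\zeta$ at $1$) is dominated by the order-$m$ pole of the $\e$-term precisely because $m \geq 2$, which is what makes the right-hand side of your final inequality proportional to $\e$ rather than bounded below by a fixed positive constant.
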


As an immediate corollary, we see that P\'{o}lya's conjecture implies the simplicity of the zeroes of the Riemann zeta function.

For $\alpha = 1/2$, the same method of proof shows that
	\[\liminf_{x \to \infty} \frac{L_{1/2}(x)}{(\log x)^{m - 1}} < 0, \qquad \limsup_{x \to \infty} \frac{L_{1/2}(x)}{(\log x)^{m - 1}} > 0
\]
if $\zeta(s)$ has a zero of order $m \geq 3$. Thus the $\alpha = 1/2$ conjecture can only show that $\zeta(s)$ has zeroes of order at most $2$.

To strengthen the unconditional lower bounds \eqref{unconlower} under the assumption of the Riemann hypothesis and the simplicity of the zeroes of $\zeta(s)$, we must also assume separate hypotheses. As we discussed in Section \ref{sectionPolya}, such lower bounds can be attained assuming the Riemann hypothesis and the Linear Independence hypothesis, which states that for positive imaginary parts $\gamma_1,\ldots,\gamma_n$ of nontrivial zeroes of $\zeta(s)$ and rational constants $c_1,\ldots,c_n$, the equation
	\[c_1 \gamma_1 + \cdots + c_n \gamma_n = 0
\]
has only the solution $c_1 = \ldots = c_n = 0$. A limited scope of numerical computations performed by Bateman et al.\ \cite{Bateman} have failed to determine any linear relations amongst the imaginary parts of nontrivial zeroes of the Riemann zeta function; as it is known that there exist infinitely many zeroes of $\zeta(s)$ along the line $\Re(s) = 1/2$, however, these computations are of little use as evidence for the truth of this conjecture. Theoretical evidence for this conjecture is also somewhat lacking, though conversely there is no known mathematical argument that would suggest the existence of a nontrivial relation connecting the imaginary parts of the zeroes of $\zeta(s)$. Nevertheless, some related results are known; it has been shown that for any $\beta \neq 0$ and for every $\e > 0$, the number of positive integers $k$ lying between $1$ and $T$ such that $\zeta(1/2 + k i \beta) \neq 0$ is at least as large as $T^{5/6 - \e}$ for infinitely many values of $T$ with $T$ tending to infinity \cite[Corollary 9.8]{Lapidus}, which suggests that zeroes of $\zeta(s)$ cannot lie too densely in an arithmetic progression. Note that assuming the Riemann hypothesis, one could replace $5/6$ by $1$. Moreover, Martin and Ng \cite{Martin2}, \cite{Martin3} claim to have extended this result to show that for any $\alpha, \beta \in \R$ with $\beta \neq 0$, there exists a positive constant $c > 0$ such that the number of positive integers $k$ lying between $1$ and $T$ satisfying $\zeta(1/2 + i(\alpha + k \beta)) \neq 0$ is at least as large as $cT/\log T$. Despite these suggestive results, a proof of the Linear Independence hypothesis currently seems very much inaccessible.

The importance of this hypothesis lies in applications of Kronecker's theorem, which states that if $t_1,\ldots,t_n$ are linearly independent over the rationals, then the set
	\[\left\{\left(e^{2\pi i t_1 y}, \ldots, e^{2\pi i t_n y}\right) \in \T^n : y \in \R\right\}
\]
is dense in $\T^n$, where
	\[\T^n = \left\{(z_1,\ldots,z_n) \in \C^n : |z_l| = 1 \text{ for all $1 \leq l \leq n$}\right\}
\]
is the $n$-torus. We will use a variant of this result in Section \ref{sectionlimitingdistribution}, and highlight the connection between the Linear Independence hypothesis and Kronecker's theorem with regards to applications in analytic number theory in Section \ref{sectionFormula}. Firstly, however, we note the following result of Ingham, subsequently extended by Mossinghoff in Trudgian in \cite{Mossinghoff}, which relies crucially on Kronecker's theorem.

\begin{theorem}[Ingham {\cite[Theorem A]{Ingham}}, Mossinghoff--Trudgian {\cite[\S 3]{Mossinghoff}}]\label{Inghamthm}
Assume RH and LI. Then we have that
	\[\liminf_{x \to \infty} \frac{L_{\alpha}(x)}{x^{1/2 - \alpha}} = - \infty, \qquad \limsup_{x \to \infty} \frac{L_{\alpha}(x)}{x^{1/2 - \alpha}} = \infty.
\]
\end{theorem}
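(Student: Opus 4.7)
The plan is to combine a truncated explicit formula for $L_\alpha(x)/x^{1/2-\alpha}$ with Kronecker's theorem, following Ingham's original strategy as adapted to weighted sums by Mossinghoff and Trudgian. First I would start from the Mellin identity
\[
\frac{\zeta(2(\alpha+s))}{\zeta(\alpha+s)} = s \int_1^{\infty} \frac{L_\alpha(x)}{x^s} \frac{dx}{x} \qquad (\Re(s) > 1 - \alpha),
\]
apply a smoothed Perron inversion, and shift the contour leftward past the line $\Re(s) = 1/2 - \alpha$. Under RH every nontrivial zero of $\zeta(\alpha+s)$ lies on this line, and LI forces the positive ordinates to be pairwise distinct, hence the zeros simple. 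Collecting the residue of $\zeta(2(\alpha+s))$ at $s = 1/2-\alpha$ together with the residues at each simple pole $s = \rho - \alpha$ yields the truncated explicit formula
\[
\frac{L_\alpha(x)}{x^{1/2-\alpha}} = C_\alpha + \sum_{0 < |\gamma| \le T} \frac{\zeta(2\rho)}{\zeta'(\rho)(\rho - \alpha)} x^{i\gamma} + E_\alpha(x,T),
\]
where $C_\alpha$ is explicit in terms of $\zeta(1/2-\alpha)$ and $E_\alpha(x,T)$ is a standard Perron remainder.

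Next I would pair each zero $\rho = 1/2 + i\gamma$ with its conjugate to rewrite the oscillatory sum as $\sum_{0 < \gamma \le T} 2|c_\gamma| \cos(\gamma \log x + \arg c_\gamma)$, where $c_\gamma = \zeta(2\rho)/[\zeta'(\rho)(\rho - \alpha)]$. Because LI asserts the $\Q$-linear independence of the $\gamma_k$, Kronecker's theorem produces, for any prescribed targets and any $\e > 0$, arbitrarily large $y = \log x$ at which every cosine lies simultaneously within $\e$ of $+1$ or simultaneously within $\e$ of $-1$. At such $y$ the truncated sum is approximately $\pm 2 \sum_{0 < \gamma \le T} |c_\gamma|$, with sign chosen by the alignment target; this yields both the $\limsup$ and the $\liminf$ assertion in one stroke.

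The remaining task, which I expect to be the main obstacle, is to show that $\sum_{0 < \gamma \le T} |c_\gamma| \to \infty$ as $T \to \infty$ and to reconcile this growth with the error term $E_\alpha(x,T)$. Since $|c_\gamma| \asymp |\zeta(1 + 2i\gamma)| / (\gamma \, |\zeta'(\rho)|)$ and $|\zeta(1 + 2i\gamma)|$ is bounded below by a slowly decaying function of $\gamma$ outside a sparse exceptional set, divergence holds once any unconditional mean control on $|\zeta'(\rho)|^{-1}$ is in hand; crucially, the full negative-moment bound $J_{-1}(T) \ll T$ is \emph{not} needed, because Ingham's argument can be carried out with a Fej\'er-type smoothing $(1 - |\gamma|/T)$ that simultaneously damps the Perron error and preserves divergence of the weighted coefficient sum. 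The delicate step is ensuring that Kronecker's qualitative output of ``some arbitrarily large $y$'' actually falls in a range where $E_\alpha(x,T)$ is still negligible, which is handled by invoking the quantitative Weyl form of Kronecker's theorem to locate such $y$ at controllable heights. Together, these ingredients force $\limsup_{x \to \infty} L_\alpha(x)/x^{1/2-\alpha} = +\infty$ and $\liminf_{x \to \infty} L_\alpha(x)/x^{1/2-\alpha} = -\infty$, as required.
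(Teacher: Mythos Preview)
The paper does not itself prove this theorem; it is quoted from Ingham and Mossinghoff--Trudgian with no argument given. Your sketch has the right ingredients (explicit formula, Kronecker under LI, Fej\'er weights, divergence of $\sum|c_\gamma|$), but the last paragraph misconstrues how the smoothing enters, and the proposed fallback via quantitative Kronecker would not work.

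In Ingham's argument one does not insert Fej\'er weights into the truncated explicit formula and then wrestle with the surviving Perron remainder $E_\alpha(x,T)$. Rather, one argues by contradiction: assuming $\limsup_y F(y) = M < \infty$ with $F(y) = L_\alpha(e^y)e^{-(1/2-\alpha)y}$, one forms a suitable nonnegative average $F_T^*$ of $F$ \emph{itself}. Because the averaging kernel is nonnegative with total mass one, $\limsup_y F_T^*(y) \leq M$ is immediate; and because the kernel is chosen so that its Fourier transform is $(1-|\xi|/T)^+$, one shows directly that $F_T^*(y)$ equals $1/((1-2\alpha)\zeta(1/2))$ plus the \emph{finite} sum $\sum_{|\gamma|<T}(1-|\gamma|/T)c_\gamma e^{i\gamma y}$ plus $o(1)$ as $y\to\infty$, for each \emph{fixed} $T$. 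Ordinary qualitative Kronecker then pins the $\limsup$ of this finite trigonometric polynomial at $2\sum_{0<\gamma<T}(1-\gamma/T)|c_\gamma|$ plus the constant, and sending $T\to\infty$ contradicts $M<\infty$. The averaging has decoupled $T$ from $x$: there is no Perron error left, hence no need to locate Kronecker's good $y$ at any particular height. Your alternative --- keep $E_\alpha(x,T)$ and use quantitative Kronecker --- cannot close: the Perron error needs $T$ to be a power of $x = e^y$, but Weyl's quantitative bound with $N(T)\asymp T\log T$ frequencies only guarantees good $y$ below something exponential in $N(T)$, and these constraints are incompatible. (Minor slip: the constant is $C_\alpha = 1/((1-2\alpha)\zeta(1/2))$, coming from the pole of $\zeta(2s)$ at $s=1/2$; it does not involve $\zeta(1/2-\alpha)$.)
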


Ingham's theorem was proved before a counterexample of P\'{o}lya's conjecture was discovered, and provided theoretical evidence to the falsity of this conjecture. Moreover, the method of proof of Ingham's theorem was instrumental in Haselgrove's disproof of P\'{o}lya's conjecture \cite{Haselgrove}.

\section{Moments of the Riemann Zeta Function}
\label{sectionmoments}

In Section \ref{sectionExpression}, we will determine an explicit expression for $L_{\alpha}(x)$ involving a sum of the form
	\[\sum_{\rho}{\frac{\zeta(2\rho)}{\zeta'(\rho)} \frac{x^{\rho - \alpha}}{\rho - \alpha}},
\]
where the sum is over the nontrivial zeroes $\rho$ of $\zeta(s)$. Assuming the Riemann hypothesis, we have that $|x^{\rho - \alpha}| = x^{1/2 - \alpha}$. It is therefore of importance to know bounds on sums of the form
	\[\sum_{|\gamma| < T}{\frac{|\zeta(2\rho)|}{|\rho - \alpha| |\zeta'(\rho)|}},
\]
where $T$ is large, and the sum is over all zeroes $\rho = 1/2 + i \gamma$ with $|\gamma| < T$. Such bounds, however, turn out to be related to highly challenging open problems. Nevertheless, we can make progress based on knowledge we have on the density of the zeroes of the Riemann zeta function in the strip $0 < \Re(s) < 1$.

\begin{theorem}[{\cite[Corollary 14.3]{Montgomery}}]
For $T \geq 4$, let $N(T)$ denote the number of zeroes $\rho = \beta + i \gamma$ of $\zeta(s)$ in the rectangle $0 < \beta < 1$, $0 < \gamma < T$. Then
\begin{equation}{\label{N(T)}}
N(T) = \frac{1}{2\pi} T \log T - \frac{1}{2\pi}(1 + \log 2\pi) T + O(\log T).
\end{equation}
In particular,
\begin{equation}{\label{N(T+1)}}
N(T+1) - N(T) \ll \log T.
\end{equation}
\end{theorem}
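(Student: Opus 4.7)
The plan is to establish the Riemann--von Mangoldt formula via the argument principle applied to the completed zeta function $\xi(s) = \frac{1}{2} s(s-1) \pi^{-s/2} \Gamma(s/2) \zeta(s)$, which is entire, satisfies the functional equation $\xi(s) = \xi(1-s)$, and whose zeroes are precisely the nontrivial zeroes of $\zeta(s)$. First I would fix a large $T$ (avoiding the ordinate of any zero, which is permissible at the cost of an $O(1)$ error) and consider the rectangle $R$ with vertices $-1$, $2$, $2 + iT$, $-1 + iT$. By the argument principle,
\[
2\pi N(T) = \Delta_R \arg \xi(s),
\]
and the functional equation, together with $\overline{\xi(s)} = \xi(\bar{s})$, implies that the change in argument along the left half of $R$ equals the change along the right half. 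It therefore suffices to compute $\Delta \arg \xi(s)$ along the contour from $2$ up to $2 + iT$ and then across to $1/2 + iT$, and to double the result.

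Next I would split $\log \xi(s)$ into its four pieces:
\[
\log \xi(s) = \log \tfrac{1}{2} + \log s + \log(s-1) - \tfrac{s}{2} \log \pi + \log \Gamma(s/2) + \log \zeta(s).
\]
The contributions from $\log \tfrac{1}{2}$, $\log s$, $\log(s-1)$, and $-\frac{s}{2} \log \pi$ along the chosen contour are straightforward and yield terms of size $O(1)$ together with $-\frac{1}{2} T \log \pi$. Stirling's asymptotic expansion for $\log \Gamma(s/2)$ supplies the main arithmetic: a careful computation shows that
\[
\Delta \arg \Gamma(s/2) = \tfrac{T}{2} \log \tfrac{T}{2} - \tfrac{T}{2} - \tfrac{\pi}{8} + O(T^{-1})
\]
along the horizontal segment plus the vertical segment from $2$ to $2 + iT$, which, after doubling, produces the main terms $\frac{1}{2\pi} T \log T - \frac{1}{2\pi}(1 + \log 2\pi) T$ of the desired formula.

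The remaining and principal difficulty is to control
\[
S(T) := \tfrac{1}{\pi} \arg \zeta(\tfrac{1}{2} + iT),
\]
where $\arg \zeta(s)$ is continued along the contour, and to show $S(T) = O(\log T)$. On the vertical segment $\Re(s) = 2$ the bound $|\zeta(s) - 1| < 1$ trivially gives $O(1)$, so the issue lies along the horizontal segment from $2 + iT$ to $\frac{1}{2} + iT$. Here I would invoke the classical device of bounding the number of sign changes of $\Re \zeta(\sigma + iT)$ on $\frac{1}{2} \leq \sigma \leq 2$: each traversal of the imaginary axis contributes at most $\pi$ to the change in argument, and by a Jensen/Blaschke argument applied to the function $g(z) = \frac{1}{2}(\zeta(z + iT)^N + \overline{\zeta(z - iT)^N})$ on suitable discs, the number of such zeros is bounded by $O(\log T)$. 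This bound on $S(T)$ is the heart of the argument and is where the $O(\log T)$ error term in the statement originates.

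Finally, the second assertion $N(T+1) - N(T) \ll \log T$ is an immediate consequence of the first: subtracting the asymptotic formula at $T+1$ and $T$ yields a main term $\frac{1}{2\pi} \log T + O(1)$ together with two independent $O(\log T)$ errors, so the difference is $O(\log T)$. Thus the main obstacle is the classical $S(T) = O(\log T)$ bound; the rest is Stirling combined with the symmetry afforded by the functional equation.
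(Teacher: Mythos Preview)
The paper does not supply its own proof of this theorem; it is quoted verbatim as \cite[Corollary 14.3]{Montgomery} and used as a black box. Your sketch is the classical Riemann--von Mangoldt argument via the argument principle applied to $\xi(s)$, Stirling for the $\Gamma$-factor, and the Jensen-type bound $S(T)=O(\log T)$, which is precisely the route taken in the cited reference, so there is nothing to compare. One small slip: in your definition of the auxiliary analytic function the exponent $N$ is spurious --- the standard choice is $g(z)=\tfrac{1}{2}\bigl(\zeta(z+iT)+\zeta(z-iT)\bigr)$, which restricts to $\Re\zeta(\sigma+iT)$ on the real axis and whose zeros in a disc about $z=2$ are then counted by Jensen's formula to give the $O(\log T)$ bound.
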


By partial summation, these estimates allow us to determine accurate bounds on sums of the form
	\[\sum_{|\gamma| < T}{\frac{1}{|\rho - \alpha|}}.
\]
Similarly, the size of $\zeta(2\rho)$ can be bounded through classical results of Littlewood on the growth of $\zeta(s)$ on the line $\Re(s) = 1$.

\begin{theorem}[Littlewood {\cite[\S 13.3]{Montgomery}}]
Assume RH. Then for all $|t| \geq 1$,
\begin{align}
|\zeta(1+it)| & \leq 2e^{\gamma_0} \log \log \tau + O(1), {\label{Littlewood}} \\
\frac{1}{|\zeta(1 + it)|} & \leq \frac{12 e^{\gamma_0}}{\pi^2} \log \log \tau + O(1), {\label{Littlewoodlower}}
\end{align}
where $\tau = |t| + 4$ and $\gamma_0$ is the Euler--Mascheroni constant.
\end{theorem}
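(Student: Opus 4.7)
\emph{Proof proposal.}
The plan is to bound $|\zeta(1+it)|$ by truncating the Euler product at $y = (\log \tau)^2$ and then invoking the Mertens-type asymptotics
\[
\prod_{p \leq y} \left(1 - \frac{1}{p}\right)^{-1} = e^{\gamma_0} \log y + O(1), \qquad \prod_{p \leq y} \left(1 + \frac{1}{p}\right) = \frac{6}{\pi^2} e^{\gamma_0} \log y + O(1),
\]
the second following from the first together with $\prod_p (1 - 1/p^2) = 1/\zeta(2) = 6/\pi^2$. Substituting $\log y = 2 \log \log \tau$ produces the advertised constants $2e^{\gamma_0}$ and $12 e^{\gamma_0}/\pi^2$.

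The main technical step is to show that under RH the tail of the Euler product at $y = (\log \tau)^2$ is negligible in a quantitatively strong sense, namely
\[
\prod_{p > y} \bigl(1 - p^{-(1+it)}\bigr)^{-1} = 1 + O\!\left(\frac{1}{\log y}\right),
\]
with an analogous estimate for the reciprocal product. I would establish this by expanding $\log \zeta(s) = \sum_{p,k} 1/(k p^{ks})$, applying Perron's formula to the partial sum $\sum_{p^k \leq y} 1/(k p^{ks})$ at $s = 1 + it$, and shifting the contour past the line $\Re(s) = 1$ to some $\Re(s) = 1/2 + \delta$. Under RH, $\log \zeta$ grows only slowly in this region, and the local density estimate \eqref{N(T+1)} allows one to select horizontal truncation heights on which the contour integral is acceptably small. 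The prime power contributions with $k \geq 2$ converge absolutely and contribute only $O(1)$, so the estimate reduces to the $k = 1$ case.

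Granting the truncation, the upper bound follows at once from
\[
|\zeta(1+it)| \leq \prod_{p \leq y}\left(1 - \frac{1}{p}\right)^{-1} \cdot \prod_{p > y} \bigl|1 - p^{-(1+it)}\bigr|^{-1} = \bigl(e^{\gamma_0} \log y + O(1)\bigr)\bigl(1 + O(1/\log y)\bigr),
\]
using $|1 - p^{-(1+it)}|^{-1} \leq (1 - 1/p)^{-1}$ for $p \leq y$. The lower bound \eqref{Littlewoodlower} follows by the parallel argument with $|1 - p^{-(1+it)}| \leq 1 + 1/p$ for $p \leq y$.

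The hard part is the truncation step: it is essential that the saving is the quantitative $O(1/\log y)$ rather than merely $o(1)$, since this is precisely what produces the clean $+O(1)$ error in the stated bounds rather than a lossier $+o(\log \log \tau)$. Once that is in hand, the remainder of the argument is a routine consequence of the Mertens asymptotics.
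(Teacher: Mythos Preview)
The paper does not actually prove this theorem: it is quoted as a classical result of Littlewood with a citation to \cite[\S 13.3]{Montgomery}, and no argument is supplied. So there is no ``paper's own proof'' against which to compare.

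That said, your sketch is the standard Littlewood argument and is correct in outline. The approach in Montgomery--Vaughan (and in Titchmarsh) is exactly what you describe: under RH one shows, via a contour-shifting argument applied to $\log \zeta$, that for $y = (\log \tau)^2$ the truncated Euler product $\prod_{p \leq y}(1-p^{-(1+it)})^{-1}$ approximates $\zeta(1+it)$ up to a multiplicative factor $1 + O(1/\log\log\tau)$, and then the pointwise bounds $|1-p^{-(1+it)}|^{-1} \leq (1-1/p)^{-1}$ and $|1-p^{-(1+it)}| \leq 1+1/p$ together with Mertens' theorem deliver the constants $2e^{\gamma_0}$ and $12e^{\gamma_0}/\pi^2$. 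One small caveat: the infinite tail product $\prod_{p>y}(1-p^{-(1+it)})^{-1}$ is not a priori convergent on the line $\Re(s)=1$, so the precise statement is about $\log\zeta(1+it) - \sum_{p^k \leq y} 1/(k p^{k(1+it)})$ rather than a literal tail product; but you already signal this by working through Perron's formula and the analytic continuation of $\log\zeta$, so this is only a matter of phrasing.
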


So it remains to determine bounds on the size of $1/\zeta'(\rho)$. This, however, proves to be highly difficult. Indeed, it is not even known whether all the zeroes of $\zeta(s)$ are simple, so it is conceivable that such a bound may be unattainable. For a precise lower bound, on the other hand, there is a classical result of Littlewood.

\begin{theorem}[Littlewood {\cite[Theorem 13.18, Theorem 13.21]{Montgomery}}]
Assume RH. There exists an absolute constant $c > 0$ such that
\begin{equation}{\label{Littlewoodderivative}}
\zeta'(\rho) \ll \exp\left(c \frac{\log \gamma}{\log \log \gamma}\right)
\end{equation}
for every nontrivial zero $\rho = 1/2 + i\gamma$ of $\zeta(s)$.
\end{theorem}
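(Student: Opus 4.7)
The natural route is to upper-bound $\zeta'(\rho)$ by Cauchy's integral formula, reducing the problem to a uniform upper bound on $\zeta(s)$ itself in a small disk around $\rho$. The plan is as follows.

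First I would recall the companion Littlewood bound: under RH, for $\sigma \geq 1/2$ and $|t| \geq 2$ one has
	\[|\zeta(\sigma+it)| \ll \exp\left(C \frac{\log|t|}{\log\log|t|}\right)\]
for an absolute constant $C > 0$ (this is proved in the same circle of ideas as \eqref{Littlewood} and is the standard conditional estimate just to the right of, and on, the critical line). Assuming this, the rest of the argument is an extraction of a derivative bound via Cauchy.

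Next I would fix $r = r(\gamma) = 1/\log\log\gamma$ (any radius $r$ with $r\log\gamma \asymp \log\gamma/\log\log\gamma$ works) and write
	\[\zeta'(\rho) = \frac{1}{2\pi i}\oint_{|s-\rho|=r}\frac{\zeta(s)}{(s-\rho)^2}\,ds, \qquad |\zeta'(\rho)| \leq \frac{1}{r}\max_{|s-\rho|=r}|\zeta(s)|.\]
On the arc of this circle with $\Re(s) \geq 1/2$, Littlewood's bound gives $|\zeta(s)| \ll \exp(C\log\gamma/\log\log\gamma)$ directly, since $|\Im(s)| = \gamma + O(r)$. On the arc with $\Re(s) < 1/2$, I would apply the functional equation $\zeta(s) = \chi(s)\zeta(1-s)$, where standard Stirling estimates give $|\chi(\sigma+it)| \ll |t|^{1/2-\sigma}$ uniformly for bounded $\sigma$. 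Since on this arc $1/2 - \sigma \leq r$, the prefactor contributes at most $\gamma^r = \exp(r\log\gamma) = \exp(\log\gamma/\log\log\gamma)$, while $|\zeta(1-s)|$ is again controlled by Littlewood's bound. Thus on the entire circle,
	\[|\zeta(s)| \ll \exp\left(C'\frac{\log\gamma}{\log\log\gamma}\right)\]
for a slightly larger constant $C' > 0$.

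Combining with the Cauchy estimate,
	\[|\zeta'(\rho)| \ll \frac{1}{r}\exp\left(C'\frac{\log\gamma}{\log\log\gamma}\right) \ll (\log\log\gamma)\exp\left(C'\frac{\log\gamma}{\log\log\gamma}\right) \ll \exp\left(c\frac{\log\gamma}{\log\log\gamma}\right)\]
for any $c > C'$, yielding the claimed bound. The main obstacle is ensuring the bound on $|\zeta(s)|$ is uniform on the whole disk of radius $r$ around $\rho$, and in particular balancing $r$: it must be small enough that $\chi(s)$ contributes no more than $\exp(\log\gamma/\log\log\gamma)$ on the left arc, yet the resulting factor $1/r$ in the Cauchy bound must itself be absorbed into the exponential. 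The choice $r \asymp 1/\log\log\gamma$ is designed precisely so that both of these competing constraints are satisfied simultaneously.
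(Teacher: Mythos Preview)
Your argument is correct and is the standard way to derive this bound. The paper itself does not supply a proof of this theorem: it is quoted as a known result with a citation to Montgomery--Vaughan, so there is no in-paper proof to compare against. Your route --- Cauchy's estimate on a circle of radius $r \asymp 1/\log\log\gamma$, Littlewood's conditional bound $|\zeta(s)| \ll \exp(C\log|t|/\log\log|t|)$ on the right arc, and the functional equation to control the left arc --- is exactly the canonical derivation, and your balancing of $r$ so that both $\gamma^r$ and $1/r$ are absorbed into the exponential is the right choice.
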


The following conjecture of Hejhal (for nonnegative powers) and Gonek (for negative powers) gives a reasonably precise rate on the growth of sums of powers of $\zeta'(\rho)$.

\begin{conjecture}[Gonek--Hejhal \cite{Gonek}, \cite{Hejhal}]
Let
	\[J_k(T) = \sum_{0 < \gamma < T}{|\zeta'(\rho)|^{2k}}
\]
be the discrete moment of order $k$ of the Riemann zeta function. Then for all $k \in \R$,
	\[J_k(T) \asymp T (\log T)^{(k+1)^2}.
\]
\end{conjecture}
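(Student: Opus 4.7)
The plan is to reduce the discrete moment $J_k(T)$ to continuous moments of $\zeta(s)$ via a residue calculation. Under RH, $\bar{\rho} = 1 - \rho$, so Schwarz reflection gives $\zeta'(1-\rho) = \overline{\zeta'(\rho)}$, and hence $|\zeta'(\rho)|^{2k} = \zeta'(\rho)^{k} \zeta'(1-\rho)^{k}$ for integer $k$, which is meromorphic in $\rho$. Since $-\zeta'/\zeta$ has simple poles of residue $1$ at the nontrivial zeros (assuming simplicity), for a rectangular contour $\mathcal{C}_T$ with vertical sides at $\Re(s) = 1/2 \pm \delta$ and horizontal sides at heights $0$ and $T$ (chosen via \eqref{N(T+1)} to skirt the zeros),
\[
\sum_{0 < \gamma < T} |\zeta'(\rho)|^{2k} = \frac{1}{2\pi i} \oint_{\mathcal{C}_T} \zeta'(s)^{k} \zeta'(1-s)^{k} \left(-\frac{\zeta'}{\zeta}(s)\right) ds.
\]
Horizontal contributions are handled by convexity bounds, so the task reduces to evaluating the two vertical-line integrals of $|\zeta'(s)|^{2k} |\zeta'/\zeta(s)|$ on $\Re(s) = 1/2 \pm \delta$.

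For positive integer $k$ I would first reproduce Gonek's asymptotic for $k = 1$: there the integrand is essentially a shifted second moment of $\zeta'$ times $\zeta'/\zeta$, which yields after an approximate-functional-equation analysis the constant $1/(12\pi^2)$ and the exponent $(k+1)^2 = 4$. The extension to $k = 2$ would require a sharp fourth moment of $\zeta'(1/2 + \delta + it)$; the predicted leading constant and the exponent $(k+1)^2$ both emerge from the Keating--Snaith CUE model and serve as targets for the calculation. For $k \geq 3$, asymptotics are out of reach, and the most one can realistically aim for are matching upper and lower bounds of the right order $T (\log T)^{(k+1)^2}$: upper bounds via Soundararajan--Harper-type mollifier arguments on RH, and lower bounds via the Radziwi\l\l--Soundararajan lower-bound machinery applied to the shifted continuous moment.

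For negative $k$, and in particular for the case $k = -1$ underlying Theorems~\ref{themaintheorem} and \ref{alpha=1/2theorem}, the same contour strategy works formally but the integrand now carries $|\zeta'|^{-2}$, and the sum is dominated by zeros at which $|\zeta'(\rho)|$ is anomalously small. Here I would aim only for the upper bound $J_{-1}(T) \ll T$, by (i) estimating $\int_0^T |\zeta'(1/2 + \delta + it)|^{-2}\,dt$ via a hybrid Euler--Hadamard product of the Gonek--Hughes--Keating type, (ii) invoking a quantitative level-repulsion statement, of the kind predicted by GUE statistics, to control clusters of nearby zeros, and (iii) applying Cauchy--Schwarz against the already-known bound $J_1(T) \ll T(\log T)^4$ to absorb the exceptional contribution of zeros with very small $|\zeta'(\rho)|$.

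The main obstacle is that this plan runs headlong into several notoriously open problems. For $k \geq 2$ the asymptotic evaluation is equivalent to, or strictly harder than, the $2k$-th continuous moment conjecture for $\zeta(1/2 + it)$, unresolved for all $k \geq 3$. For $k = -1$ the lower bound in $|\zeta'(\rho)|^{-2}$ is essentially a quantitative form of the simplicity-of-zeros problem: current technology cannot even rule out the possibility that $|\zeta'(\rho)|$ is sometimes extremely small, so the step (ii) above remains hypothetical. Consequently the plan yields a complete unconditional-modulo-RH proof only in Gonek's case $k = 1$, and for the $k = -1$ estimate assumed in this paper delivers at best a conditional bound contingent on further unproved hypotheses about the value distribution of $\zeta'$ on the critical line.
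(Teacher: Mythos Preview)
The statement you are attempting to prove is stated in the paper as a \emph{conjecture}, not a theorem: the paper offers no proof, and indeed explicitly treats the special case $J_{-1}(T) \ll T$ as an unproved hypothesis on which Theorems~\ref{themaintheorem} and \ref{alpha=1/2theorem} are conditioned. There is therefore no ``paper's own proof'' to compare against; the paper only records partial progress (the asymptotic for $k=1$ on RH, the correct order for $k=2$ on RH, Milinovich--Ng lower bounds and Milinovich upper bounds for positive integer $k$, and Gonek's conditional lower bound for $k=-1$) and then assumes what it needs.

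Your proposal is not a proof but rather an informed survey of why the conjecture is hard, and you say as much in your final paragraph. The contour-integral reduction you sketch is indeed Gonek's method and works for $k=1$; for $k\ge 3$ it genuinely requires the unresolved $2k$-th moment asymptotics, and for $k=-1$ your step (ii) --- a quantitative level-repulsion input controlling how small $|\zeta'(\rho)|$ can be --- is itself an open problem at least as deep as the simplicity of the zeros. So the gap is not a flaw in your reasoning but the fact that the target is a conjecture: no argument along these or any other currently known lines closes it, which is precisely why the paper adopts $J_{-1}(T)\ll T$ as an assumption rather than a lemma.
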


Note that the Gonek--Hejhal conjecture necessarily implies the simplicity of the zeroes of the Riemann zeta function. This conjecture has been further refined via the work of Hughes, Keating, and O'Connell, who arrive at a more precise form of this conjecture by modelling the Riemann zeta function by the characteristic polynomial of a large random unitary matrix. Their method also suggests that the Gonek--Hejhal conjecture is actually false for $k \leq -3/2$.

\begin{conjecture}[Hughes--Keating--O'Connell \cite{Hughes}]{\label{Hughesconjecture}}
For all $k \in \C$ with $\Re(k) > -3/2$, we have that
\begin{equation}{\label{J_k(T)sym}}
J_k(T) \sim \frac{1}{2\pi} \frac{G^2(k+2)}{G(2k+3)} a(k) T (\log T)^{(k+1)^2},
\end{equation}
where $G(s)$ is the Barnes $G$-function and
	\[a(k) = \prod_{p}{\left(1-\frac{1}{p}\right)^{k^2} \sum^{\infty}_{m=0}{\left(\frac{\Gamma(m+k)}{m!\Gamma(k)}\right)^2 \frac{1}{p^m}}}.
\]
\end{conjecture}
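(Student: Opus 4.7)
Since Conjecture \ref{Hughesconjecture} remains open, I can only outline the heuristic derivation of Hughes, Keating, and O'Connell rather than give an actual proof. The strategy is to model $\zeta(s)$ near the critical line at height $T$ by the characteristic polynomial $Z_U(\theta) = \det(I - U e^{-i\theta})$ of a Haar-random element $U$ of the unitary group $U(N)$, with $N = \log T/(2\pi)$ chosen so that the mean density of eigenphases matches the mean density of nontrivial zeros at height $T$. Under this correspondence, $Z_U'(\theta_n)$ at an eigenphase $\theta_n$ plays the role of $\zeta'(\rho)$, and the discrete moment $J_k(T)$ is modelled by the unitary average
\[
\E_{U \in U(N)}\sum_{n=1}^{N} |Z_U'(\theta_n)|^{2k},
\]
rescaled by a factor accounting for the ratio between $N$ and the number of zeros of $\zeta$ below height $T$.

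The first main step is the random matrix computation. Writing the sum over $n$ as a contour integral around the eigenphases and applying the Weyl integration formula, one reduces the expectation to an integral over the $N$-torus against the Vandermonde-squared density. Further contour manipulation expresses this as a Selberg-type integral whose value is explicit in terms of the Barnes $G$-function; in the large-$N$ limit the leading order is
\[
\frac{G^2(k+2)}{G(2k+3)} N^{(k+1)^2}(1 + o(1)).
\]
The underlying Selberg integral converges precisely when $\Re(k) > -3/2$, which simultaneously explains the hypothesis appearing in the conjecture and the accompanying HKO prediction that the naive Gonek--Hejhal formula should fail for $k \leq -3/2$, a regime in which the tail of anomalously small values of $|\zeta'(\rho)|$ should begin to dominate.

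The second main step is to overlay the arithmetic data via the Keating--Snaith recipe. The random matrix model captures the universal unitary fluctuation of $\zeta(s)$ but is insensitive to its Euler product, so one multiplies the random-matrix answer by the arithmetic factor $a(k)$, interpreted as a product over primes of the $2k$-th moment of a local Euler factor whose Satake parameter is uniformly distributed on the unit circle. Substituting $N = \log T/(2\pi)$ and incorporating the constants arising from the normalisation of the zero-counting function \eqref{N(T)} then yields the predicted asymptotic.

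The essential obstacle is that this whole scheme, while producing an extremely precise prediction, is only heuristic: the random matrix identity is a genuine theorem about $U(N)$, but its transfer to $\zeta(s)$ rests on the unproven analogy between Haar measure on the unitary group and the statistical behaviour of the nontrivial zeros. Even the much weaker bound $J_{-1}(T) \ll T$ assumed elsewhere in this paper is open, and any rigorous progress toward it would presently need to proceed via direct moment computations in the style of Gonek, Hughes, and Young rather than through the random matrix derivation itself.
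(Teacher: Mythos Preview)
The statement is a \emph{conjecture}, and the paper offers no proof or even a heuristic sketch: it simply records the Hughes--Keating--O'Connell prediction, defines the Barnes $G$-function appearing in it, and then surveys known partial results (Milinovich--Ng, Milinovich, Gonek) toward related moment bounds. There is therefore no ``paper's own proof'' to compare your attempt against.

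You handle this appropriately by acknowledging at the outset that the conjecture is open and by giving a heuristic outline of the HKO derivation rather than a proof. Your sketch is an accurate summary of that derivation: modelling $\zeta(s)$ at height $T$ by the characteristic polynomial of a Haar-random $U(N)$ matrix with $N \sim (\log T)/(2\pi)$, evaluating the random-matrix discrete moment via a Selberg-type integral that converges precisely for $\Re(k) > -3/2$ and yields the factor $G^2(k+2)/G(2k+3)\,N^{(k+1)^2}$, and then inserting the arithmetic factor $a(k)$ by the Keating--Snaith splitting philosophy. Your closing remark that the transfer from $U(N)$ to $\zeta(s)$ is the genuine obstruction, and that even $J_{-1}(T) \ll T$ is open, is exactly the point the paper makes in the surrounding discussion. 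In short, your proposal goes beyond what the paper does for this statement, and does so correctly; there is no gap to flag because you have not claimed a proof.
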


Here the Barnes $G$-function is defined by
	\[G(s+1) = (2\pi)^{s/2} \exp\left(-\frac{1}{2}(s^2 + \gamma_0 s^2 + s)\right) \prod^{\infty}_{n=1}{\left(1+\frac{s}{n}\right)^n \exp\left(-s + \frac{s^2}{2n}\right)},
\]
for $s \in \C$.

Limited progress has been made on these conjectures. In the case of $k$ being a nonnegative integer, the precise asymptotic results are known for $k = 0$ unconditionally, $k = 1$ under the assumption of the Riemann hypothesis, and $k = 2$ up to correct order assuming the Riemann hypothesis. Moreover, recent conditional results have determined the order of $J_k(T)$ up to multiplication by an error term of size $\exp(c\log \log T /\log \log \log T)$.

\begin{theorem}[Milinovich--Ng \cite{Milinovich2}, Milinovich \cite{Milinovich1}]
Assume RH, and suppose that $k$ is a positive integer. Then for each $k$, there is an absolute constant $c > 0$ such that
	\[T (\log T)^{(k+1)^2} \ll J_k(T) \ll T (\log T)^{(k+1)^2} \exp\left(c \frac{\log \log T}{\log \log \log T}\right).
\]
\end{theorem}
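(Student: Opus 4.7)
The proof naturally splits into a lower bound, due to Milinovich--Ng, and an upper bound, due to Milinovich, which rely on rather different techniques.

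For the lower bound, the plan is to apply the Rudnick--Soundararajan mollifier method, adapted to discrete moments over the zeros. Fix a small $\theta > 0$ and set
\[
B(s) = \sum_{n \leq T^{\theta}} \frac{d_k(n)}{n^s},
\]
with $d_k$ the generalised divisor function. Form the twisted first moment
\[
S_1(T) = \sum_{0 < \gamma < T} \zeta'(\rho) B(\rho)^{2k-1}
\]
and the mean $2k$-th power $S_2(T) = \sum_{0 < \gamma < T} |B(\rho)|^{2k}$. H\"older's inequality with exponents $2k$ and $2k/(2k-1)$ then yields
\[
J_k(T) \geq \frac{|S_1(T)|^{2k}}{S_2(T)^{2k-1}}.
\]
Both $S_1$ and $S_2$ can be evaluated asymptotically using Gonek's explicit formula, which converts each zero sum into a contour integral of $(\zeta'/\zeta)(s)$ against a short Dirichlet polynomial; the main contributions come from residues and diagonal terms and yield $S_i \sim c_i T (\log T)^{e_i}$ for explicit constants and exponents, and with the coefficients $d_k(n)$ chosen to match $\zeta(s)^k$ the exponents combine to produce the desired lower bound $J_k(T) \gg T (\log T)^{(k+1)^2}$.

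For the upper bound, the plan is to adapt Soundararajan's moment method for $\zeta(1/2 + it)$ to $\zeta'(\rho)$. Assuming RH, one first establishes a pointwise estimate, for parameters $2 \leq x \leq T$ and $\lambda > 0$, of the shape
\[
\log |\zeta'(\rho)| \leq \Re \sum_{n \leq x} \frac{\Lambda(n)}{n^{1/2 + \lambda/\log x + i\gamma} \log n} \frac{\log(x/n)}{\log x} + (1+\lambda)\frac{\log T}{2\log x} + O(\log \log T),
\]
obtained via the Hadamard product and its logarithmic derivative. Exponentiating and raising to the $2k$-th power, $J_k(T)$ is bounded by the exponential moment over the zeros of a short Dirichlet polynomial supported on prime powers. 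Expanding the exponential as a Taylor series and applying a discrete Montgomery--Vaughan-style mean value theorem together with a Rankin-type trick reduces the problem to sums over primes that are controlled by Mertens' theorem. Finally, choosing $\log x \asymp \log T \cdot \log \log \log T / \log \log T$ balances the two error terms and produces the claimed factor $\exp(c \log \log T / \log \log \log T)$.

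The main technical obstacle is the upper bound. Establishing the pointwise bound with the correct dependence on $x$ requires careful treatment of the zeros near $\rho$, since $\log |\zeta'(\rho)|$ involves the derivative of $\zeta$ rather than $\log|\zeta|$ directly, and the exponential moment over the discrete set of zeros (rather than over a continuous interval) demands a hybrid version of Gonek's explicit formula with sufficient uniformity in the length of the auxiliary Dirichlet polynomial. The combinatorial bookkeeping needed to extract the precise loss $\exp(c \log \log T / \log \log \log T)$ rather than a small power of $\log T$ is the delicate final step. For the lower bound, the principal technical work lies in verifying Gonek's formula with uniformity in the length $T^\theta$, so that the main terms in $S_1$ and $S_2$ can be cleanly isolated.
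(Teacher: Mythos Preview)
The paper does not prove this theorem; it is merely quoted with attributions to \cite{Milinovich2} and \cite{Milinovich1} as background on the conjectural size of $J_k(T)$, and no proof appears anywhere in the paper. There is therefore nothing in the paper against which to compare your proposal.

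For what it is worth, your outline is a broadly accurate summary of the arguments in the cited works. The lower bound of Milinovich--Ng does proceed by choosing a short Dirichlet polynomial with $d_k$-coefficients to mimic $\zeta'(s)^k$, applying a H\"older/Cauchy--Schwarz inequality to isolate $J_k(T)$, and evaluating the resulting sums over zeros via Gonek's uniform version of Landau's formula. The upper bound of Milinovich is indeed an adaptation of Soundararajan's method for continuous moments of $\zeta(1/2+it)$: one proves a pointwise inequality for $\log|\zeta'(\rho)|$ in terms of a truncated prime sum, passes to an exponential moment over the zeros, and optimises the truncation length to obtain the extra factor $\exp(c\log\log T/\log\log\log T)$. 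Your identification of the delicate points---handling the derivative rather than $\log|\zeta|$ near $\rho$, and the discrete mean-value estimates needed for the exponential moment---is correct.
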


In the case of negative powers, which is of most relevance to us, much less progress has been made. Indeed, the only accurate bound is a conditional result on the lower bound on the order of $J_{-1}(T)$.

\begin{theorem}[Gonek \cite{Gonek}]
Assume RH and that all of the zeroes of $\zeta(s)$ are simple. Then
\begin{equation}{\label{J_{-1}(T)lower}}
J_{-1}(T) \gg T.
\end{equation}
\end{theorem}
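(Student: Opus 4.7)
The plan is to combine Cauchy--Schwarz with a contour-integral identity for a weighted sum over zeros of $\zeta(s)$, reducing the lower bound for $J_{-1}(T)$ to a mean-value estimate for a Dirichlet polynomial sampled at those zeros. Fix a Dirichlet polynomial $A(s) = \sum_{n \leq Y}{a(n)/n^s}$ of length $Y = T^\theta$ for some fixed $0 < \theta < 1$, and apply Cauchy--Schwarz in the form
\[
\left|\sum_{0 < \gamma < T}{\frac{A(\rho)}{\zeta'(\rho)}}\right|^2 \leq J_{-1}(T) \cdot \sum_{0 < \gamma < T}{|A(\rho)|^2}.
\]
Gonek's hybrid mean value theorem supplies, for $\theta$ bounded away from $1$, the upper bound $\sum_{0 < \gamma < T}{|A(\rho)|^2} \ll T \sum_{n \leq Y}{|a(n)|^2 \log(2T/n)/n}$.

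For the left-hand side, I would integrate $A(s)/\zeta(s)$ along the rectangle with vertices $c \pm iT$ and $1 - c \pm iT$, where $c = 1 + 1/\log T$. The assumed simplicity of the zeros of $\zeta(s)$ ensures that the residue at a nontrivial zero $\rho$ is exactly $A(\rho)/\zeta'(\rho)$, while RH together with the classical bound \eqref{Littlewoodlower} for $1/\zeta(s)$ and the density estimate \eqref{N(T+1)} control the integrals over the boundary edges. This yields a Landau--Gonek type explicit formula
\[
\sum_{0 < |\gamma| < T}{\frac{A(\rho)}{\zeta'(\rho)}} = -\frac{T}{2\pi} \sum_{n \leq Y}{\frac{a(n) \Lambda(n)}{n}} + O(\text{error}),
\]
whose main term is of order $T \log Y$ for the natural choice $a(n) = \mu(n)$: since $\mu(n)\Lambda(n)$ is supported on primes, Mertens' theorem gives $\sum_{n \leq Y}{\mu(n)\Lambda(n)/n} = -\sum_{p \leq Y}{\log p/p} = -\log Y + O(1)$.

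With this choice of $a(n)$, the right-hand factor in Cauchy--Schwarz evaluates to $\sum_{n \leq Y}{\mu(n)^2 \log(2T/n)/n} \asymp \log Y \cdot \log T$, so combining the two estimates produces
\[
T^2 (\log Y)^2 \ll J_{-1}(T) \cdot T \log Y \log T,
\]
that is, $J_{-1}(T) \gg T \log Y / \log T \gg T$ for any fixed $\theta \in (0,1)$, which is the desired bound.

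The principal obstacle is the delicate calibration of the length $Y$ relative to $T$, together with the uniform control of the error in the Landau--Gonek explicit formula. The boundary contributions from the vertical segment $\Re(s) = 1 - c$ must remain of smaller order than the $T \log Y$ main term; bounding them requires combining the functional equation for $\zeta(s)$ with the zero-counting estimate \eqref{N(T+1)} and the simplicity hypothesis, which together ensure that $A(s)/\zeta(s)$ grows at a controlled rate on the reflected line. The interplay between the length of the Dirichlet polynomial, the range of validity of Gonek's hybrid mean value theorem, and the uniformity of the explicit-formula error is where the argument is most delicate, and is precisely why only $J_{-1}(T) \gg T$ (rather than a sharp asymptotic) is presently obtainable.
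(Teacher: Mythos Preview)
The paper does not prove this result; it is quoted as a theorem of Gonek \cite{Gonek} and used thereafter only as motivation for the conjectural upper bound $J_{-1}(T)\ll T$. There is thus no proof in the paper against which to compare your sketch, which should instead be weighed against Gonek's original argument.

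Your overall architecture --- Cauchy--Schwarz in the shape $\bigl|\sum A(\rho)/\zeta'(\rho)\bigr|^2 \le J_{-1}(T)\sum|A(\rho)|^2$, with $A$ a M\"{o}bius mollifier of length $T^\theta$, a contour-integral evaluation of the linear sum, and a discrete mean value for the quadratic one --- is indeed Gonek's. But the explicit formula you write down is not what that contour integral actually produces. You have transplanted the Landau--Gonek asymptotic $\sum_{0<\gamma<T}n^{\rho}\sim-(T/2\pi)\Lambda(n)$, which governs sums $\sum_\rho A(\rho)$ \emph{without} the factor $1/\zeta'(\rho)$, into a setting where it does not apply. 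Integrating $A(s)/\zeta(s)$ along $\Re(s)=c$ picks out the constant Dirichlet coefficient of $A(s)\sum_m\mu(m)m^{-s}$, which is simply $a(1)$; the resulting main term is of size $T$, not $T\log Y$, and no von Mangoldt weight appears. With the corrected main term your closing chain of inequalities yields only $J_{-1}(T)\gg T/(\log T\cdot\log Y)$, two logarithms short of the goal. Recovering those logarithms is precisely the substance of Gonek's argument and requires a finer analysis than the off-the-shelf mean-value bound you invoke.
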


Observe that this bound is consistent with the asymptotic behaviour predicted in Conjecture \ref{Hughesconjecture}. For $J_{-1/2}(T)$, there is also a lower bound, but Conjecture \ref{Hughesconjecture} suggests that this underestimates the growth of $J_{-1/2}(T)$ by a factor of $(\log T)^{1/4}$.

\begin{theorem}[Garaev--Sankaranarayanan {\cite{Garaev}}]
Unconditionally, we have that
\begin{equation}{\label{J_{-1/2}(T)lower}}
J_{-1/2}(T) \gg T,
\end{equation}
where we restrict the sum to be over only the simple nontrivial zeroes of $\zeta(s)$.
\end{theorem}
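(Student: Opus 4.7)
The plan is to produce an unconditional lower bound by evaluating the \emph{signed} sum over simple zeros,
\[
S(T) = \sum_{0 < \gamma < T,\, \rho \text{ simple}} \frac{1}{\zeta'(\rho)},
\]
and showing that $S(T) = T/(2\pi) + o(T)$. Since $|S(T)| \leq J_{-1/2}(T)$ by the triangle inequality, this immediately gives $J_{-1/2}(T) \gg T$.

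For the evaluation of $S(T)$, I would use the residue theorem applied to $1/\zeta(s)$, whose residue at a simple zero $\rho$ is precisely $1/\zeta'(\rho)$. Fix small $\delta > 0$ and choose the height $T$ at distance $\gg 1/\log T$ from every zero ordinate --- possible by \eqref{N(T+1)} --- and integrate around the rectangle $\mathcal{R}$ with vertices $-\delta$, $1+\delta$, $1+\delta+iT$, $-\delta+iT$. The main term comes from the right vertical: the Dirichlet series $1/\zeta(s) = \sum_n \mu(n) n^{-s}$ converges absolutely there, and the $n=1$ term contributes exactly $T/(2\pi)$, while the tail $n \geq 2$ converges to $O(1)$. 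On the left vertical, the functional equation and standard bounds on the gamma factor yield $|1/\zeta(-\delta+it)| \ll |t|^{-1/2-\delta}$, giving a contribution of $O(T^{1/2-\delta})$. On the top horizontal, unconditional pointwise bounds (courtesy of our careful choice of $T$) give $|1/\zeta(s)| = T^{o(1)}$, contributing $O(T^{\e})$. Summing these pieces, the total sum of residues inside $\mathcal{R}$ equals $T/(2\pi) + O(T^{1/2+\e})$.

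The principal obstacle is separating the residues at simple zeros from those at potentially non-simple zeros: the residue of $1/\zeta(s)$ at a zero $\rho$ of order $m \geq 2$ is not $1/\zeta'(\rho)$ (indeed $\zeta'(\rho) = 0$ there), but a more intricate expression involving higher derivatives of $\zeta$. To control this contribution, I would invoke the unconditional theorem of Levinson (as refined by Heath-Brown) that a positive proportion of nontrivial zeros are simple, so there are at most $(1-c)N(T)$ non-simple zeros in $(0,T)$ for some $c > 0$. Combined with pointwise residue bounds via the maximum modulus principle applied to $(s-\rho)^m/\zeta(s)$ on a disc of radius $\asymp 1/\log T$, together with the unconditional bound $|1/\zeta(s)| = T^{o(1)}$ in the critical strip, this should show that the total contribution from multiple zeros is of order $o(T)$. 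Subtracting gives $|S(T)| = T/(2\pi) + o(T)$, and the triangle inequality then yields $J_{-1/2}(T) \gg T$.
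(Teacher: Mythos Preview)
Your contour-integral strategy is the natural first idea, but it has two genuine gaps that prevent it from being unconditional.

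\medskip

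\textbf{The horizontal segment.} The claim that $|1/\zeta(\sigma+iT)| = T^{o(1)}$ for $-\delta \leq \sigma \leq 1+\delta$ on a well-chosen height $T$ is only known under RH; indeed, the paper itself records this as \eqref{1/zetauniformbounds} with the explicit hypothesis ``Assume RH''. Unconditionally, choosing $T$ at distance $\gg 1/\log T$ from every ordinate and using the local expansion $\log\zeta(\sigma+iT) = \sum_{|\gamma-T|<1}\log(\sigma+iT-\rho) + O(\log T)$ gives only
\[
\frac{1}{|\zeta(\sigma+iT)|} \ll \exp\bigl(C\log T\,\log\log T\bigr),
\]
since there are $\asymp \log T$ nearby zeroes and each contributes $\log|\sigma+iT-\rho| \geq -\log\log T$. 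This bound grows faster than any fixed power of $T$, so the top edge of your rectangle contributes a term that swamps the main term $T/(2\pi)$.

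\medskip

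\textbf{The multiple zeroes.} Even granting the (unavailable) bound above, the subtraction step fails quantitatively. Levinson--Conrey gives a positive proportion of simple zeroes, but it still allows $\asymp T\log T$ zeroes of order $\geq 2$. Your proposed residue bound via maximum modulus on a disc of radius $r \asymp 1/\log T$ gives, for a zero of order $m$, a residue of size at most $r\cdot\max_{|s-\rho|=r}|1/\zeta(s)|$; but on that circle $|\zeta(s)| \asymp |\zeta^{(m)}(\rho)|\,r^{m}/m!$, and there is no unconditional lower bound on $|\zeta^{(m)}(\rho)|$. Even if each residue were $T^{o(1)}$, summing over $\asymp T\log T$ multiple zeroes yields $T^{1+o(1)}$, not $o(T)$. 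So the separation of simple from non-simple contributions cannot be carried out this way.

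\medskip

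The paper does not reproduce Garaev and Sankaranarayanan's argument, so there is no in-text proof to compare against; but their method must circumvent both obstacles above. The standard device is to replace the sharp cutoff by a smooth weight (so that the horizontal segments disappear and one integrates $\phi(s)/\zeta(s)$ along vertical lines only, with $\phi$ decaying rapidly), and to arrange the integrand so that the residue at \emph{every} zero, simple or not, is accounted for in a way that either vanishes at multiple zeroes or can be controlled in aggregate rather than pointwise. Your sketch does neither, and as written is a conditional (RH) argument, not an unconditional one.
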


For our applications, the lower bound \eqref{J_{-1}(T)lower} is not of any use; it is an upper bound that we require, in the form of the following conjecture.

\begin{conjecture}
We have that
	\[J_{-1}(T) = \sum_{0 < \gamma < T}{\frac{1}{|\zeta'(\rho)|^2}} \ll T.
\]
\end{conjecture}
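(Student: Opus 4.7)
The upper bound $J_{-1}(T) \ll T$ is a special case of the Gonek--Hejhal conjecture at $k = -1$ and appears to be genuinely open; by contrast, the companion lower bound $J_{-1}(T) \gg T$ was established by Gonek under RH and simplicity. The only plausible route toward the upper bound is the mollifier method, the aim being to replace the intractable quantity $1/|\zeta'(\rho)|^2$ by a computable Dirichlet polynomial expression and reduce the problem to a mean value estimate on the critical line.

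Concretely, my plan would be as follows. For each simple zero $\rho = 1/2 + i\gamma$, Cauchy's residue theorem gives
\[\frac{1}{\zeta'(\rho)} = \frac{1}{2\pi i}\oint_{|s - \rho| = r}\frac{ds}{\zeta(s)},\]
valid for $r$ less than the distance to the nearest other zero; choose $r \asymp 1/\log \gamma$, which matches the expected zero spacing coming from \eqref{N(T+1)}. Summing over $0 < \gamma < T$ and opening the square in $|\zeta'(\rho)|^{-2}$ via two such contours should relate $J_{-1}(T)$ to a double integral, over short segments parallel to the critical line, of $|\zeta(s)|^{-2}$. To handle this negative second moment, introduce a Selberg-type mollifier $M(s) = \sum_{n \leq T^{\theta}}\mu(n)\, P(\log(T/n)/\log T)\, n^{-s}$ with $\theta < 1$ and $P$ a suitable polynomial, designed so that $\zeta(s) M(s) \approx 1$. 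The hope is to bound
\[\int_{0}^{T} \frac{|M(1/2+it)|^{2}}{|\zeta(1/2+it)\, M(1/2+it)|^{2}}\, dt \ll T,\]
which, if $\zeta M$ were bounded below, would reduce to the standard mean value $\int |M|^2 \asymp T$ for Dirichlet polynomials of length $T^\theta$.

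The main obstacle is precisely that $\zeta(s) M(s)$ is smallest where its reciprocal is required to be large, namely in neighbourhoods of the zeros of $\zeta(s)$. Pointwise, the bound $|\zeta'(\rho)|^{-2} \ll \exp(c\log\gamma/\log\log\gamma)$ extractable from Littlewood's estimate \eqref{Littlewoodderivative} is vastly too weak to sum to $T$. Moreover, the random matrix heuristic of Hughes, Keating, and O'Connell predicts that $J_{-k}(T)$ diverges for $k \geq 3/2$, placing $k = -1$ at the very boundary of what moment methods can reasonably hope to establish. I therefore expect that any proof requires either a genuinely new structural input controlling the distribution of small values of $|\zeta'(\rho)|$ (finer than anything Littlewood-type contour arguments can deliver), or a moment identity that sidesteps pointwise control altogether, in the spirit of the treatments of $J_1(T)$ and $J_2(T)$. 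Absent such an input, the conjecture seems to lie firmly beyond current techniques, which is precisely why it appears here as a hypothesis rather than a theorem.
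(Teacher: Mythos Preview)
Your assessment is correct and aligns with the paper's treatment: this statement is a \emph{conjecture}, not a theorem, and the paper offers no proof of it whatsoever. The paper explicitly labels it as a conjecture, notes that it is used as a standing hypothesis throughout (alongside RH and LI), and remarks that its truth would be consistent with the Hughes--Keating--O'Connell prediction and with Gonek's conditional lower bound $J_{-1}(T) \gg T$. Your discussion of why the mollifier approach stalls---the failure of pointwise control on $|\zeta(s)M(s)|^{-1}$ near zeros, and the borderline nature of $k=-1$ in the random matrix heuristic---is a reasonable account of the difficulty, though it goes well beyond anything the paper itself says; the paper simply adopts $J_{-1}(T) \ll T$ as an unproved assumption without attempting any argument toward it.
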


Note that this implies that the zeroes of the Riemann zeta function are all simple. In conjunction with \eqref{J_{-1}(T)lower}, this conjecture shows that
	\[J_{-1}(T) \asymp T,
\]
while the stronger conjecture \eqref{J_k(T)sym} with $k = - 1/2$ implies that
	\[J_{-1}(T) \sim \frac{3}{\pi^3} T;
\]
this particular asymptotic was first conjectured by Gonek \cite{Gonek}. However, we do not require this stronger estimate in our applications. We also note that the assumption $J_{-1}(T) \ll T$ and the Cauchy--Schwarz inequality, along with the asymptotic \eqref{N(T)}, show that
\begin{equation}{\label{J_{-1/2}(T)upper}}
J_{-1/2}(T) \ll T (\log T)^{1/2}.
\end{equation}
Conjecture \ref{Hughesconjecture} suggests that this overestimates the growth of $J_{-1/2}(T)$ by a factor of $(\log T)^{1/4}$.

The hypothesis $J_{-1}(T) \ll T$ is used by Ng \cite{Ng} in applications concerning the summatory function of the M\"{o}bius function, which involves sums of the form
	\[\sum_{|\gamma| < T}{\frac{1}{|\rho \zeta'(\rho)|}}.
\]
With weighted sums of the Liouville function, however, we require estimates on sums of the form
	\[\sum_{|\gamma| < T}{\frac{|\zeta(2\rho)|}{|\rho - \alpha| |\zeta'(\rho)|}}.
\]
Nevertheless, it is simple to transfer from one bound to the other via the estimate \eqref{Littlewood} of Littlewood.

\begin{lemma}[cf.{} {\cite[Lemma 1]{Ng}}]
Assume RH and $J_{-1}(T) \ll T$, and let $0 \leq \alpha \leq 1/2$. Then
\begin{align}
\sum_{0 < \gamma < T}{\frac{|\zeta(2\rho)|}{|\rho - \alpha| |\zeta'(\rho)|}} & \ll (\log T)^{3/2} \log \log T, {\label{J_{-1/2}(T)}} \\
\sum_{T < \gamma < 2T}{\frac{|\zeta(2\rho)|^2}{(|\rho - \alpha| |\zeta'(\rho)|)^2}} & \ll \frac{(\log \log T)^2}{T}, {\label{J_{-1}(2T) - J_{-1}(T)}} \\
\sum_{0 < \gamma < T}{\frac{|\zeta(2\rho)|}{|\rho - \alpha| |\zeta'(\rho)|}} & \gg \frac{\log T}{\log \log T}, {\label{J_{-1/2}(T)decay}} \\
\sum_{\gamma > T}{\frac{|\zeta(2\rho)|^2}{(|\rho - \alpha| |\zeta'(\rho)|)^2}} & \gg \frac{1}{T (\log \log T)^2}. {\label{J_{-1}(T)decay}}
\end{align}
In all cases, we may choose the implicit constant to be independent of $\alpha$.
\end{lemma}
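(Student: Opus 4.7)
The plan is to use Littlewood's bounds \eqref{Littlewood} and \eqref{Littlewoodlower}, which on RH give $|\zeta(2\rho)|^{\pm 1} \ll \log\log(|\gamma|+4)$ for any nontrivial zero $\rho = 1/2 + i\gamma$, to absorb the $|\zeta(2\rho)|$ factors as multiplicative corrections of size $(\log\log T)^{\pm 1}$; this reduces each of the four inequalities to a bound on a corresponding $|\zeta(2\rho)|$-free sum $\sum_\gamma |\rho-\alpha|^{-a}|\zeta'(\rho)|^{-a}$ with $a \in \{1,2\}$. These bare sums will be handled via Cauchy--Schwarz, the zero-counting estimates \eqref{N(T)} and \eqref{N(T+1)}, the hypothesis $J_{-1}(T) \ll T$, Gonek's lower bound $J_{-1}(T) \gg T$, and (for \eqref{J_{-1/2}(T)decay}) Milinovich's upper bound on $J_1(T)$. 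Since each ingredient is uniform in $\alpha \in [0, 1/2]$, the independence of the implicit constants from $\alpha$ will follow automatically.

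I will handle the upper bounds first. For \eqref{J_{-1/2}(T)} I decompose the sum into dyadic blocks $2^j \leq \gamma < 2^{j+1}$ with $j = 0, 1, \ldots, \lfloor \log_2 T \rfloor$; on each block $|\rho - \alpha| \asymp 2^j$, and Cauchy--Schwarz together with $|\zeta(2\rho)| \ll \log\log T$ gives
\[
\sum_{2^j \leq \gamma < 2^{j+1}} \frac{|\zeta(2\rho)|}{|\rho-\alpha||\zeta'(\rho)|} \ll \frac{\log\log T}{2^j}\sqrt{N(2^{j+1}) - N(2^j)}\,\sqrt{J_{-1}(2^{j+1})} \ll \log\log T \cdot (\log T)^{1/2},
\]
using \eqref{N(T+1)} and $J_{-1}(T) \ll T$. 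Summing over the $O(\log T)$ blocks delivers \eqref{J_{-1/2}(T)}. The bound \eqref{J_{-1}(2T) - J_{-1}(T)} is immediate from the same substitutions, since $|\rho - \alpha| \asymp T$ and $|\zeta(2\rho)| \ll \log\log T$ uniformly on $T < \gamma < 2T$, and $J_{-1}(2T) - J_{-1}(T) \leq J_{-1}(2T) \ll T$.

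Next come the lower bounds. For \eqref{J_{-1}(T)decay} I restrict the infinite sum to $T < \gamma < cT$, where $c > 1$ is chosen so large that Gonek's bound $J_{-1}(T) \geq AT$ and the hypothesis $J_{-1}(T) \leq BT$ force $J_{-1}(cT) - J_{-1}(T) \geq (cA - B)T \gg T$; combining this with $|\zeta(2\rho)| \gg 1/\log\log T$ from \eqref{Littlewoodlower} and $|\rho - \alpha| \ll cT$ on the restricted range yields the claimed $\gg 1/(T(\log\log T)^2)$.

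The hard part will be \eqref{J_{-1/2}(T)decay}. My plan is to apply Cauchy--Schwarz in the reverse direction: writing $S$ for the sum in question,
\[
\biggl(\sum_{0 < \gamma < T} \frac{1}{|\rho-\alpha|}\biggr)^2 \leq S \cdot \sum_{0 < \gamma < T} \frac{|\zeta'(\rho)|}{|\rho-\alpha|\,|\zeta(2\rho)|}.
\]
The left-hand side is $\gg (\log T)^4$ by partial summation from \eqref{N(T)}. To bound the sum on the right I invoke $1/|\zeta(2\rho)| \ll \log\log T$ via \eqref{Littlewoodlower} and then estimate $\sum_\gamma |\zeta'(\rho)|/|\rho - \alpha|$ by a second, dyadic, Cauchy--Schwarz argument, fed by Milinovich's bound $J_1(T) \ll T(\log T)^4 \exp(c\log\log T/\log\log\log T)$. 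The goal is an upper bound of order $(\log T)^3 \log\log T$ on this denominator, after which rearranging gives $S \gg \log T/\log\log T$. Arranging this second Cauchy--Schwarz and the ensuing dyadic summation sharply enough --- so that neither step loses an extra $(\log T)^{1/2}$ --- is the principal technical obstacle, and is where the interplay between the $J_{-1}$-hypothesis, Gonek's lower bound, and the Milinovich $J_1$-bound must be exploited with care.
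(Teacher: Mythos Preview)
Your treatment of \eqref{J_{-1/2}(T)}, \eqref{J_{-1}(2T) - J_{-1}(T)}, and \eqref{J_{-1}(T)decay} is fine and essentially matches the paper: the paper phrases the first two via partial summation fed by $J_{-1/2}(T) \ll T(\log T)^{1/2}$ and $J_{-1}(T) \ll T$, and handles \eqref{J_{-1}(T)decay} by partial summation together with Gonek's $J_{-1}(T)\gg T$, but your dyadic/single-block variants are equivalent.

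The gap is in \eqref{J_{-1/2}(T)decay}. Your reverse Cauchy--Schwarz reduces matters to showing
\[
\sum_{0<\gamma<T}\frac{|\zeta'(\rho)|}{|\rho-\alpha|\,|\zeta(2\rho)|}\ \ll\ (\log T)^{3}\log\log T,
\]
but this bound is not attainable from the inputs you list. After using \eqref{Littlewoodlower} to strip $|\zeta(2\rho)|^{-1}$, one needs $\sum_{0<\gamma<T}|\zeta'(\rho)|/\gamma \ll (\log T)^3$. Any estimate of this sum through Cauchy--Schwarz (or H\"older) and $J_1(T)$ passes through $J_{1/2}(T)\le (N(T)\,J_1(T))^{1/2}\ll T(\log T)^{5/2}$, and partial summation then gives only $\sum_{0<\gamma<T}|\zeta'(\rho)|/\gamma\ll(\log T)^{7/2}$. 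Feeding this back yields
\[
S\ \gg\ \frac{(\log T)^4}{(\log T)^{7/2}\log\log T}\ =\ \frac{(\log T)^{1/2}}{\log\log T},
\]
which is weaker than the claimed $\log T/\log\log T$ by exactly the $(\log T)^{1/2}$ you were worried about. No ``interplay'' among $J_{-1}(T)\asymp T$, $J_0(T)\asymp T\log T$, and $J_1(T)\asymp T(\log T)^4$ can close this gap: by log-convexity (H\"older) these three moments give at best $J_{-1/2}(T)\gg T/(\log T)^{1/2}$, never $J_{-1/2}(T)\gg T$.

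The paper avoids this entirely by invoking the independent, unconditional lower bound $J_{-1/2}(T)\gg T$ of Garaev--Sankaranarayanan (equation \eqref{J_{-1/2}(T)lower}). With that in hand one simply writes, using \eqref{Littlewoodlower} and partial summation,
\[
\sum_{0<\gamma<T}\frac{|\zeta(2\rho)|}{|\rho-\alpha||\zeta'(\rho)|}
\ \gg\ \frac{1}{\log\log T}\left(\left[\frac{J_{-1/2}(t)}{t}\right]_{14}^{T}+\int_{14}^{T}\frac{J_{-1/2}(t)}{t^2}\,dt\right)
\ \gg\ \frac{\log T}{\log\log T}.
\]
You should replace your argument for \eqref{J_{-1/2}(T)decay} with this one.
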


\begin{proof}
The proof of the first and second estimates are essentially identical to the proof of \cite[Lemma 1]{Ng}: they follow by partial summation and Littlewood's estimate \eqref{Littlewood}, and, for the first estimate, the bound \eqref{J_{-1/2}(T)upper}. For the third estimate, we use \eqref{Littlewoodlower}, partial summation, the fact that the zero of $\zeta(s)$ with least positive imaginary part satisfies $\gamma > 14$, and \eqref{J_{-1/2}(T)lower}, so that
\begin{align*}
\sum_{0 < \gamma < T}{\frac{|\zeta(2\rho)|}{|\rho - \alpha| |\zeta'(\rho)|}} & \gg \frac{1}{\log \log T} \left(\left[\frac{J_{-1/2}(t)}{t}\right]^{T}_{14} + \int^{T}_{14}{\frac{J_{-1/2}(t)}{t^2} \, dt}\right)	\\
& \gg \frac{\log T}{\log \log T}.
\end{align*}
Similarly, for the fourth estimate, \eqref{Littlewoodlower}, partial summation, and \eqref{J_{-1}(T)lower} show that
\begin{align*}
\sum_{\gamma > T}{\frac{|\zeta(2\rho)|^2}{(|\rho - \alpha| |\zeta'(\rho)|)^2}} & \gg \frac{1}{(\log \log T)^2} \left(\left[\frac{J_{-1}(t)}{t^2}\right]^{\infty}_{T} + 2\int^{\infty}_{T}{\frac{J_{-1}(t)}{t^3} \, dt}\right)	\\
& \gg \frac{1}{T (\log \log T)^2}.
\end{align*}
Throughout, the implicit constant may be chosen independently of $\alpha$, as
	\[\gamma^2 \leq |\rho - \alpha|^2 = (1/2 - \alpha)^2 + \gamma^2 \leq 2 \gamma^2.
\qedhere\]
\end{proof}

It may initially seem strange that we assume the conjecture $J_{-1}(T) \ll T$ instead of bounds with $|\zeta(2\rho)|$ involved. This is due to a lack of study of asymptotic bounds of sums over zeroes relating to the latter; while we have the precise conjecture \eqref{J_k(T)sym} of Hughes--Keating--O'Connell, no such conjecture for the sum
	\[K_k(T) = \sum_{0 < \gamma < T}{\frac{|\zeta(2\rho)|^{2k}}{|\zeta'(\rho)|^{2k}}}
\]
exists, though we do note that Ng \cite[\S 8.3]{Ng2} suggests that $K_{-1}(T) \sim T/2\pi$, based on a heuristic method of Gonek. Thus it seems that the presence of $|\zeta(2\rho)|^{-2}$ leads only to a change in the constant in the asymptotic for $K_{-1}(T)$ in comparison to $J_{-1}(T)$. This can be explained by noting that $\zeta(s)$ has essentially a constant mean value along the line $\Re(s) = 1$, in the sense that for any fixed $k > 0$,
	\[\int^{T}_{1}{\left|\zeta(1 + it)\right|^{2k} \, dt} \asymp T;
\]
see \cite{Ivic} for further details.

\section{An Explicit Expression for $L_{\alpha}(x)$}
\label{sectionExpression}

Our goal in this section is to express $L_{\alpha}(x)$ in terms of a sum over the nontrivial zeroes of $\zeta(s)$, and use this explicit expression to create a limiting logarithmic distribution for $L_{\alpha}(x)/x^{1/2 - \alpha}$. We must mention that our method is effective only when limited to the range $0 \leq \alpha < 1/2$; that is, we exclude the case $\alpha = 1/2$. Consequently, our results in Sections \ref{sectionExpression}---\ref{sectionBounds} will focus only on the range $0 \leq \alpha < 1/2$.

\begin{theorem}[Perron's Formula {\cite[Theorem 5.1, Corollary 5.3]{Montgomery}}]
Let $f(n)$ be an arithmetic function whose associated Dirichlet series $\sum^{\infty}_{n=1}{f(n) n^{-s}}$ has abscissa of absolute convergence $\sigma_a \in \R$. If $\sigma_0 > \max\{\sigma_a,0\}$, $x \geq 1$, $T \geq 1$, then the summatory function $F(x) = \sum_{n \leq x}{f(n)}$ is given by
	\[F(x) = \frac{1}{2\pi i} \int^{\sigma_0 + iT}_{\sigma_0 - iT}{ \sum^{\infty}_{n=1}{\frac{f(n)}{n^s}} \frac{x^s}{s} \: ds} + E(x) + R(x,T),
\]
where $E(x) = f(x)/2$ if $x \in \N$ and $0$ otherwise, and
	\[R(x,T) \ll \sum_{\substack{x/2 < n < 2x \\ n \neq x}}{|f(n)|\min\left\{1,\frac{x}{T|x-n|}\right\}} + \frac{x^{\sigma_0}}{T} \sum^{\infty}_{n=1}{\frac{|f(n)|}{n^{\sigma_0}}}.
\]
In particular, $\lim_{T \to \infty} R(x,T) = 0$.
\end{theorem}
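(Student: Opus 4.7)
The plan is to interchange summation and integration on the right-hand side of the claimed identity, which reduces the problem to evaluating, for each $n$, the truncated discontinuous integral
\[
I(y,T) = \frac{1}{2\pi i} \int^{\sigma_0 + iT}_{\sigma_0 - iT} \frac{y^s}{s} \: ds, \qquad y = x/n.
\]
Because $\sigma_0 > \sigma_a$, the Dirichlet series $\sum f(n)/n^s$ converges absolutely and uniformly on the compact vertical segment, so Fubini justifies the interchange and the integral equals $\sum_{n=1}^{\infty} f(n) I(x/n,T)$. Everything then reduces to a sharp evaluation of $I(y,T)$ combined with a careful summation against $f(n)$.

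Next I would evaluate $I(y,T)$ in three regimes. For $y > 1$ I would shift the contour to $\Re(s) = -U$ and let $U \to \infty$: the only pole of $y^s/s$ is a simple one at $s = 0$ with residue $1$, the far-left segment vanishes since $y^{-U} \to 0$, and each horizontal segment at $\Im(s) = \pm T$ is bounded by $|y^s/s| \leq y^{\sigma}/T$ followed by $\int_{-\infty}^{\sigma_0} y^{\sigma} \, d\sigma = y^{\sigma_0}/\log y$, giving $I(y,T) = 1 + O(y^{\sigma_0}/(T|\log y|))$. For $0 < y < 1$ one closes to the right instead, encountering no poles and obtaining the same bound with main term $0$. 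For $y = 1$, parametrising $s = \sigma_0 + it$ and using antisymmetry in $t$ to kill the imaginary part yields $I(1,T) = \pi^{-1} \arctan(T/\sigma_0) = 1/2 + O(\sigma_0/T)$, which is the source of the $E(x)$ term. The trivial bound $|I(y,T)| = O(1)$, obtained by applying $|y^s| \leq y^{\sigma_0}$ on the segment, will also be needed when $y$ is close to $1$.

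Assembling the pieces, the total main-term contribution is $\sum_{n < x} f(n) + \tfrac{1}{2} f(x) \mathbf{1}_{x \in \N} = F(x) - E(x)$, which matches the claim after moving $E(x)$ across. The error splits naturally by range of $n$. For $n \leq x/2$ or $n \geq 2x$ one has $|\log(x/n)| \geq \log 2$, so the sharp estimate contributes $O((x/n)^{\sigma_0}/T)$ per term and, summed against $|f(n)|$, produces the second piece of the $R(x,T)$ bound. For $x/2 < n < 2x$ with $n \neq x$, the sharp bound $(x/n)^{\sigma_0}/(T|\log(x/n)|) \asymp x/(T|x-n|)$ must be paired with the trivial bound $1$ to yield $\min\{1, x/(T|x-n|)\}$, which is the first piece. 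The principal technical subtlety is precisely this near-diagonal range, where $\log(x/n) \asymp (x-n)/x$ degenerates and the naive discontinuous-integral estimate blows up; the $\min$ is exactly the right hybrid of the sharp and trivial bounds. Finally, $R(x,T) \to 0$ as $T \to \infty$ follows by dominated convergence: the second term decays as $T^{-1}$, while the first is a finite sum (at most $\lfloor 2x \rfloor$ terms) whose summands each tend to $0$ for $n \neq x$.
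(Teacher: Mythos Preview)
The paper does not supply a proof of Perron's formula; the theorem is simply quoted from Montgomery--Vaughan with a citation. Your proposal is precisely the standard textbook argument one finds in that reference: interchange sum and integral by absolute convergence on the segment, reduce to the truncated discontinuous integral $I(y,T)$, evaluate it by shifting the contour left or right according to the sign of $\log y$, handle $y=1$ directly, and split the resulting error sum into the near-diagonal range $x/2<n<2x$ and its complement. So there is nothing in the paper to compare against, and your outline is correct.

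One small imprecision is worth fixing. The bound $|I(y,T)|=O(1)$ does \emph{not} follow from ``applying $|y^s|\le y^{\sigma_0}$ on the segment'': on the vertical segment one has $\int_{-T}^{T}|\sigma_0+it|^{-1}\,dt\asymp\log(T/\sigma_0)$, which blows up with $T$. The correct way to obtain $|I(y,T)-\delta(y)|\ll y^{\sigma_0}$ (hence $O(1)$ for $x/2<n<2x$, where $y^{\sigma_0}\asymp 1$) is to close the contour not with horizontal rays to $\pm\infty$ but with the circular arc $|s|=R=(\sigma_0^2+T^2)^{1/2}$ through the two endpoints, going left for $y>1$ and right for $y<1$; on that arc $|y^s|\le y^{\sigma_0}$ and $|s|^{-1}=R^{-1}$, while the arc length is at most $2\pi R$, giving the bound $y^{\sigma_0}$. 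With this correction the $\min\{1,\,x/(T|x-n|)\}$ estimate and the rest of your assembly go through exactly as you describe.
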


\begin{corollary}{\label{PerronTLiou}}
Let $0 \leq \alpha < 1/2$. For $x \geq 1$, $T \geq 1$, and $\sigma_0 = 1 - \alpha + 1/\log x$, we have that
\begin{equation}\label{PerronTLioueq}
L_{\alpha}(x) = \frac{1}{2\pi i} \int^{\sigma_0 + iT}_{\sigma_0 - iT}{\frac{\zeta(2(\alpha + s))}{\zeta(\alpha + s)} \frac{x^s}{s} \: ds} + E_{\alpha}(x) + R_{\alpha}(x,T),
\end{equation}
where
	\[E_{\alpha}(x) = \begin{cases}
\displaystyle \frac{\lambda(x)}{2x^{\alpha}} & \text{if $x \in \N$,}	\\
0 & \text{otherwise,}
\end{cases}\]
and
	\[R_{\alpha}(x,T) \ll \frac{1}{x^{\alpha}} + \frac{x^{1 - \alpha} \log x}{T}.
\]
Moreover, $\lim_{T \to \infty} R_{\alpha}(x,T) = 0$.
\end{corollary}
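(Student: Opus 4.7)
The plan is to invoke Perron's formula directly with the arithmetic function $f(n) = \lambda(n)/n^{\alpha}$, whose summatory function is precisely $L_{\alpha}(x)$. The first step is to identify the associated Dirichlet series. Since $f$ is completely multiplicative (after adjusting the prime weights by $p^{-\alpha}$), comparing Euler products as in Section \ref{sectionPolya} yields
\[
\sum_{n=1}^{\infty}{\frac{\lambda(n)/n^{\alpha}}{n^{s}}} = \sum_{n=1}^{\infty}{\frac{\lambda(n)}{n^{\alpha + s}}} = \frac{\zeta(2(\alpha+s))}{\zeta(\alpha+s)}
\]
for $\Re(s) > 1 - \alpha$, so the abscissa of absolute convergence is $\sigma_{a} = 1 - \alpha$. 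Since $0 \leq \alpha < 1/2$, the prescribed $\sigma_{0} = 1 - \alpha + 1/\log x$ satisfies $\sigma_{0} > \max\{\sigma_{a}, 0\} = 1 - \alpha$ for all $x > 1$, so Perron's formula applies. The boundary term $E(x) = f(x)/2$ for $x \in \N$ is then precisely $\lambda(x)/(2x^{\alpha})$, matching the claim.

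The remaining work is to bound the Perron error $R_{\alpha}(x,T)$, which I would estimate on each of its two constituent sums. For the near-diagonal sum $\sum_{x/2 < n < 2x,\, n \neq x}{n^{-\alpha}\min\{1,\, x/(T|x-n|)\}}$, I would use $n^{-\alpha} \ll x^{-\alpha}$ throughout the range and split by whether $|x-n| < x/T$ or not. The close integers number at most $O(x/T) + O(1)$ and each contribute a min equal to $1$, giving $\ll x^{-\alpha}(x/T + 1)$; the far integers contribute $\ll (x^{1-\alpha}/T)\sum_{k \leq x}{k^{-1}} \ll x^{1-\alpha}(\log x)/T$ by comparison with a harmonic sum. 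For the tail sum $(x^{\sigma_{0}}/T)\sum_{n}{n^{-\sigma_{0} - \alpha}}$, I would use that $\sigma_{0} + \alpha = 1 + 1/\log x$, so the sum equals $\zeta(1 + 1/\log x) \ll \log x$, while $x^{\sigma_{0}} = e \cdot x^{1-\alpha}$; these combine to $\ll x^{1-\alpha}(\log x)/T$. Adding the two estimates gives $R_{\alpha}(x,T) \ll x^{-\alpha} + x^{1-\alpha}(\log x)/T$ as claimed, and the assertion $R_{\alpha}(x,T) \to 0$ as $T \to \infty$ is inherited directly from Perron's formula.

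The whole argument is routine bookkeeping and there is no conceptual obstacle; the only point worth flagging is that the $x^{-\alpha}$ summand in the error, while appearing slightly anomalous, is precisely what arises from the $O(1)$ integers of $(x/2, 2x)$ lying within distance $x/T$ of $x$ (notably the nearest integer to $x$ when $x \notin \N$), and is in any case dominated by the more useful $x^{1-\alpha}(\log x)/T$ term whenever $T \leq x\log x$, which is the regime of interest for later applications.
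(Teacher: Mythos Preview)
Your proposal is correct and follows essentially the same route as the paper: apply Perron's formula to $f(n)=\lambda(n)/n^{\alpha}$, identify the Dirichlet series as $\zeta(2(\alpha+s))/\zeta(\alpha+s)$, and bound the two error sums with the choice $\sigma_0=1-\alpha+1/\log x$. The only cosmetic difference is in the near-diagonal sum: the paper takes $\min=1$ for the single integer nearest to $x$ and uses $x/(T|x-n|)$ for all others, whereas you split at $|x-n|<x/T$; both lead to the same $x^{-\alpha}+x^{1-\alpha}(\log x)/T$ bound.
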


\begin{proof}
We take $f(n) = \lambda(n)/n^{\alpha}$. To estimate the error term, we note that for $\sigma_0 > 1 - \alpha$,
	\[R_{\alpha}(x,T) \ll \sum_{\substack{x/2 < n < 2x \\ n \neq x}}{\frac{1}{n^{\alpha}} \min\left\{1,\frac{x}{T|x-n|}\right\}} + \frac{x^{\sigma_0} \zeta(\sigma_0 + \alpha)}{T}.
\]
For this sum, we replace the minimum by its first member when $n$ is nearest to $x$, and by its second member for all other $n$, and hence
	\[\sum_{\substack{x/2 < n < 2x \\ n \neq x}}{\frac{1}{n^{\alpha}} \min\left\{1,\frac{x}{T|x-n|}\right\}}\ll \frac{1}{x^{\alpha}} + \frac{x^{1 - \alpha} \log x}{T}.
\]
We then note that $\zeta(\sigma_0) \leq 2/(\sigma_0-1)$ for $\sigma_0 > 1$, and hence for $x \geq 1$, $T \geq 1$,
	\[R_{\alpha}(x,T) \ll \frac{1}{x^{\alpha}} + \frac{x^{1 - \alpha} \log x}{T} + \frac{x^{\sigma_0}}{T(\sigma_0 + \alpha - 1)}.
\]
Choosing $\sigma_0 = 1 - \alpha + 1/\log x$ yields the result.
\end{proof}

The importance of this expression is that we can modify this integral by comparing it with integrals over certain closed curves in the complex plane. By Cauchy's residue theorem, this will allow us to express $L_{\alpha}(x)$ in terms of residues of the integrand in \eqref{PerronTLioueq}. In order to do so, however, we must bound the values of $\zeta(2(\alpha + s))/\zeta(\alpha + s)$ along this curve, for which we require the following results.

\begin{lemma}[{\cite[Corollary 10.5, Theorem 13.18, Theorem 13.23]{Montgomery}}]
Assume RH. Let $s = \sigma + it$ with $|t| \geq 1$. Then for all $\e > 0$, we have that
\begin{equation}{\label{zetauniformbounds}}
\zeta(\sigma + it) \ll \begin{cases}
t^{1/2 - \sigma + \e}	&	\text{if $0 < \sigma < 1/2$,}	\\
t^{\e}	&	\text{if $\sigma \geq 1/2$,}
\end{cases}
\end{equation}
where the implied constant is dependent only on $\e$, and for fixed small $\delta > 0$,
\begin{equation}{\label{1/zetanonuniformbounds}}
\frac{1}{\zeta(\sigma + it)} \ll \begin{cases}
t^{-1/2 + \sigma + \e}	&	\text{if $0 < \sigma \leq 1/2 - \delta$,}	\\
t^{\e}	&	\text{if $\sigma \geq 1/2 + \delta$,}
\end{cases}
\end{equation}
where the implied constant is dependent on $\delta$ and $\e$.
\end{lemma}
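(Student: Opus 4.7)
The plan is to combine three classical ingredients: the Lindelöf-type bound on $\zeta$ that follows from RH, the functional equation, and the Phragmén--Lindelöf convexity principle. I would derive the right-half bounds first, then transfer to the left half of the critical strip.

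First I would establish the upper bounds for $\zeta(\sigma + it)$. Under RH one has the Lindelöf hypothesis $\zeta(1/2 + it) \ll t^{\e}$, while on the line $\sigma = 2$ absolute convergence of the Dirichlet series gives $\zeta(2 + it) \ll 1$. Applying Phragmén--Lindelöf to the auxiliary function $(s-1)\zeta(s)$ (which is entire and hence avoids the pole issue at $s=1$) in the strip $1/2 \leq \sigma \leq 2$ produces $\zeta(\sigma + it) \ll t^{\e}$ uniformly for $|t| \geq 1$, which is the second case of \eqref{zetauniformbounds}. For $0 < \sigma < 1/2$ I would invoke the functional equation $\zeta(s) = \chi(s) \zeta(1-s)$ with $\chi(s) = 2^s \pi^{s-1} \sin(\pi s/2) \Gamma(1-s)$; Stirling yields $|\chi(\sigma + it)| \asymp (|t|/2\pi)^{1/2 - \sigma}$ uniformly for $|t| \geq 1$, and since $1 - \sigma > 1/2$, the previously-established bound gives $|\zeta(1 - \sigma - it)| \ll t^{\e}$. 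Multiplying these produces $\zeta(\sigma + it) \ll t^{1/2 - \sigma + \e}$.

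The estimate on $1/\zeta(\sigma + it)$ in the right half of the critical strip is the hardest step and uses Littlewood's theorem: under RH one has $\log \zeta(s) \ll (\log t)^{2(1-\sigma)}/\log\log t$ uniformly for $1/2 + \delta \leq \sigma \leq 1$, which is proved by integrating $\zeta'/\zeta$ along horizontal contours, using the Hadamard product, and exploiting RH to control the sum over the nontrivial zeros. The real part of this bound then gives $|1/\zeta(\sigma + it)| \ll t^{\e}$ for any fixed $\delta > 0$; the $\delta$-dependence enters through the exponent $2(1-\sigma) < 2(1/2 - \delta) < 1$ collapsing into the $t^{\e}$ envelope. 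For $\sigma \geq 1 + \delta$ the Euler product gives $1/\zeta(\sigma + it) \ll 1$, and Phragmén--Lindelöf interpolates (this time no auxiliary factor is needed, as $1/\zeta$ is holomorphic in this region by RH).

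Finally, the first case of \eqref{1/zetanonuniformbounds} is a mirror of the left-half bound for $\zeta$: writing $1/\zeta(s) = \chi(s)^{-1} / \zeta(1-s)$, using $|\chi(s)|^{-1} \asymp t^{\sigma - 1/2}$, and applying the bound just established to $1/\zeta(1 - \sigma - it)$ (valid because $1 - \sigma \geq 1/2 + \delta$) yields $|1/\zeta(\sigma + it)| \ll t^{\sigma - 1/2 + \e}$. The implied constant depends on $\delta$ solely through the second case; elsewhere $\e$ alone governs the dependence. The genuine analytic work lies entirely in the Littlewood bound on $\log\zeta$; the other three cases amount to functional-equation bookkeeping plus a convexity argument.
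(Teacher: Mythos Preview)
The paper does not supply a proof of this lemma; it is quoted directly from Montgomery--Vaughan and used as a black box. Your sketch is correct and is precisely the standard argument behind those cited theorems: RH gives the Lindel\"of bound on the critical line, Phragm\'en--Lindel\"of extends it to $\sigma \geq 1/2$, the functional equation with Stirling reflects to $0 < \sigma < 1/2$, and Littlewood's estimate $\log\zeta(s) \ll (\log t)^{2(1-\sigma)}/\log\log t$ under RH handles $1/\zeta$ for $\sigma \geq 1/2 + \delta$, with the functional equation again transferring to $\sigma \leq 1/2 - \delta$. One cosmetic slip: in the Littlewood step you write $2(1-\sigma) < 2(1/2-\delta)$, but for $\sigma \geq 1/2+\delta$ this should read $2(1-\sigma) \leq 1-2\delta$; the conclusion is unaffected.
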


\begin{lemma}[{\cite[Theorem 13.23]{Montgomery}}]
Assume RH. There exists a sequence $\TT = \{T_v\}^{\infty}_{v=1}$ with $v \leq T_v \leq v+1$ such that for all $\e > 0$ and $0 < \sigma \leq 2$,
\begin{equation}{\label{1/zetauniformbounds}}
\frac{1}{\zeta(\sigma + iT_v)} \ll T_v^{\e},
\end{equation}
and the implied constant is dependent only on $\e$.
\end{lemma}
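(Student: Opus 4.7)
The plan is to construct the sequence $\TT = \{T_v\}_{v=1}^\infty$ by choosing each $T_v \in [v,v+1]$ to lie suitably far from the imaginary parts of the nontrivial zeros of $\zeta(s)$, and then to bound $1/\zeta$ at height $T_v$ via the logarithmic derivative. By \eqref{N(T+1)}, there are only $O(\log v)$ imaginary parts $\gamma$ of zeros lying in $[v-1,v+2]$. Excluding a neighbourhood of size $1/(\log v)^2$ about each such $\gamma$ removes a set of measure $O(1/\log v) < 1$ from $[v,v+1]$, so we may choose $T_v \in [v,v+1]$ satisfying $|T_v - \gamma| \gg (\log v)^{-2}$ for every zero $\rho = 1/2 + i\gamma$ with $|\gamma - T_v| \leq 1$. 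This is the only role played by $T_v$; once constructed, the sequence is independent of $\sigma$ and $\e$.

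Next, I would invoke the standard partial-fraction expansion
\[
\frac{\zeta'}{\zeta}(s) = \sum_{|\gamma - t| \leq 1}\frac{1}{s-\rho} + O(\log|t|),
\]
valid for $-1 \leq \Re(s) \leq 2$. Setting $s = \sigma + iT_v$ and using RH together with the zero-spacing of $T_v$, each term in the sum has $|s-\rho| \geq |T_v - \gamma| \gg (\log v)^{-2}$, and by \eqref{N(T+1)} there are $O(\log v)$ such terms. For $1/2 \leq \sigma \leq 2$, integrating $\zeta'/\zeta$ along the horizontal segment from $\sigma = 2$ (where $|\log \zeta(2+iT_v)| = O(1)$) back to the desired abscissa will produce an initial polylogarithmic bound for $|\log \zeta(\sigma + iT_v)|$. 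To upgrade this crude bound to subpolynomial size, I would refine the partial-fraction estimate using RH in the form $\Re(1/(s-\rho)) = (\sigma-1/2)/|s-\rho|^2 \geq 0$, giving the sharper Littlewood-type bound $|\log \zeta(\sigma + iT_v)| \ll (\log T_v)^{2(1-\sigma)}/\log\log T_v$ in the range $1/2 \leq \sigma \leq 1$. Exponentiating then gives $1/\zeta(\sigma + iT_v) \ll \exp\bigl(C(\log T_v)/\log\log T_v\bigr) = o(T_v^\e)$ for every $\e > 0$. For $1 \leq \sigma \leq 2$, the trivial bound $1/|\zeta(\sigma + iT_v)| \ll \log\log T_v$ from \eqref{Littlewoodlower} (and its extension to the strip $1 \leq \sigma \leq 2$) is already much stronger.

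For the remaining range $0 < \sigma < 1/2$, I would apply the functional equation
\[
\zeta(s) = \chi(s)\zeta(1-s), \qquad \chi(s) = 2^s \pi^{s-1} \sin(\pi s/2)\Gamma(1-s),
\]
so that $1/\zeta(\sigma + iT_v) = \chi(\sigma + iT_v)^{-1}/\zeta(1-\sigma - iT_v)$. Standard Stirling estimates give $|\chi(\sigma + iT_v)^{-1}| \ll T_v^{\sigma - 1/2}$, while the previous paragraph yields $1/\zeta(1-\sigma - iT_v) \ll T_v^{\e}$ since $1-\sigma \in (1/2,1)$. Combining these bounds produces $1/\zeta(\sigma + iT_v) \ll T_v^\e$ uniformly in $0 < \sigma < 1/2$, completing the proof.

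The main obstacle is the uniform control in the narrow strip near the critical line: a naive application of the partial-fraction bound integrated from $\sigma = 2$ yields only $|\log \zeta(\sigma + iT_v)| \ll (\log T_v)^{O(1)}$ with exponent $\geq 1$, which exponentiates to $T_v^{O(1)}$ rather than $T_v^\e$. Refining the exponent below $1$ is precisely where RH and the careful choice of $T_v$ must be used simultaneously --- via the sign-definiteness of $\Re(1/(s-\rho))$ under RH --- rather than just the crude triangle inequality on the partial-fraction sum.
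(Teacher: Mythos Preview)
The paper does not prove this lemma; it is quoted as a black box from \cite[Theorem~13.23]{Montgomery}. Your outline --- pick $T_v\in[v,v+1]$ well-separated from the ordinates $\gamma$ by pigeonhole, control $\log\zeta$ on and to the right of the critical line via Littlewood's theorem under RH, then pass to $0<\sigma<1/2$ by the functional equation and Stirling --- is indeed the standard route, so there is no ``paper's proof'' to compare against.

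There is, however, a genuine soft spot in your handling of the narrow range $1/2\le\sigma<1/2+1/\log\log T_v$. Littlewood's bound $|\log\zeta(s)|\ll(\log\tau)^{2-2\sigma}/\log\log\tau$ is proved only for $\sigma\ge 1/2+1/\log\log\tau$; it cannot hold at $\sigma=1/2$ for arbitrary $t$, since $\zeta$ vanishes there. Your proposed fix, the positivity $\Re(1/(s-\rho))=(\sigma-1/2)/|s-\rho|^2\ge 0$, points the wrong way: integrating it yields an \emph{upper} bound on $\log|\zeta|$, whereas you need a \emph{lower} bound. What the actual argument does is compare $\log\zeta(\sigma+iT_v)$ with $\log\zeta(\sigma_0+iT_v)$ at $\sigma_0=1/2+1/\log\log T_v$ via the integrated partial-fraction formula, reducing matters to showing that
\[
\sum_{|\gamma-T_v|\le 1}\log\frac{|\sigma_0-\tfrac12+i(T_v-\gamma)|}{|\sigma-\tfrac12+i(T_v-\gamma)|}\;=\;o(\log T_v).
\]
This in turn requires a sharper choice of $T_v$ than mere separation $|T_v-\gamma|\gg(\log v)^{-2}$: one averages $\sum_{|\gamma-t|\le 1}\log(1/|t-\gamma|)$ over $t\in[v,v+1]$ and uses the RH-refined zero count $N(t+h)-N(t)\ll h\log t+\log t/\log\log t$. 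Without this step, your argument as written only delivers $1/\zeta(\sigma+iT_v)\ll T_v^{C}$ for some fixed $C$, not $T_v^{\e}$ for every $\e>0$.
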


By employing these estimates, we are able to determine our explicit expression for $L_{\alpha}(x)$ with an adequately small error term.

\begin{theorem}[cf.{} {\cite{Fawaz}}, {\cite[Equation (4)]{Mossinghoff}}, {\cite[Lemma 4]{Ng}}]{\label{ThmL(x)alongT_v}}
Assume RH and that all of the zeroes of $\zeta(s)$ are simple, and let $0 \leq \alpha < 1/2$. Then for $T_v \in \TT$ and $x \geq 1$, we have that
\begin{equation}{\label{L(x)alongT_v}}
L_{\alpha}(x) = \frac{x^{1/2 - \alpha}}{(1 - 2\alpha) \zeta(1/2)} + \sum_{|\gamma| < T_v}{\frac{\zeta(2\rho)}{\zeta'(\rho)} \frac{x^{\rho - \alpha}}{\rho - \alpha}} + E_{\alpha}(x) + I_{\alpha}(x) + R_{\alpha}(x,T_v),
\end{equation}
where for arbitrary small $0 < \e < 1/2 - \alpha$,
	\[I_{\alpha}(x) = \frac{1}{2\pi i x^{\alpha}} \int^{\e + \alpha + i\infty}_{\e + \alpha - i\infty}{\frac{\zeta(2s)}{\zeta(s)} \frac{x^s}{s - \alpha} \, ds},
\]
and
	\[R_{\alpha}(x,T_v) \ll \frac{1}{x^{\alpha}} + \frac{x^{1 - \alpha} \log x}{T_v} + \frac{x^{1 - \alpha}}{T_v^{1-\e} \log x}
\]
with the implied constant dependent on $\e$ and $\alpha$. Moreover, $\lim_{v \to \infty} R_{\alpha}(x,T_v) = 0$.
\end{theorem}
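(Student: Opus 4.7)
The plan is to apply Cauchy's residue theorem to shift the contour in Perron's formula \eqref{PerronTLioueq} from Corollary \ref{PerronTLiou}. Specifically, I would integrate the integrand $f(s) = \frac{\zeta(2(\alpha+s))}{\zeta(\alpha+s)}\frac{x^s}{s}$ around a rectangle with vertices $\sigma_0 \pm iT_v$ and $\e \pm iT_v$, where $\sigma_0 = 1 - \alpha + 1/\log x$. Since we assume RH and the simplicity of the zeroes of $\zeta(s)$, the only poles of $f$ inside this rectangle are the simple pole at $s = 1/2 - \alpha$ coming from $\zeta(2(\alpha+s))$, and simple poles at $s = \rho - \alpha$ for each nontrivial zero $\rho = 1/2 + i\gamma$ with $|\gamma| < T_v$. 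Direct residue computations yield $\Res_{s=1/2-\alpha} f(s) = \frac{x^{1/2-\alpha}}{(1-2\alpha)\zeta(1/2)}$ and $\Res_{s=\rho-\alpha} f(s) = \frac{\zeta(2\rho)}{\zeta'(\rho)}\frac{x^{\rho-\alpha}}{\rho-\alpha}$, precisely matching the two explicit terms in \eqref{L(x)alongT_v}. The positivity $\e > 0$ ensures that the simple pole of $1/s$ at $s = 0$ is not crossed, which is why no extraneous term $\zeta(2\alpha)/\zeta(\alpha)$ appears in the formula.

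The vertical segment at $\Re(s) = \e$ becomes, after the substitution $u = s + \alpha$, a truncated form of $I_\alpha(x)$. I would then extend it to the full vertical line by bounding the two tails from $\e + \alpha \pm iT_v$ out to $\e + \alpha \pm i\infty$. Since $0 < \e + \alpha < 1/2$, the bounds \eqref{zetauniformbounds} and \eqref{1/zetanonuniformbounds} combine to give $|\zeta(2u)/\zeta(u)| \ll |t|^{-c}$ at $\Re(u) = \e + \alpha$ for some $c = c(\e,\alpha) > 0$ (which also establishes that $I_\alpha(x)$ is absolutely convergent), so the tails contribute at most $O(x^\e T_v^{-c})$, which is comfortably absorbed into the stated error $R_\alpha(x,T_v)$.

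The main obstacle is bounding the two horizontal segments at $\Im(s) = \pm T_v$, and this is where the special sequence $\TT$ becomes critical. On each horizontal segment $s = \sigma + iT_v$ with $\sigma \in [\e,\sigma_0]$, I would use \eqref{zetauniformbounds} to control $\zeta(2(\alpha+s))$, and the uniform bound \eqref{1/zetauniformbounds} (valid because $T_v \in \TT$) to control $1/\zeta(\alpha+s)$, yielding $|f(\sigma+iT_v)| \ll T_v^{\eta - 1} x^\sigma$ for arbitrary small $\eta > 0$, after splitting $[\e,\sigma_0]$ into sub-ranges according to whether $\alpha+\sigma$ and $2(\alpha+\sigma)$ lie above or below $1/2$ and using trivial bounds in the region of absolute convergence. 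Integrating in $\sigma$ gives
\[
\int_\e^{\sigma_0} |f(\sigma + iT_v)| \, d\sigma \ll T_v^{\eta-1} \cdot \frac{x^{\sigma_0}}{\log x} \ll \frac{x^{1-\alpha}}{T_v^{1-\e} \log x}
\]
upon taking $\eta = \e$. Combining this with the original Perron error from Corollary \ref{PerronTLiou} gives the stated bound on $R_\alpha(x,T_v)$, and the claim $\lim_{v \to \infty} R_\alpha(x,T_v) = 0$ then follows from the corresponding limit in Perron's formula together with the obvious decay in $T_v$ of the contour-shift contributions.
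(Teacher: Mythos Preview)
Your approach is correct and matches the paper's proof essentially step for step: the paper likewise shifts the Perron contour leftward to $\Re(s)=\e+\alpha$ (after the substitution $s\mapsto s+\alpha$), picks up the residues at $s=1/2$ and at the zeroes $\rho$, extends the left edge to the full line to produce $I_\alpha(x)$, and bounds the horizontal segments using \eqref{zetauniformbounds} and \eqref{1/zetauniformbounds} on the special heights $T_v$. One small caveat: your pointwise claim $|f(\sigma+iT_v)|\ll T_v^{\eta-1}x^\sigma$ does not hold uniformly when $\alpha<1/4$ and $\sigma<1/4-\alpha$ (there the convexity bound for $\zeta(2(\alpha+s))$ only gives $T_v^{-1/2-2(\alpha+\sigma)+2\e}$), so the paper treats that sub-range separately and obtains an extra $x^{1/4-\alpha}/T_v^{1/2}$, which is nonetheless absorbed by the stated $R_\alpha(x,T_v)$.
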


Note that
	\[\frac{1}{\zeta(1/2)} = -0.6847652\ldots.
\]
It is the presence of the negative term $x^{1/2 - \alpha}/((1 - 2\alpha) \zeta(1/2))$ in \eqref{L(x)alongT_v} that leads to the negative bias of $L_{\alpha}(x)$.

\begin{proof}
By Corollary \ref{PerronTLiou}, we have that
	\[L_{\alpha}(x) = \frac{1}{2\pi i} \int^{\sigma_0 + iT}_{\sigma_0 - iT}{\frac{\zeta(2(\alpha + s))}{\zeta(\alpha + s)} \frac{x^s}{s} \: ds} + E_{\alpha}(x) + O\left(\frac{1}{x^{\alpha}} + \frac{x^{1 - \alpha} \log x}{T}\right)
\]
for $x \geq 1$ and $T \geq 1$, where $\sigma_0 = 1 - \alpha + 1/\log x$. Now
\begin{align*}
\frac{1}{2\pi i} \int^{\sigma_0 + iT}_{\sigma_0 - iT} & {\frac{\zeta(2(\alpha + s))}{\zeta(\alpha + s)} \frac{x^s}{s} \: ds}	\\
& = \frac{1}{2\pi i x^{\alpha}} \oint_{\CC_{\alpha}}{\frac{\zeta(2s)}{\zeta(s)} \frac{x^s}{s - \alpha} \: ds} - \frac{1}{2\pi i x^{\alpha}} \int^{\sigma_1 + \alpha - iT}_{\sigma_1 + \alpha + iT}{\frac{\zeta(2s)}{\zeta(s)} \frac{x^s}{s - \alpha} \: ds}	\\
& \qquad - \frac{1}{2\pi i x^{\alpha}} \left( \int^{\sigma_1 + \alpha + iT}_{\sigma_0 + \alpha + iT} + \int^{\sigma_0 + \alpha - iT}_{\sigma_1 + \alpha - iT}\right) \frac{\zeta(2s)}{\zeta(s)} \frac{x^s}{s - \alpha} \: ds,
\end{align*}
where $0 < \sigma_1 = \e < 1/2 - \alpha$, and $\CC_{\alpha}$ denotes the boundary of the rectangle with vertices $\sigma_0 + \alpha - iT, \sigma_0 + \alpha + iT, \sigma_1 + \alpha + iT, \sigma_1 + \alpha - iT$.

The singularities inside $\CC$ of this integrand occur at $s = 1/2$ and at the nontrivial zeroes of the Riemann zeta function with imaginary part bounded above and below by $T$ and $-T$ respectively. By Cauchy's residue theorem, we therefore have that
	\[\frac{1}{2\pi i x^{\alpha}} \oint_{\CC}{\frac{\zeta(2s)}{\zeta(s)} \frac{x^s}{s - \alpha} \: ds} = \frac{x^{1/2 - \alpha}}{(1 - 2\alpha) \zeta(1/2)} + \sum_{|\gamma| < T}{\frac{\zeta(2\rho)}{\zeta'(\rho)} \frac{x^{\rho - \alpha}}{\rho - \alpha}}.
\]

On the other hand,
	\[-\frac{1}{2\pi i x^{\alpha}} \int^{\sigma_1 + \alpha - iT}_{\sigma_1 + \alpha + iT}{\frac{\zeta(2s)}{\zeta(s)} \frac{x^s}{s - \alpha} \: ds} = I_{\alpha}(x) - \frac{1}{\pi i x^{\alpha}} \Re \left( \int^{\sigma_1 + \alpha + i\infty}_{\sigma_1 + \alpha + iT}{\frac{\zeta(2s)}{\zeta(s)} \frac{x^s}{s - \alpha} \: ds}\right)
\]
and
\begin{multline*}
-\frac{1}{2\pi i x^{\alpha}} \left(\int^{\sigma_1 + \alpha + iT_v}_{\sigma_0 + \alpha + iT_v} + \int^{\sigma_0 + \alpha - iT_v}_{\sigma_1 + \alpha - iT_v}\right){\frac{\zeta(2s)}{\zeta(s)} \frac{x^s}{s - \alpha} \: ds}	\\
= \frac{1}{\pi x^{\alpha}} \Im \left(\int^{\sigma_0 + \alpha + iT_v}_{\sigma_1 + \alpha + iT_v}{\frac{\zeta(2s)}{\zeta(s)} \frac{x^s}{s - \alpha} \: ds}\right).
\end{multline*}
For the former, we use \eqref{zetauniformbounds} and \eqref{1/zetanonuniformbounds} to see that for $0 < \sigma_1 + \alpha < 1/2$ and $|t| \geq 1$,
	\[\zeta(2(\sigma_1 + \alpha + it)) \ll t^{1/2 - 2(\sigma_1 + \alpha) + \e/3}, \qquad \frac{1}{\zeta(\sigma_1 + \alpha + it)} \ll t^{-1/2 + \sigma_1 + \alpha + \e/3},
\]
and hence
\begin{align*}
- \frac{1}{\pi i x^{\alpha}} \Re \left( \int^{\sigma_1 + \alpha + i\infty}_{\sigma_1 + \alpha + iT}{\frac{\zeta(2s)}{\zeta(s)} \frac{x^s}{s - \alpha} \: ds}\right) & \ll x^{\sigma_1} \int^{\infty}_{T}{t^{- 1 - (\sigma_1 + \alpha) + 2\e/3} \, dt}	\\
& \ll \frac{x^{\e}}{T^{\e/3 + \alpha}}.
\end{align*}
For the latter, we have by \eqref{zetauniformbounds} and \eqref{1/zetauniformbounds} that if $s = \sigma + iT$ with $T \in \TT$; that is, $T = T_v$ for some $v$, then for $\sigma_1 + \alpha < \sigma < 1/4$,
	\[\zeta(2(\sigma + iT_v)) \ll T_v^{1/2 - 2\sigma + \e}, \qquad \frac{1}{\zeta(\sigma + iT_v)} \ll T_v^{\e},
\]
whereas for $1/4 \leq \sigma < \sigma_0 + \alpha$,
	\[\zeta(2(\sigma + iT_v)) \ll T_v^{\e/2}, \qquad \frac{1}{\zeta(\sigma + iT_v)} \ll T_v^{\e/2},
\]
and hence
\begin{align*}
\frac{1}{\pi x^{\alpha}} \Im & \left(\int^{\sigma_0 + \alpha + iT_v}_{\sigma_1 + \alpha + iT_v}{\frac{\zeta(2s)}{\zeta(s)} \frac{x^s}{s - \alpha} \: ds}\right)	\\
& \qquad \qquad \ll \frac{T_v^{-1/2+2\e}}{x^{\alpha}} \int^{1/4}_{\sigma_1 + \alpha}{T_v^{-2\sigma} x^{\sigma} \: d\sigma} + \frac{T_v^{-1+\e}}{x^{\alpha}} \int^{\sigma_0 + \alpha}_{1/4}{x^{\sigma} \: d\sigma}	\\
& \qquad \qquad \ll \frac{x^{1/4 - \alpha}}{T_v^{1/2}} + \frac{x^{1 - \alpha}}{T_v^{1-\e} \log x}.
\end{align*}
Combining all these estimates yields the result.
\end{proof}

Taking the limit as $v$ tends to infinity in \eqref{L(x)alongT_v}, so as to eliminate the error term, we obtain a closed-form expression for $L_{\alpha}(x)$.

\begin{corollary}
Assume RH and that all of the zeroes of $\zeta(s)$ are simple, and let $0 \leq \alpha < 1/2$. For all $x \geq 1$, we have that
	\[L_{\alpha}(x) = \frac{x^{1/2 - \alpha}}{(1 - 2\alpha) \zeta(1/2)} + \lim_{v \to \infty} 2 \Re\left(\sum_{0 < \gamma < T_v}{\frac{\zeta(2\rho)}{\zeta'(\rho)} \frac{x^{\rho - \alpha}}{\rho - \alpha}}\right) + E_{\alpha}(x) + I_{\alpha}(x).
\]
\end{corollary}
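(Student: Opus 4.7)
The proof will be essentially immediate from Theorem \ref{ThmL(x)alongT_v}. The plan is to take $v \to \infty$ in the identity \eqref{L(x)alongT_v} and then combine the contributions of conjugate pairs of zeros.

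First, observe that in \eqref{L(x)alongT_v} the quantities $L_{\alpha}(x)$, $x^{1/2-\alpha}/((1-2\alpha)\zeta(1/2))$, $E_{\alpha}(x)$, and $I_{\alpha}(x)$ are all independent of $v$. Since Theorem \ref{ThmL(x)alongT_v} asserts that $R_{\alpha}(x,T_v) \to 0$ as $v \to \infty$, rearranging \eqref{L(x)alongT_v} gives
\[
\lim_{v \to \infty} \sum_{|\gamma| < T_v}{\frac{\zeta(2\rho)}{\zeta'(\rho)} \frac{x^{\rho - \alpha}}{\rho - \alpha}} = L_{\alpha}(x) - \frac{x^{1/2 - \alpha}}{(1 - 2\alpha) \zeta(1/2)} - E_{\alpha}(x) - I_{\alpha}(x),
\]
so in particular the limit on the left exists.

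Next I would pair each nontrivial zero $\rho = 1/2 + i\gamma$ with $0 < \gamma < T_v$ to its complex conjugate $\bar\rho = 1/2 - i\gamma$, which is also a zero of $\zeta(s)$ because $\zeta(s)$ has real Dirichlet coefficients. From $\zeta(\bar s) = \overline{\zeta(s)}$ one gets $\zeta'(\bar \rho) = \overline{\zeta'(\rho)}$ and $\zeta(2\bar\rho) = \overline{\zeta(2\rho)}$, while $x^{\bar\rho - \alpha} = \overline{x^{\rho - \alpha}}$ and $\bar\rho - \alpha = \overline{\rho - \alpha}$. Hence
\[
\frac{\zeta(2\bar\rho)}{\zeta'(\bar\rho)} \frac{x^{\bar\rho - \alpha}}{\bar\rho - \alpha} = \overline{\frac{\zeta(2\rho)}{\zeta'(\rho)} \frac{x^{\rho - \alpha}}{\rho - \alpha}},
\]
so that the contributions of $\rho$ and $\bar\rho$ sum to $2\Re\bigl(\zeta(2\rho)\,x^{\rho-\alpha}/(\zeta'(\rho)(\rho-\alpha))\bigr)$. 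Summing over $0 < \gamma < T_v$ converts the symmetric sum over $|\gamma| < T_v$ into twice the real part of the one-sided sum, and sending $v \to \infty$ yields the claimed formula.

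There is no substantive obstacle here: RH supplies the location of the zeros on the critical line, simplicity of the zeros ensures that $\zeta'(\rho) \neq 0$ so that each term is well-defined, and the existence of the limit follows automatically from Theorem \ref{ThmL(x)alongT_v}. The only minor point of care is that one should not assert absolute convergence of the sum over zeros (which is not established, and indeed likely fails); the corollary only claims conditional convergence along the subsequence $\{T_v\}$, exactly as delivered by the theorem.
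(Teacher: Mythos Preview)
Your proof is correct and follows essentially the same approach as the paper: take $v \to \infty$ in \eqref{L(x)alongT_v} using $R_{\alpha}(x,T_v) \to 0$, and then pair conjugate zeros via the identity $\overline{\zeta(2\rho)x^{\rho-\alpha}/(\zeta'(\rho)(\rho-\alpha))} = \zeta(2\overline{\rho})x^{\overline{\rho}-\alpha}/(\zeta'(\overline{\rho})(\overline{\rho}-\alpha))$. Your additional remark about conditional versus absolute convergence along $\{T_v\}$ is a valid and worthwhile caveat that the paper leaves implicit.
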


Here we have used the fact that
	\[\overline{\left(\frac{\zeta(2\rho)}{\zeta'(\rho)} \frac{x^{\rho}}{\rho}\right)} = \frac{\zeta(2\overline{\rho})}{\zeta'(\overline{\rho})} \frac{x^{\overline{\rho}}}{\overline{\rho}},
\]
and that if $\rho$ is a zero of $\zeta(s)$, then so is $\overline{\rho}$.

Though this expression for $L_{\alpha}(x)$ is remarkable in its simplicity, the earlier expression \eqref{L(x)alongT_v} in terms of $x$ and $T$ turns out to be more useful for applications. Unfortunately, this expression is somewhat restrictive, in the sense that we require $T$ to be a member of the sequence $\{T_v\}^{\infty}_{v=1}$. We can remove this restriction by using the hypothesis $J_{-1}(T) \ll T$. Firstly, however, we determine bounds for $I_{\alpha}(x)$.

We define the sequence $\{c(n)\}^{\infty}_{n=1}$ as being the coefficients of the Dirichlet series for $\zeta(2s-1)/\zeta(s)$, and we denote by $C(x), S(x)$ the Fresnel integrals
	\[C(x) = \int^{x}_{0}{\cos\left(\frac{\pi t^2}{2}\right) \: dt}, \qquad S(x) = \int^{x}_{0}{\sin\left(\frac{\pi t^2}{2}\right) \: dt}.
\]
In particular, we have that 
	\[\sum^{\infty}_{n=1}{\frac{|c(n)|}{n^{3/2}}} < \infty, \qquad C(x) = \frac{1}{2} + O\left(\frac{1}{x}\right), \qquad S(x) = \frac{1}{2} + O\left(\frac{1}{x}\right),
\]
where the first estimate holds from the absolute convergence of the Dirichlet series for $\zeta(2s-1)/\zeta(s)$ for $\Re(s) > 1$, while the latter two estimates are proved in \cite[\S 16.56, Equations (3)--(8)]{Watson}.

\begin{lemma}[Fawaz {\cite[Theorem 2]{Fawaz}}]{\label{Fawazlemma}}
Assume RH. Let
	\[I(x) = I_0(x) = \frac{1}{2\pi i} \int^{\sigma_1 + i\infty}_{\sigma_1 - i\infty}{\frac{\zeta(2s)}{\zeta(s)} \frac{x^s}{s} \: ds}
\]
for $0 < \sigma_1 < 1/2$ with $\sigma_1$ independent of $x$. Then
\begin{equation}\label{I(x)}
I(x) = 1 + 2 \sum^{\infty}_{n=1}{\frac{c(n)}{n}(C(\sqrt{nx}) + S(\sqrt{nx}) - 1)}.
\end{equation}
In particular, $I(x)$ is independent of $\sigma_1$, with the bounds
\begin{equation}{\label{Fawazbound}}
I(x) = 1 + O\left(\frac{1}{\sqrt{x}}\right), \qquad I\left(\frac{1}{x}\right) = O\left(\frac{1}{x^{1/2 - \e}}\right)
\end{equation}
for any $0 < \e < 1/2$ as $x$ tends to infinity.
\end{lemma}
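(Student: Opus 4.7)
The plan is to evaluate $I(x)$ by shifting the defining contour leftward from $\Re(s) = \sigma_1 \in (0,1/2)$ to $\Re(s) = -\sigma_2 \in (-1/2, 0)$. Under RH no nontrivial zeros of $\zeta(s)$ lie in this strip, and the only singularity crossed is the simple pole of $1/s$ at $s = 0$; since $\zeta(2s)/\zeta(s)$ is analytic at the origin with value $\zeta(0)/\zeta(0) = 1$, Cauchy's residue theorem contributes exactly $1$. Justifying the shift along the horizontal segments requires the standard device of choosing the height along the sequence $\{T_v\}$ of Section \ref{sectionExpression} to avoid zeros of $\zeta(s)$ and obtain polynomial control in $|t|$.

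On the shifted contour I would invoke the functional equation $\zeta(s) = \chi(s)\zeta(1-s)$ twice to write
\[
\frac{\zeta(2s)}{\zeta(s)} = \frac{\chi(2s)}{\chi(s)} \cdot \frac{\zeta(1-2s)}{\zeta(1-s)}.
\]
For $\Re(s) < 0$ both $\zeta(1-2s)$ and $1/\zeta(1-s)$ are absolutely convergent Dirichlet series, and the substitution $w = 1 - s$ shows that
\[
\frac{\zeta(1-2s)}{\zeta(1-s)} = \frac{\zeta(2w-1)}{\zeta(w)} = \sum_{n=1}^\infty \frac{c(n)}{n^w} = \sum_{n=1}^\infty c(n) \, n^{s-1}.
\]
Absolute convergence justifies interchanging sum and integral. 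The Legendre duplication formula then simplifies
\[
\frac{\chi(2s)}{\chi(s)} = \frac{2}{\sqrt{\pi}} \left(\frac{\pi}{2}\right)^{s} \cos(\pi s/2) \, \Gamma(1/2 - s),
\]
and the resulting inner integral can be evaluated by computing the Mellin transform of $C(\sqrt{y}) + S(\sqrt{y}) - 1$ directly --- via integration by parts in $y$ and a Fresnel substitution --- which identifies it with $2[C(\sqrt{nx}) + S(\sqrt{nx}) - 1]$. Assembling these pieces yields \eqref{I(x)}.

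For the bounds in \eqref{Fawazbound}, the estimate $I(x) = 1 + O(1/\sqrt{x})$ follows immediately: integration by parts in the Fresnel integrals gives $|C(y) + S(y) - 1| \ll 1/y$ for $y \geq 1$, and summing against $\sum_n |c(n)|/n^{3/2}$ (which converges, since $|c(n)| \leq \sum_{m^2 k = n} m$ is dominated by the Dirichlet coefficients of $\zeta(2s-1) \zeta(s)$) delivers the bound. The estimate $I(1/x) = O(x^{-(1/2 - \e)})$ is trickier, because a naive use of $|C(y) + S(y) - 1| \leq 1$ for $y \leq 1$ loses the cancellation among the $c(n)$ and yields only $O(1)$. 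The remedy I would take is to return to the contour definition of $I(1/x)$ and shift \emph{rightward} past the simple pole of $\zeta(2s)$ at $s = 1/2$ to $\Re(s) = 1/2 + \e$; the residue contributes $x^{-1/2}/\zeta(1/2) = O(x^{-1/2})$, and the shifted vertical integral is bounded by $O(x^{-1/2 - \e'})$ for some $\e' > 0$, using the RH-conditional bounds \eqref{zetauniformbounds} and \eqref{1/zetanonuniformbounds}.

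The principal technical obstacle will be the second bound. The integrand on $\Re(s) = 1/2 + \e$ only decays polynomially in $|t|$, so absolute convergence fails, and one must truncate at heights along the sequence $\{T_v\}$ of Section \ref{sectionExpression}, optimise the truncation in $x$, and carefully estimate the horizontal contributions, exactly as in the proof of Theorem \ref{ThmL(x)alongT_v}. An alternative route is to use Perron's formula on $\zeta(2s-1)/\zeta(s)$ together with the limit $\zeta(2s-1)/\zeta(s) \to 1/2$ as $s \to 1$ to obtain $\sum_{n \leq y} c(n)/n = 1/2 + O(y^{-(1/2-\e)})$, then to rewrite $I(1/x) = 2 \sum_n c(n)/n \, [C(\sqrt{n/x}) + S(\sqrt{n/x})]$ and estimate by splitting at $n = x$; this leans on the same RH inputs.
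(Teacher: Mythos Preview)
Your approach to the identity \eqref{I(x)} and the first bound $I(x) = 1 + O(1/\sqrt{x})$ is essentially the route taken by Fawaz; the paper simply cites his result and does not reproduce that argument. One small remark: since you are shifting from $\Re(s) = \sigma_1 < 1/2$ to $\Re(s) = -\sigma_2 < 0$, you remain a fixed positive distance from the critical line throughout, so under RH the bounds \eqref{zetauniformbounds} and \eqref{1/zetanonuniformbounds} already force the horizontal segments to vanish as the height tends to infinity --- the special sequence $\{T_v\}$ is not needed for this particular shift.

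Where you diverge from the paper is in the second bound $I(1/x) = O(x^{-(1/2-\e)})$, and here you make life considerably harder than necessary. The paper observes directly that on the original line $\Re(s) = \sigma_1$,
\[
\left| I\!\left(\frac{1}{x}\right) \right| \leq \frac{1}{2\pi x^{\sigma_1}} \int_{\sigma_1 - i\infty}^{\sigma_1 + i\infty} \frac{|\zeta(2s)|}{|s\zeta(s)|}\,|ds| \ll \frac{1}{x^{\sigma_1}},
\]
and then simply takes $\sigma_1 = 1/2 - \e$. The point you overlooked is that for $\sigma_1$ fixed just \emph{below} $1/2$, the RH bound \eqref{1/zetanonuniformbounds} gives $1/|\zeta(\sigma_1 + it)| \ll |t|^{-1/2 + \sigma_1 + \e'}$, which --- combined with $|\zeta(2\sigma_1 + 2it)| \ll |t|^{\e'}$ and the factor $1/|s|$ --- makes the integrand absolutely integrable along the vertical line. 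No contour shift past $s = 1/2$, no residue, and no truncation against a non-absolutely-convergent tail are required. Your route (or the Perron alternative you sketch) would eventually succeed, but the paper's two-line argument is both simpler and cleaner.
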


The last estimate holds as
	\[\left|I\left(\frac{1}{x}\right)\right| \leq \frac{1}{2\pi x^{\sigma_1}} \int^{\sigma_1 + i\infty}_{\sigma_1 - i\infty}{\frac{|\zeta(2s)|}{|s\zeta(s)|} \: |ds|} \ll \frac{1}{x^{\sigma_1}}
\]
as $x$ tends to infinity, and we may take $\sigma_1 = 1/2 - \e$ for any $0 < \e < 1/2$.

We must also determine bounds for
	\[I_{\alpha}(x) = \frac{1}{2\pi i x^{\alpha}} \int^{\sigma_1 + \alpha + i\infty}_{\sigma_1 + \alpha - i\infty}{\frac{\zeta(2s)}{\zeta(s)} \frac{x^s}{s - \alpha} \, ds}
\]
where $0 < \sigma_1 < 1/2 - \alpha$. We note that
	\[\frac{1}{s - \alpha} = \frac{1}{s} + \frac{\alpha}{s(s - \alpha)}
\]
and hence that
	\[I_{\alpha}(x) = \frac{I(x)}{x^{\alpha}} + \frac{\alpha}{2\pi i x^{\alpha}} \int^{\sigma_1 + \alpha + i\infty}_{\sigma_1 + \alpha - i\infty}{\frac{\zeta(2s)}{\zeta(s)} \frac{x^s}{s(s - \alpha)} \, ds}.
\]
On the other hand, we observe that for any $u > 0$,
	\[\frac{I(u)}{u^{1 + \alpha}} = \frac{1}{2\pi i} \int^{\sigma_1 + \alpha + i\infty}_{\sigma_1 + \alpha - i\infty}{\frac{\zeta(2s)}{\zeta(s)} \frac{u^{s - 1 - \alpha}}{s} \, ds},
\]
and so by integrating over $u$ from $0$ to $x$ and interchanging the order of integration, which is justified by the estimates on $I(u)$ in Lemma \ref{Fawazlemma},
	\[\int^{x}_{0}{\frac{I(u)}{u^{1 + \alpha}} \, du} = \frac{1}{2\pi i} \int^{\sigma_1 + \alpha + i\infty}_{\sigma_1 + \alpha - i\infty}{\frac{\zeta(2s)}{\zeta(s)} \frac{x^{s - \alpha}}{s(s - \alpha)} \, ds}
\]
for all $x > 0$. Thus
	\[I_{\alpha}(x) = \frac{I(x)}{x^{\alpha}} + \alpha \int^{x}_{0}{\frac{I(u)}{u^{1 + \alpha}} \, du}.
\]
In conjunction with the estimates  on $I(u)$ in Lemma \ref{Fawazlemma}, we are therefore able to bound $I_{\alpha}(x)$.

\begin{lemma}[cf.{} {\cite[Theorem 6]{Fawaz}}]{\label{Fawazalphaboundlemma}}
Assume RH, and let $0 \leq \alpha < 1/2$. Then
	\[I_{\alpha}(x) = \alpha \int^{\infty}_{0}{\frac{I(u)}{u^{1 + \alpha}} \, du} +  O\left(\frac{1}{x^{1/2 + \alpha}}\right)
\]
as $x$ tends to infinity.
\end{lemma}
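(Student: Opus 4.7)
The identity derived in the paragraphs immediately preceding the lemma,
\[
I_\alpha(x) = \frac{I(x)}{x^\alpha} + \alpha \int_0^x \frac{I(u)}{u^{1+\alpha}} \, du,
\]
already does most of the work. My plan is to extend the range of integration to $[0,\infty)$ at the cost of a tail term, then estimate both the pointwise term $I(x)/x^\alpha$ and the tail $\alpha \int_x^\infty I(u)/u^{1+\alpha}\,du$ using the asymptotic $I(u) = 1 + O(u^{-1/2})$ from Lemma \ref{Fawazlemma}, and observe that their main terms $x^{-\alpha}$ cancel.

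First I would check that the improper integral $\int_0^\infty I(u)/u^{1+\alpha}\,du$ actually converges for $0 < \alpha < 1/2$. Near $u = \infty$, the first estimate in \eqref{Fawazbound} gives $I(u) = O(1)$, so the integrand is $O(u^{-1-\alpha})$, integrable at infinity. Near $u = 0^+$, the second estimate $I(1/x) = O(x^{-1/2+\e})$ rewrites as $I(u) = O(u^{1/2-\e})$, giving integrand $O(u^{-1/2-\alpha-\e})$; choosing $\e$ small enough that $\alpha + \e < 1/2$, which is possible because $\alpha < 1/2$, makes this integrable at $0$. Note that convergence at infinity forces $\alpha > 0$ strictly; the stated range $0 \leq \alpha < 1/2$ is really $0 < \alpha < 1/2$ for the constant term to be finite.

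With convergence in hand, I would write
\[
I_\alpha(x) = \alpha \int_0^\infty \frac{I(u)}{u^{1+\alpha}}\,du + \frac{I(x)}{x^\alpha} - \alpha \int_x^\infty \frac{I(u)}{u^{1+\alpha}}\,du,
\]
and estimate the last two terms. Using $I(u) = 1 + O(u^{-1/2})$ directly, the pointwise term is $I(x)/x^\alpha = x^{-\alpha} + O(x^{-1/2-\alpha})$, while for the tail
\[
\alpha \int_x^\infty \frac{I(u)}{u^{1+\alpha}}\,du = \alpha \int_x^\infty \frac{du}{u^{1+\alpha}} + O\!\left(\int_x^\infty \frac{du}{u^{3/2+\alpha}}\right) = x^{-\alpha} + O(x^{-1/2-\alpha}).
\]
Subtracting, the leading terms $x^{-\alpha}$ cancel and the remainder is $O(x^{-1/2-\alpha})$, which is precisely the claimed bound.

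There is no real obstacle here: the argument is a routine asymptotic rearrangement, and all the analytic heavy lifting has already been absorbed into the identity preceding the lemma and into Lemma \ref{Fawazlemma}. The only fine point is integrability at $u = 0$, which relies on the less familiar second bound in \eqref{Fawazbound}, itself obtained by shifting the Mellin contour for $I(u)$ close to the critical line.
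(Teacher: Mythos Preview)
Your argument is correct and is exactly the route the paper has in mind: the identity $I_\alpha(x)=I(x)/x^\alpha+\alpha\int_0^x I(u)\,u^{-1-\alpha}\,du$ is derived in the text, and the lemma is then stated without proof, with the remark that it follows ``in conjunction with the estimates on $I(u)$ in Lemma~\ref{Fawazlemma}''; your extension of the integral to $[0,\infty)$ and cancellation of the $x^{-\alpha}$ terms is precisely how those estimates are meant to be applied. Your observation about the endpoint $\alpha=0$ is a genuine catch: the integral $\int_0^\infty I(u)\,u^{-1}\,du$ diverges at infinity, so the constant in the statement is an indeterminate $0\cdot\infty$; the formula is still correct in the limit $\alpha\to 0^+$ (the limit equals $1$, matching $I(x)=1+O(x^{-1/2})$), and in any case the only downstream use is the bound $I_\alpha(x)=O(1)$, which for $\alpha=0$ follows directly from \eqref{Fawazbound}.
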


\begin{corollary}[cf.{} {\cite[Lemma 5]{Ng}}]{\label{CorL(x)alongT}}
Assume RH and $J_{-1}(T) \ll T$, and let $0 \leq \alpha < 1/2$. Then for all $T \geq 1$ and $x \geq 1$, we have that
\begin{equation}{\label{L(x)arbitraryT}}
L_{\alpha}(x) = \frac{x^{1/2 - \alpha}}{(1 - 2\alpha) \zeta(1/2)} + \sum_{|\gamma| < T}{\frac{\zeta(2\rho)}{\zeta'(\rho)} \frac{x^{\rho - \alpha}}{\rho - \alpha}} + R_{\alpha}(x,T),
\end{equation}
where for arbitrary small $0 < \e < 1/2 - \alpha$,
\begin{equation}{\label{newerror}}
R_{\alpha}(x,T) \ll 1 + \frac{x^{1 - \alpha} \log x}{T} + \frac{x^{1 - \alpha}}{T^{1-\e} \log x}.
\end{equation}
\end{corollary}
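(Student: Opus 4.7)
The plan is to bootstrap from Theorem~\ref{ThmL(x)alongT_v}, which gives the desired expansion along the discrete sequence $\TT$, to arbitrary $T \geq 1$ by choosing a nearby $T_v \in \TT$ and absorbing the discrepancies into the new error term $R_\alpha(x, T)$ of \eqref{newerror}. Given $T \geq 1$, the constraint $v \leq T_v \leq v + 1$ defining $\TT$ allows me to select $T_v$ with $|T - T_v| \leq 1$. Substituting into Theorem~\ref{ThmL(x)alongT_v} leaves three things to absorb: the additive terms $E_\alpha(x) + I_\alpha(x)$, the old error $R_\alpha(x, T_v)$, and the difference $\sum_{|\gamma| < T_v} - \sum_{|\gamma| < T}$ of the residue sums. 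The first is immediate: $|E_\alpha(x)| \leq 1/2$ by definition, and $I_\alpha(x) = O(1)$ by Lemma~\ref{Fawazalphaboundlemma}. The second is also routine, since $T_v \asymp T$ for $T \geq 2$, with the bounded range $T \in [1, 2]$ handled trivially using the unconditional bound $L_\alpha(x) \ll x^{1-\alpha}$.

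The substantive step is the difference of residue sums, whose absolute value is bounded by
\[
x^{1/2-\alpha} \sum_{T - 1 \leq |\gamma| \leq T + 1} \frac{|\zeta(2\rho)|}{|\rho - \alpha| |\zeta'(\rho)|}.
\]
Applying the Cauchy--Schwarz inequality, combined with the zero density estimate \eqref{N(T+1)} for the count of zeros and the mean-square bound \eqref{J_{-1}(2T) - J_{-1}(T)} (both derived in Section~\ref{sectionmoments} from the hypothesis $J_{-1}(T) \ll T$), the short-interval sum is $\ll \sqrt{\log T}\log\log T/\sqrt{T}$. Hence the difference of residue sums is $\ll x^{1/2-\alpha}\sqrt{\log T}\log\log T/\sqrt{T}$.

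The main technical obstacle is verifying that this bound fits into the claimed error, namely
\[
\frac{x^{1/2 - \alpha}\sqrt{\log T}\log\log T}{\sqrt{T}} \ll_\e 1 + \frac{x^{1-\alpha}\log x}{T} + \frac{x^{1-\alpha}}{T^{1-\e}\log x}
\]
for all $x, T \geq 1$. I would prove this by a case split on $U := x^{1/2-\alpha}/\sqrt{T}$: when $U \leq 1/(\sqrt{\log T}\log\log T)$ the left side is $O(1)$; otherwise $T \ll x^{1-2\alpha}\log T(\log\log T)^2$, and an AM-GM argument shows that the middle term of the right side dominates when $T \ll (\log x)^{2/\e}$, while the third term dominates otherwise, where the $T^{1-\e}$ savings is precisely what makes the inequality go through. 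The implicit constant will depend on $\e$, as permitted by the statement; the delicacy lies in structuring the AM-GM split so that the $T^{1-\e}$ gain is exploited efficiently.
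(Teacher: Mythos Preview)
Your proposal is correct and follows essentially the same route as the paper: both start from Theorem~\ref{ThmL(x)alongT_v}, absorb $E_\alpha(x)$ and $I_\alpha(x)$ as $O(1)$, and bound the discrepancy between $\sum_{|\gamma|<T_v}$ and $\sum_{|\gamma|<T}$ via Cauchy--Schwarz combined with \eqref{N(T+1)} and \eqref{J_{-1}(2T) - J_{-1}(T)} to obtain $\ll x^{1/2-\alpha}(\log T)^{1/2}\log\log T/\sqrt{T}$. The paper simply asserts that this last term ``is dominated by the other error terms,'' whereas you spell out the verification; your case split is sound (indeed, writing $b$ for the left side, $b\le 1+b^2$ and then $b^2\le x^{1-\alpha}T^{-1}\log T(\log\log T)^2$, with $\log T(\log\log T)^2\ll T^{\e/2}\le \log x + T^\e/\log x$ by AM--GM, gives a clean route).
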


\begin{proof}
By \eqref{L(x)alongT_v}, we have that
	\[L_{\alpha}(x) = \frac{x^{1/2 - \alpha}}{(1 - 2\alpha) \zeta(1/2)} + \sum_{|\gamma| < T_v}{\frac{\zeta(2\rho)}{\zeta'(\rho)} \frac{x^{\rho - \alpha}}{\rho - \alpha}} + E_{\alpha}(x) + I_{\alpha}(x) + R_{\alpha}(x,T_v),
\]
where $\{T_v\}^{\infty}_{v=1}$ is a sequence satisfying $v \leq T_v \leq v+1$. We first deal with $E_{\alpha}(x)$ and $I_{\alpha}(x)$: these are both bounded by a constant. Next, for $T \geq 1$ with $v \leq T_v \leq T \leq v+1$, we have that
	\[L_{\alpha}(x) = \frac{x^{1 - \alpha}}{(1 - 2\alpha) \zeta(1/2)} + \sum_{|\gamma| < T}{\frac{\zeta(2\rho)}{\zeta'(\rho)} \frac{x^{\rho - \alpha}}{\rho - \alpha}} - \sum_{T_v \leq |\gamma| < T}{\frac{\zeta(2\rho)}{\zeta'(\rho)} \frac{x^{\rho - \alpha}}{\rho - \alpha}} + R_{\alpha}(x,T_v)
\]
with $R_{\alpha}(x,T_v)$ now as in \eqref{newerror}. By the Cauchy--Schwarz inequality, \eqref{J_{-1}(2T) - J_{-1}(T)}, and \eqref{N(T+1)}, we have that
\begin{align*}
\left|\sum_{T_v \leq |\gamma| < T}{\frac{\zeta(2\rho)}{\zeta'(\rho)} \frac{x^{\rho - \alpha}}{\rho - \alpha}}\right| & \leq x^{1/2 - \alpha} \left(\sum_{T_v \leq |\gamma| < T}{\frac{|\zeta(2\rho)|^2}{(|\rho - \alpha| |\zeta'(\rho)|)^2}}\right)^{1/2} \left(\sum_{T_v \leq |\gamma| < T}{1}\right)^{1/2}	\\
& \hspace{-1cm} \leq x^{1/2 - \alpha} \left(\sum_{T/2 \leq |\gamma| < T}{\frac{|\zeta(2\rho)|^2}{(|\rho - \alpha| |\zeta'(\rho)|)^2}}\right)^{1/2} (N(T) - N(T-1))^{1/2}	\\
& \hspace{-1cm} \ll \frac{x^{1/2 - \alpha} (\log T)^{1/2} \log \log T}{\sqrt{T}},
\end{align*}
and it remains to note that this error term is dominated by the other error terms.
\end{proof}

\section{The Existence of a Limiting Logarithmic Distribution}
\label{sectionlimitingdistribution}

The goal of the next three sections is to prove the following central theorem concerning $L_{\alpha}(x)/x^{1/2 - \alpha}$, in a similar vein to results of Rubinstein and Sarnak \cite{Rubinstein} and Ng \cite{Ng}.

\begin{theorem}\label{limitingdistribution}
Assume RH, LI, and $J_{-1}(T) \ll T$, and let $0 \leq \alpha < 1/2$. Then the function $L_{\alpha}(x)/x^{1/2 - \alpha}$ has a limiting logarithmic distribution. That is, there exists a probability measure $\nu_{\alpha}$ on $\R$ such that
	\[\lim_{X \to \infty} \frac{1}{\log X} \int\limits_{\left\{x \in [1,X] : L_{\alpha}(x)/x^{1/2 - \alpha} \in B\right\}}{\: \frac{dx}{x}} = \nu_{\alpha}(B)
\]
for every Borel set $B \subset \R$ whose boundary has Lebesgue measure zero. Furthermore, the mean and median of $\nu_{\alpha}$ are equal and given by
	\[\mu_{\alpha} = \frac{1}{(1 - 2\alpha) \zeta(1/2)},
\]
while the variance of $\nu_{\alpha}$ is
	\[\sigma_{\alpha}^2 = 2 \sum_{\gamma > 0}{\frac{|\zeta(2\rho)|^2}{(|\rho - \alpha| |\zeta'(\rho)|)^2}}.
\]
\end{theorem}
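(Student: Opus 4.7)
The plan is to adapt the Rubinstein--Sarnak framework as refined by Ng, working in the logarithmic variable $y = \log x$ and the normalisation
\[\phi_{\alpha}(y) = \frac{L_{\alpha}(e^{y})}{e^{(1/2 - \alpha)y}}.\]
Under RH, Corollary \ref{CorL(x)alongT} rewrites this as
\[\phi_{\alpha}(y) = \mu_{\alpha} + 2 \Re \sum_{0 < \gamma < T}{\frac{\zeta(2\rho)}{\zeta'(\rho)(\rho - \alpha)} e^{i\gamma y}} + \frac{R_{\alpha}(e^{y},T)}{e^{(1/2 - \alpha)y}},\]
which is the almost-periodic skeleton that drives the argument.

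I would first construct the candidate measure $\nu_{\alpha}$ as the law of the random variable
\[X_{\alpha} = \mu_{\alpha} + 2 \Re \sum_{\gamma > 0}{\frac{\zeta(2\rho)}{\zeta'(\rho)(\rho - \alpha)} Z_{\gamma}},\]
where the $Z_{\gamma}$ are independent and uniformly distributed on the unit circle in $\C$. The hypothesis $J_{-1}(T) \ll T$ plays its first critical role here: combining \eqref{J_{-1}(2T) - J_{-1}(T)} dyadically gives $\sum_{\gamma > T}{|\zeta(2\rho)|^{2}/(|\rho - \alpha||\zeta'(\rho)|)^{2}} \ll (\log \log T)^{2}/T \to 0$, which is exactly the variance of the tail of the series defining $X_{\alpha}$. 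The series therefore converges in $L^{2}$, and $\nu_{\alpha}$ is a well-defined probability measure on $\R$.

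To show weak convergence of $\nu_{\alpha}^{Y}(B) = Y^{-1} \int_{0}^{Y}{\mathbf{1}_{B}(\phi_{\alpha}(y)) \, dy}$ to $\nu_{\alpha}$, I would combine three ingredients. First, for fixed $T$, the truncation $\phi_{\alpha}^{T}(y) = \mu_{\alpha} + 2 \Re \sum_{0 < \gamma < T}{\zeta(2\rho) e^{i\gamma y}/(\zeta'(\rho)(\rho - \alpha))}$ is a trigonometric polynomial whose frequencies $\gamma_{1},\dots,\gamma_{N(T)}$ are $\Q$-linearly independent by LI; the Kronecker--Weyl equidistribution theorem then yields $\lim_{Y \to \infty} Y^{-1}\int_{0}^{Y}{g(\phi_{\alpha}^{T}(y)) \, dy} = \E[g(X_{\alpha}^{T})]$ for every bounded continuous $g$, where $X_{\alpha}^{T}$ is the corresponding partial sum of $X_{\alpha}$. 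Second, $X_{\alpha}^{T} \to X_{\alpha}$ in $L^{2}$ from the tail bound above. Third, the explicit formula delivers the mean-square estimate
\[\frac{1}{Y} \int_{0}^{Y}{\left|\phi_{\alpha}(y) - \phi_{\alpha}^{T}(y)\right|^{2} dy} = \frac{1}{Y}\int_{0}^{Y}{\left|\frac{R_{\alpha}(e^{y},T)}{e^{(1/2 - \alpha)y}}\right|^{2} dy} \ll \frac{1}{Y} + \frac{e^{Y} Y}{T^{2}} + \frac{e^{Y}}{T^{2 - 2\e} Y^{3}}\]
by \eqref{newerror}, and choosing $T = T(Y)$ to grow sufficiently quickly (e.g.\ $T = e^{Y}$) drives this to zero. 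A three-epsilon argument against Lipschitz test functions, combined with Chebyshev's inequality, then promotes these three ingredients to weak convergence on continuity sets of $\nu_{\alpha}$, which are precisely the Borel sets with $\nu_{\alpha}$-null boundary.

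Finally, the statistics of $\nu_{\alpha}$ are read off from $X_{\alpha}$: since $\E[Z_{\gamma}] = 0$, the mean is $\mu_{\alpha}$, and a short calculation using $\E[|Z_{\gamma}|^{2}] = 1$ together with $\E[Z_{\gamma}^{2}] = 0$ gives $\E[(2\Re(a Z_{\gamma}))^{2}] = 2|a|^{2}$ for any $a \in \C$, whence independence produces the asserted variance $\sigma_{\alpha}^{2}$. Because $-Z_{\gamma}$ is equidistributed with $Z_{\gamma}$, each summand is symmetric about zero, so $X_{\alpha} - \mu_{\alpha}$ is symmetric and the median of $\nu_{\alpha}$ coincides with the mean. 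The main obstacle will be the third step: simultaneously controlling $T$ and $Y$ so that Kronecker--Weyl equidistribution (which is only pointwise in $T$) remains usable as $T$ is forced to grow with $Y$ to kill the Perron error. This is precisely the juncture where the hypothesis $J_{-1}(T) \ll T$, by ensuring summability of the arithmetic coefficients $\zeta(2\rho)/(\zeta'(\rho)(\rho - \alpha))$ in $L^{2}$, is indispensable.
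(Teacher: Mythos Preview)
Your outline has the right architecture, but your third step contains a genuine gap that you yourself flag without resolving. The displayed mean-square estimate
\[\frac{1}{Y}\int_{0}^{Y}\left|\frac{R_{\alpha}(e^{y},T)}{e^{(1/2-\alpha)y}}\right|^{2}dy \ll \frac{1}{Y} + \frac{e^{Y}Y}{T^{2}} + \frac{e^{Y}}{T^{2-2\e}Y^{3}}\]
blows up in $Y$ for any fixed $T$, so the only way to kill it is to let $T$ grow like $e^{Y}$. But then Kronecker--Weyl is being applied to a trigonometric polynomial with roughly $N(e^{Y})\asymp Y e^{Y}$ frequencies, and the theorem gives no rate: there is no reason the equidistribution error should be small on the scale $Y$. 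The three-epsilon argument collapses because your three approximations cannot be made small at a common pair $(T,Y)$.

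The paper's resolution is to decouple the Perron cutoff from the zero-sum cutoff. One applies Corollary~\ref{CorL(x)alongT} with the cutoff set equal to the integration endpoint $X$, so that the Perron remainder $R_{\alpha}(x,X)/x^{1/2-\alpha}$ is genuinely negligible for $x\le X$, and then splits the resulting zero sum at a \emph{fixed} height $T$:
\[\e_{\alpha}^{(T)}(x) = \sum_{T\le |\gamma| < X}\frac{\zeta(2\rho)}{\zeta'(\rho)}\frac{x^{i\gamma}}{\rho-\alpha} + \frac{R_{\alpha}(x,X)}{x^{1/2-\alpha}}.\]
The real work is then Lemma~\ref{keyboundlemma}, which uses $J_{-1}(T)\ll T$ not merely to get $\ell^{2}$-summability of the coefficients but to prove a mean-square bound on the oscillatory tail that is \emph{uniform in $X$}:
\[\int_{Z}^{eZ}\Bigl|\sum_{T\le |\gamma| < X}\frac{\zeta(2\rho)}{\zeta'(\rho)}\frac{x^{i\gamma}}{\rho-\alpha}\Bigr|^{2}\frac{dx}{x}\ll \frac{\log T\,(\log\log T)^{2}}{T^{1/4}}.\]
Summing dyadically gives $\limsup_{X\to\infty}(\log X)^{-1}\int_{1}^{X}|\e_{\alpha}^{(T)}(x)|^{2}\,dx/x \ll (\log T)(\log\log T)^{2}/T^{1/4}$, which tends to zero as $T\to\infty$. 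Now the three-epsilon argument works with $T$ held fixed while $Y\to\infty$, and only afterwards $T\to\infty$. This is precisely the missing ingredient you allude to in your final paragraph; the pointwise bound \eqref{newerror} on $R_{\alpha}(x,T)$ is not a substitute for it.

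A smaller point: you conclude convergence on continuity sets of $\nu_{\alpha}$, i.e.\ Borel sets with $\nu_{\alpha}$-null boundary, but the theorem asks for Borel sets with \emph{Lebesgue}-null boundary. To pass between these you must show $\nu_{\alpha}$ is absolutely continuous. The paper does this via the explicit Bessel-product formula \eqref{nutransform} for $\widehat{\nu_{\alpha}}$ and the decay estimate of Lemma~\ref{nudecay}; your symmetry argument for the median is fine, but it does not by itself give absolute continuity.
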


Taking $B = (-\infty, 0]$ and using the fact that $1/\zeta(1/2) < 0$ yields the following corollary, which gives a more quantitative description of the negative bias of $L_{\alpha}(x)$.

\begin{corollary}
Assume RH, LI, and $J_{-1}(T) \ll T$, and let $0 \leq \alpha < 1/2$. Then
\begin{equation}{\label{mainequation}}
\frac{1}{2} \leq \delta(P_{\alpha}) \leq 1.
\end{equation}
\end{corollary}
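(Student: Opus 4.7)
The plan is to deduce this corollary as a direct consequence of Theorem \ref{limitingdistribution} applied to the Borel set $B = (-\infty, 0]$, whose boundary $\{0\}$ has Lebesgue measure zero, and then to exploit the classical fact that $\zeta(1/2) < 0$ to force the median of $\nu_\alpha$ into the negative half-line.

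First, for $x \geq 1$ the sign of $L_\alpha(x)$ agrees with the sign of $L_\alpha(x)/x^{1/2-\alpha}$, so
\[
P_\alpha = \left\{x \in [1,\infty) : \frac{L_\alpha(x)}{x^{1/2-\alpha}} \in (-\infty,0]\right\}.
\]
Theorem \ref{limitingdistribution} therefore yields $\delta(P_\alpha) = \nu_\alpha((-\infty,0])$, and the upper bound $\delta(P_\alpha) \leq 1$ is immediate since $\nu_\alpha$ is a probability measure.

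For the lower bound, I would use that $\zeta(1/2) = -1.4603\ldots < 0$ and that $1 - 2\alpha > 0$ throughout the range $0 \leq \alpha < 1/2$, so that the mean of $\nu_\alpha$,
\[
\mu_\alpha = \frac{1}{(1-2\alpha)\zeta(1/2)},
\]
is strictly negative. By Theorem \ref{limitingdistribution} the median $m_\alpha$ of $\nu_\alpha$ equals $\mu_\alpha$, hence $m_\alpha < 0$. The defining property of the median gives $\nu_\alpha((-\infty,m_\alpha]) \geq 1/2$, and since $(-\infty,m_\alpha] \subseteq (-\infty,0]$, monotonicity of $\nu_\alpha$ forces $\delta(P_\alpha) = \nu_\alpha((-\infty,0]) \geq 1/2$.

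There is essentially no obstacle in this step, since the heavy lifting is carried out in Theorem \ref{limitingdistribution}; the corollary is merely a translation of the mean/median data for $\nu_\alpha$ into a statement about logarithmic densities, together with the observation that $\zeta(1/2)$ is negative.
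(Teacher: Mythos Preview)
Your proof is correct and follows exactly the route the paper indicates: apply Theorem \ref{limitingdistribution} with $B = (-\infty,0]$, note that its boundary $\{0\}$ is Lebesgue-null, and then use that the median $\mu_\alpha = 1/((1-2\alpha)\zeta(1/2))$ is negative because $\zeta(1/2) < 0$. The paper records only the one-line hint ``Taking $B = (-\infty,0]$ and using the fact that $1/\zeta(1/2) < 0$ yields the following corollary''; your write-up simply fills in those details.
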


We perhaps ought to justify why we assume the three conjectures in Theorem \ref{limitingdistribution}. It ought to be clear by now that these three conjectures have significant sway on the behaviour of $L_{\alpha}(x)$. Theorem \ref{Riemannfalselowerbounds} shows that, at the very least, violations of the Riemann hypothesis lead to a somewhat quantitative resolution of P\'{o}lya's conjecture. As far as the Linear Independence hypothesis goes, Martin and Ng \cite{Martin1}, working with limiting distributions related to prime number races, claim that such limiting distributions exist under significantly weaker conditions. More precisely, they require only the existence of some $\e > 0$ such that for each sufficiently large $T$, there exist at least $\e T /\log T$ nontrivial zeroes of $\zeta(s)$ with imaginary parts positive and bounded by $T$ such that these imaginary parts satisfy no linear relation over the rationals. As asymptotically there are $(1/2\pi) T \log T$ nontrivial zeroes of $\zeta(s)$ with positive imaginary part bounded above by $T$, this suggests we only require a comparatively sparse set of zeroes of $\zeta(s)$ to have linearly independent imaginary parts. Finally, as for the assumption that $J_{-1}(T) \ll T$, we shall see in Lemma \ref{logmeansquare} its key application towards Theorem \ref{limitingdistribution}. In this case, it certainly seems possible to rely on slightly weaker assumptions, but it is not immediately clear which assumptions would suffice; cf.{} \cite[pp.137--138]{Ng2}.

Our starting point for proving Theorem \ref{limitingdistribution} is the explicit expression \eqref{L(x)arbitraryT} for $L_{\alpha}(x)$. By this and using the fact that $x^{\rho - \alpha} = x^{1/2 - \alpha + i \gamma}$ under the Riemann hypothesis, we observe that for $x \geq 1$ and $1 \leq T < X$ we can write
	\[\frac{L_{\alpha}(x)}{x^{1/2 - \alpha}} = E_{\alpha}^{(T)}(x) + \e_{\alpha}^{(T)}(x),
\]
where
	\[E_{\alpha}^{(T)}(x) = \frac{1}{(1 - 2\alpha) \zeta(1/2)} + \sum_{|\gamma| < T}{\frac{\zeta(2\rho)}{\zeta'(\rho)} \frac{x^{i \gamma}}{\rho - \alpha}}
\]
and
\begin{equation}{\label{error}}
\e_{\alpha}^{(T)}(x) = \sum_{T \leq |\gamma| < X}{\frac{\zeta(2\rho)}{\zeta'(\rho)} \frac{x^{i \gamma}}{\rho - \alpha}} + R_{\alpha}(x,X)
\end{equation}
with
	\[R_{\alpha}(x,X) \ll \frac{1}{x^{1/2 - \alpha}} + \frac{\sqrt{x} \log x}{X} + \frac{\sqrt{x}}{X^{1-\e} \log x}
\]
for arbitrary small $\e > 0$. This decomposition of $L_{\alpha}(x)/x^{1/2 - \alpha}$ into two parts, $E_{\alpha}^{(T)}(x)$ and $\e_{\alpha}^{(T)}(x)$, is crucial in creating a limiting distribution for $L_{\alpha}(x)/x^{1/2 - \alpha}$. Our error part $\e_{\alpha}^{(T)}(x)$ is chosen such that its logarithmic mean square is suitably small; that is, the quantity
	\[\frac{1}{\log X}\int^{X}_{1}{|\e_{\alpha}^{(T)}(x)|^2 \: \frac{dx}{x}}
\]
is uniformly bounded in $X$ and tends to zero as $T$ tends to infinity, as we shall see in Lemma \ref{logmeansquare}.

Our first step in constructing our limiting distribution is the following more general result.

\begin{lemma}[Kronecker--Weyl]{\label{subtorus}}
Let $t_1, \ldots, t_n$ be arbitrary real numbers. Then the topological closure $H$ in the $n$-torus $\T^n$ of the set
	\[\widetilde{H} = \left\{\left(e^{2\pi i t_1 y}, \ldots, e^{2\pi i t_n y}\right) \in \T^n : y \in \R\right\}
\]
is an $r$-dimensional subtorus of $\T^n$, where $0 \leq r \leq n$ is the dimension over $\Q$ of the span of $t_1, \ldots, t_n$. Moreover, for any continuous function $g : \T^n \to \C$, we have that
	\[\lim_{Y \to \infty} \frac{1}{Y} \int^{Y}_{0}{g\left(e^{2\pi i t_1 y}, \ldots, e^{2\pi i t_n y}\right) \: dy} = \int_{H}{g(z) \: d\mu_H(z)},
\]
where $\mu_H$ is the normalised Haar measure on $H$.
\end{lemma}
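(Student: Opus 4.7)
The plan is to prove this as a continuous Kronecker--Weyl equidistribution theorem via Pontryagin duality and Weyl's criterion. First I would observe that $\widetilde{H}$ is the image of the continuous homomorphism $\varphi: \R \to \T^n$ given by $\varphi(y) = (e^{2\pi i t_1 y}, \ldots, e^{2\pi i t_n y})$, hence $\widetilde{H}$ is a subgroup of $\T^n$ and its closure $H$ is a closed subgroup. By the structure theorem for closed subgroups of the torus, $H$ is isomorphic to $\T^r \times F$ for some finite abelian group $F$ and integer $0 \leq r \leq n$; but since $\widetilde{H}$ is the continuous image of the connected space $\R$, its closure is connected, and so $H$ is itself a subtorus of dimension $r$.

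To identify $r$, I would compute the annihilator $H^{\perp} = \{k \in \Z^n : \chi_k|_H \equiv 1\}$ of $H$ under Pontryagin duality, where $\chi_k(z_1, \ldots, z_n) = z_1^{k_1} \cdots z_n^{k_n}$. A character $\chi_k$ is trivial on $\widetilde{H}$ (equivalently on $H$, by continuity) if and only if $e^{2\pi i(k_1 t_1 + \cdots + k_n t_n)y} = 1$ for all $y \in \R$, i.e.\ $k_1 t_1 + \cdots + k_n t_n = 0$. Thus $H^{\perp}$ is the lattice of integer linear relations among the $t_j$, which has rank $n - r$ where $r$ is the $\Q$-dimension of the span of $t_1, \ldots, t_n$ (any $\Q$-linear relation can be cleared of denominators). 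By duality, $\dim H = n - \operatorname{rank}(H^{\perp}) = r$.

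For the second assertion I would use Weyl's criterion. By the Stone--Weierstrass theorem, finite linear combinations of the characters $\chi_k$ are uniformly dense in $C(\T^n)$, so by the uniform boundedness of both sides by $\|g\|_{\infty}$ it suffices to verify the identity for each $g = \chi_k$. The left-hand side becomes
\[
\lim_{Y \to \infty} \frac{1}{Y} \int_0^Y e^{2\pi i (k_1 t_1 + \cdots + k_n t_n) y} \, dy,
\]
which equals $1$ when $k \in H^{\perp}$ and $0$ otherwise, since the integral is bounded independently of $Y$ when the exponent is nonzero. The right-hand side $\int_H \chi_k \, d\mu_H$ equals $1$ when $\chi_k|_H \equiv 1$, i.e.\ $k \in H^{\perp}$, and equals $0$ otherwise by the orthogonality of distinct characters on the compact group $H$. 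The two agree on every character, which completes the proof.

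The only delicate point is the identification of the dimension; everything else is a standard Fourier-analytic argument on a compact abelian group. The passage from $\R$ to $[0,Y]$ via Cesàro averages, and the extension from trigonometric polynomials to continuous functions via uniform approximation, are routine once the character computation is in hand.
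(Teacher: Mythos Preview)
Your proof is correct and follows the standard route for establishing the continuous Kronecker--Weyl theorem: identify $H$ as a connected closed subgroup (hence a subtorus), compute its dimension via the annihilator in $\Z^n$, and verify equidistribution character by character using Weyl's criterion and Stone--Weierstrass. The paper itself states this lemma without proof, treating it as a classical result (in the spirit of its use by Rubinstein--Sarnak and Ng), so there is no proof in the paper to compare against; your argument supplies exactly what a reader would need to fill in.
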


\begin{lemma}[cf.{} {\cite[Lemma 8]{Ng}}, {\cite[Lemma 2.3]{Rubinstein}}]
Assume RH and $J_{-1}(T) \ll T$, and let $0 \leq \alpha < 1/2$. For each $T \geq 1$, there exists a probability measure $\nu_{\alpha,T}$ on $\R$ such that
	\[\lim_{X \to \infty} \frac{1}{\log X} \int^{X}_{1}{f\left(E_{\alpha}^{(T)}(x)\right) \: \frac{dx}{x}} = \int_{\R}{f(x) \: d\nu_{\alpha,T}(x)}
\]
for all continuous functions $f$ on $\R$.
\end{lemma}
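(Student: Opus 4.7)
The plan is to realise $E_\alpha^{(T)}$ as a continuous function on a finite-dimensional torus evaluated along a one-parameter flow, and then invoke the Kronecker--Weyl equidistribution theorem (Lemma~\ref{subtorus}).

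First, I would substitute $u = \log x$ to rewrite
$$\frac{1}{\log X} \int_1^X f\bigl(E_\alpha^{(T)}(x)\bigr) \frac{dx}{x} = \frac{1}{\log X} \int_0^{\log X} f\bigl(E_\alpha^{(T)}(e^u)\bigr) \, du.$$
By \eqref{N(T)} only finitely many zeros $\rho$ of $\zeta(s)$ have $|\gamma| < T$; let $0 < \gamma_1 < \cdots < \gamma_N$ enumerate the positive ordinates of such zeros, noting that the hypothesis $J_{-1}(T) \ll T$ forces each $\rho_n = 1/2 + i\gamma_n$ to be simple, so that $\zeta'(\rho_n) \neq 0$. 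Under RH we have $x^{i\gamma} = e^{i\gamma u}$, and pairing each $\rho_n$ with its conjugate $\bar\rho_n$ (which also appears in the sum, with the corresponding summand being the complex conjugate) yields
$$E_\alpha^{(T)}(e^u) = \mu_\alpha + 2\Re \sum_{n=1}^N \frac{\zeta(2\rho_n)}{\zeta'(\rho_n)(\rho_n - \alpha)} e^{i\gamma_n u}, \qquad \mu_\alpha = \frac{1}{(1-2\alpha)\zeta(1/2)}.$$
Hence $E_\alpha^{(T)}(e^u) = g\bigl(e^{i\gamma_1 u}, \ldots, e^{i\gamma_N u}\bigr)$, where $g : \T^N \to \R$ is the continuous function
$$g(z_1, \ldots, z_N) = \mu_\alpha + 2\Re \sum_{n=1}^N \frac{\zeta(2\rho_n)}{\zeta'(\rho_n)(\rho_n - \alpha)} z_n.$$

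For any continuous $f : \R \to \C$, the composition $f \circ g$ is continuous and bounded on the compact torus $\T^N$. Applying Lemma~\ref{subtorus} with $t_n = \gamma_n/(2\pi)$ gives
$$\lim_{Y \to \infty} \frac{1}{Y} \int_0^Y (f \circ g)\bigl(e^{i\gamma_1 u}, \ldots, e^{i\gamma_N u}\bigr) \, du = \int_H (f \circ g)(z) \, d\mu_H(z),$$
where $H \subseteq \T^N$ is the closure of the one-parameter subgroup $\{(e^{i\gamma_1 u}, \ldots, e^{i\gamma_N u}) : u \in \R\}$ and $\mu_H$ is its normalised Haar measure. Taking $Y = \log X$ and combining with the substitution above, I would then define $\nu_{\alpha,T}$ to be the pushforward $g_* \mu_H$, which is a Borel probability measure on $\R$ since $g$ is continuous and $\mu_H$ is a probability measure. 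This gives the claimed identity, with
$$\int_\R f(x) \, d\nu_{\alpha,T}(x) = \int_H (f \circ g)(z) \, d\mu_H(z).$$

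There is no real obstacle here: the argument is essentially a mechanical application of Kronecker--Weyl once the trigonometric polynomial structure of $E_\alpha^{(T)}$ is identified. It is worth emphasising that LI is \emph{not} needed at this stage --- Lemma~\ref{subtorus} produces a genuine limit irrespective of arithmetic relations among the $\gamma_n$, with $H$ simply being whatever subtorus arises as the closure of the orbit. LI will enter only later, both to identify $H$ explicitly with $\T^N$ (so that $\nu_{\alpha,T}$ becomes the pushforward of the full Haar measure on $\T^N$) and, in combination with the error bound \eqref{error}, to pass to the limit $T \to \infty$ and construct the full limiting distribution $\nu_\alpha$ of Theorem~\ref{limitingdistribution}.
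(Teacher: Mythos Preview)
Your proof is correct and follows essentially the same route as the paper: rewrite $E_\alpha^{(T)}(e^u)$ as a continuous function $g$ on the $N$-torus evaluated along the orbit $(e^{i\gamma_1 u},\ldots,e^{i\gamma_N u})$, apply the Kronecker--Weyl lemma to $f\circ g$, and define $\nu_{\alpha,T}$ as the pushforward $g_*\mu_H$ (equivalently, the paper writes $\nu_{\alpha,T}(B)=\mu_H(g^{-1}(B))$). Your remark that LI is not needed here, and your explicit observation that $J_{-1}(T)\ll T$ guarantees the simplicity of the relevant zeros so that the coefficients $\zeta(2\rho_n)/(\zeta'(\rho_n)(\rho_n-\alpha))$ are well-defined, are both accurate and worth making.
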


The proof is essentially identical to the proof of \cite[Lemma 8]{Ng}; we include the details for further calculations.

\begin{proof}
We let $N = N(T)$ be the number of zeroes of $\zeta(s)$ with positive imaginary parts at most $T$; we denote these imaginary parts by $\gamma_1, \ldots, \gamma_N$. Then by Lemma \ref{subtorus} with $t_l = \gamma_l/2\pi$, there exists a subtorus $H \subset \T^N$ satisfying
	\[\lim_{Y \to \infty} \frac{1}{Y} \int^{Y}_{0}{g\left(e^{i \gamma_1 y}, \ldots, e^{i \gamma_N y}\right) \: dy} = \int_{H}{g(z) \: d\mu_H(z)}
\]
for every continuous function $g$ on $\T^N$, where $\mu_H$ is the normalised Haar measure on $H$. We now define the probability measure $\nu_{\alpha,T}$ on $\R$ by
	\[\nu_{\alpha,T}(B) = \mu_H(\widetilde{B})
\]
for each Borel set $B \subset \R$, where
	\[\widetilde{B} = \left\{(z_1,\ldots,z_N) \in H : \mu_{\alpha} + 2 \Re \left( \sum^{N}_{l = 1}{b_{\alpha,l} z_l}\right) \in B \right\},
\]
with
\begin{equation}{\label{b_0,b_l}}
\mu_{\alpha} = \frac{1}{(1 - 2\alpha) \zeta(1/2)}, \qquad b_{\alpha,l} = \frac{\zeta(1 + 2i\gamma_l)}{(1/2 - \alpha + i\gamma_l) \zeta'(1/2 + i\gamma_l)}.
\end{equation}
The function $\mu_{\alpha} + 2 \Re \left( \sum^{N}_{l = 1}{b_{\alpha,l} z_l}\right)$ is continuous on $H$, so  $\widetilde{B}$ is a Borel set in $H$, and $\nu_{\alpha,T}$ is a probability measure as $\mu_H$ is the normalised Haar measure on $H$. Now if $f$ is a bounded continuous function on $\R$, we define the function $g(z_1,\ldots,z_N)$ on the $N$-torus $\T^N$ by
	\[g(z_1,\ldots,z_N) = f\left(\mu_{\alpha} + 2 \Re \left( \sum^{N}_{l = 1}{b_{\alpha,l} z_l}\right)\right),
\]
so that $g$ is continuous on $\T^N$ with
	\[f\left(E_{\alpha}^{(T)}\left(e^y\right)\right) = g\left(e^{i \gamma_1 y}, \ldots, e^{i \gamma_N y} \right).
\]
Hence by Lemma \ref{subtorus},
\begin{equation}{\label{g,f}}
\begin{split}
\int_{\R}{f(x) \: d\nu_{\alpha,T}(x)} & = \int_{H}{g(z_1,\ldots,z_N) \: d\mu_H(z_1,\ldots,z_N)}	\\
& = \lim_{Y \to \infty} \frac{1}{Y} \int^{Y}_{0}{g\left(e^{i \gamma_1 y}, \ldots, e^{i \gamma_N y} \right) \: dy}	\\
& = \lim_{Y \to \infty} \frac{1}{Y} \int^{Y}_{0}{f\left(E_{\alpha}^{(T)}\left(e^y\right)\right) \: dy}	\\
& = \lim_{X \to \infty} \frac{1}{\log X} \int^{X}_{1}{f\left(E_{\alpha}^{(T)}(x)\right) \: \frac{dx}{x}},
\end{split}
\end{equation}
as required.
\end{proof}

\begin{theorem}[cf.{} {\cite[Theorem 2]{Ng}}, {\cite[Theorem 1.1]{Rubinstein}}]{\label{nuconstruction}}
Assume RH and $J_{-1}(T) \ll T$, and let $0 \leq \alpha < 1/2$. Then $L_{\alpha}(x)/x^{1/2 - \alpha}$ has a limiting logarithmic distribution $\nu_{\alpha}$ on $\R$. That is, there exists a probability measure $\nu_{\alpha}$ on $\R$ such that
	\[\lim_{X \to \infty} \frac{1}{\log X} \int^{X}_{1}{f \left(\frac{L_{\alpha}(x)}{x^{1/2 - \alpha}}\right) \: \frac{dx}{x}} = \int_{\R}{f(x) \: d\nu_{\alpha}(x)}
\]
for all bounded continuous functions $f$ on $\R$.
\end{theorem}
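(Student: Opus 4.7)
The plan is to extract the limiting distribution $\nu_\alpha$ by showing that, as $T\to\infty$, the probability measures $\nu_{\alpha,T}$ produced by the previous lemma converge weakly to a probability measure $\nu_\alpha$, while the contribution of the error term $\e_\alpha^{(T)}(x)$ to the logarithmic average of $f(L_\alpha(x)/x^{1/2-\alpha})$ becomes negligible uniformly in $X$ once $T$ is large. This is the strategy of Rubinstein--Sarnak and of Ng.

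The crucial analytic input (the content of the anticipated Lemma \ref{logmeansquare}) is a uniform logarithmic mean square bound of the form
\begin{equation*}
\limsup_{X\to\infty} \frac{1}{\log X}\int_1^X \left|\e_\alpha^{(T)}(x)\right|^2 \frac{dx}{x} \ll \sum_{|\gamma| \geq T} \frac{|\zeta(2\rho)|^2}{|\rho-\alpha|^2 |\zeta'(\rho)|^2},
\end{equation*}
whose right-hand side, by a dyadic decomposition together with the tail bound from Section \ref{sectionmoments} that uses $J_{-1}(T) \ll T$, tends to zero as $T \to \infty$. I would prove this bound by substituting the expression \eqref{error} for $\e_\alpha^{(T)}(x)$, expanding the square, and observing that the diagonal terms produce exactly the sum on the right, whereas each off-diagonal term is oscillatory of nonzero frequency $\gamma_j - \gamma_k$ and hence contributes only $O((\log X)^{-1})$ when integrated against $dx/x$ from $1$ to $X$. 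The remainder $R_\alpha(x,X)$ in \eqref{error} is absorbed by first fixing $T$ and then letting $X \to \infty$.

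From this $L^2$ estimate and the Cauchy--Schwarz inequality, I obtain for every bounded Lipschitz $f$ on $\R$ that
\begin{equation*}
\limsup_{X\to\infty} \left|\frac{1}{\log X}\int_1^X \left[f\left(\frac{L_\alpha(x)}{x^{1/2-\alpha}}\right) - f\left(E_\alpha^{(T)}(x)\right)\right] \frac{dx}{x}\right| \to 0
\end{equation*}
as $T \to \infty$. Combined with the previous lemma, this forces the numerical sequence $T \mapsto \int_\R f\, d\nu_{\alpha,T}$ to be Cauchy for every such $f$. The uniform second moment bound (again a consequence of $J_{-1}(T) \ll T$, since the second moment of $\nu_{\alpha,T}$ is bounded by $\mu_\alpha^2 + \sigma_\alpha^2$) yields tightness of the family $\{\nu_{\alpha,T}\}_{T \geq 1}$, so by Helly's selection theorem some subsequence converges weakly to a probability measure $\nu_\alpha$; the Cauchy property then upgrades this to weak convergence of the whole family. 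A standard three-$\e$ argument, splitting the difference between $\int_\R f\, d\nu_\alpha$ and the logarithmic average of $f(L_\alpha(x)/x^{1/2-\alpha})$ into the three pieces $(\nu_{\alpha,T} \to \nu_\alpha)$, the previous lemma applied at a fixed $T$, and the uniform error estimate above, extends the convergence from Lipschitz $f$ to all bounded continuous $f$.

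The main obstacle is the mean square estimate itself: one must justify that the off-diagonal cross terms $x^{i(\gamma_j - \gamma_k)}$ genuinely average away, uniformly in the truncation parameter $T$, and simultaneously control the diagonal tail through $J_{-1}(T) \ll T$ (this is precisely where that hypothesis enters). Beyond that, everything reduces to standard weak convergence machinery. Note that LI plays no role in this theorem; it will be needed only later, when identifying the median of $\nu_\alpha$ and exhibiting its smooth density.
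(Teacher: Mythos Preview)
Your overall architecture matches the paper's (and Ng's): decompose $L_\alpha(x)/x^{1/2-\alpha}=E_\alpha^{(T)}(x)+\e_\alpha^{(T)}(x)$, establish a logarithmic $L^2$ bound on $\e_\alpha^{(T)}$ that is uniform in $X$ and tends to $0$ as $T\to\infty$, combine this with the previous lemma to get weak convergence of $\nu_{\alpha,T}$ to some $\nu_\alpha$, and finish with a three-$\e$ argument. You are also right that LI plays no role here.

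The gap is in your sketch of the mean-square estimate. The sum inside $\e_\alpha^{(T)}(x)$ in \eqref{error} runs over $T\le|\gamma|<X$, so the number of off-diagonal pairs is of order $N(X)^2$, which grows much faster than $\log X$. Saying that ``each off-diagonal term contributes only $O((\log X)^{-1})$'' does not control the total: summing those bounds requires
\[
\sum_{\substack{T\le|\gamma_j|,|\gamma_k|<X\\ j\ne k}}\frac{|a_j||a_k|}{|\gamma_j-\gamma_k|}=o(\log X),
\]
and nothing in RH together with $J_{-1}(T)\ll T$ prevents small zero gaps $|\gamma_j-\gamma_k|$ from making this diverge. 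Your attempt to ``absorb $R_\alpha(x,X)$ by first fixing $T$ and then letting $X\to\infty$'' does not decouple the issue either, since the upper limit $X$ of the zero sum is the same as the upper limit of integration.

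The paper circumvents this by never integrating over $[1,X]$ directly. Instead (Lemma~\ref{keyboundlemma}, following \cite[Lemma~6]{Ng}) one proves a \emph{uniform short-interval} estimate
\[
\int_Z^{eZ}\Bigl|\sum_{T\le|\gamma|<X}\frac{\zeta(2\rho)}{\zeta'(\rho)}\frac{x^{i\gamma}}{\rho-\alpha}\Bigr|^2\,\frac{dx}{x}\ll \frac{\log T\,(\log\log T)^2}{T^{1/4}},
\]
valid for all $Z>0$ and all $X>T$; Lemma~\ref{logmeansquare} then follows by summing over $Z=e^k$, $0\le k\le\lfloor\log X\rfloor$. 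On an interval of length~$1$ in $\log x$ the off-diagonal can be handled by a dyadic decomposition of the $\gamma$-range combined with Cauchy--Schwarz and the bounds \eqref{J_{-1}(2T) - J_{-1}(T)} coming from $J_{-1}(T)\ll T$; this is precisely why the exponent $1/4$ appears rather than the $1$ your diagonal-only heuristic predicts. Once the short-interval lemma is in place, the remainder of your argument (tightness from the uniform second moment, Helly selection, upgrading from Lipschitz to bounded continuous test functions) is correct and is exactly what the omitted details of the paper's proof amount to.
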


We omit the details of the proof; it is essentially identical to the proof of \cite[Theorem 2]{Ng}. The major change from this proof is that we require the following estimate on $\e^{(T)}_{\alpha}(x)$.

\begin{lemma}[cf.{} {\cite[Lemma 10]{Ng}}]{\label{logmeansquare}}
Assume RH and $J_{-1}(T) \ll T$, and let $0 \leq \alpha < 1/2$. Then
	\[\lim_{X \to \infty} \frac{1}{\log X} \int^{X}_{1}{|\e_{\alpha}^{(T)}(x)|^2 \: \frac{dx}{x}} \ll \frac{\log T (\log \log T)^2}{T^{1/4}}.
\]
\end{lemma}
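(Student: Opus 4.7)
The plan is to split $\e_\alpha^{(T)}(x) = S(x) + R(x)$, where $S(x) = \sum_{T \leq |\gamma| < X} a_\gamma x^{i\gamma}$ with coefficients $a_\gamma = \zeta(2\rho)/((\rho-\alpha)\zeta'(\rho))$, and $R(x)$ is the error term in \eqref{error} satisfying $|R(x)| \ll 1/x^{1/2-\alpha} + \sqrt{x}\log x/X + \sqrt{x}/(X^{1-\e}\log x)$. Using $|\e_\alpha^{(T)}(x)|^2 \leq 2|S(x)|^2 + 2|R(x)|^2$, I would bound the logarithmic mean squares of $S$ and $R$ separately and add them.

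First I would dispatch the contribution of $R$. Squaring the pointwise bound and integrating against $dx/x$ from $1$ to $X$ gives $\int_1^X |R(x)|^2 \, dx/x \ll 1 + (\log X)^2/X + X^{-1+2\e}/(\log X)^2 = O(1)$ (using $\alpha < 1/2$ and $0 < \e < 1/2-\alpha$), so after division by $\log X$ and taking $X \to \infty$ this contribution vanishes entirely and plays no role in the final bound.

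The main task is to bound the logarithmic mean square of $S$. Changing variables $x = e^y$ with $Y = \log X$ and expanding the square splits $\frac{1}{Y}\int_0^Y |S(e^y)|^2 \, dy$ into a diagonal term $\sum_{T \leq |\gamma| < X} |a_\gamma|^2$ and an off-diagonal term
\begin{equation*}
\sum_{\substack{T \leq |\gamma|,|\gamma'| < X \\ \gamma \neq \gamma'}} a_\gamma \overline{a_{\gamma'}} \cdot \frac{e^{i(\gamma-\gamma')Y} - 1}{iY(\gamma-\gamma')}.
\end{equation*}
The diagonal is controlled using Littlewood's estimate \eqref{Littlewood} applied to $|\zeta(2\rho)| = |\zeta(1+2i\gamma)|$, together with $|\rho-\alpha|^2 \geq \gamma^2$ and partial summation against the hypothesis $J_{-1}(t) \ll t$, in the same manner as the derivation of \eqref{J_{-1}(T)decay}. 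This yields the tail bound $\sum_{|\gamma| \geq T} |a_\gamma|^2 \ll (\log\log T)^2/T$, which is well within the target.

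The principal obstacle is the off-diagonal sum. The natural approach is to combine the pointwise bound $|(e^{i(\gamma-\gamma')Y}-1)/(iY(\gamma-\gamma'))| \leq \min(1,\, 2/(Y|\gamma-\gamma'|))$ with a dyadic decomposition of the separation $|\gamma-\gamma'|$. For each dyadic scale, the number of $\gamma'$ close to a given $\gamma$ is controlled via the zero-density estimate \eqref{N(T+1)}, while the weights $|a_\gamma||a_{\gamma'}|$ are handled bilinearly by Cauchy--Schwarz together with $J_{-1}(T) \ll T$ (used in the form \eqref{J_{-1}(2T) - J_{-1}(T)} over dyadic intervals in $\gamma$) and Littlewood's bound on $|\zeta(1+2i\gamma)|$. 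Summing the dyadic contributions and optimizing the cutoff between the two pieces of the $\min$ produces the claimed $\log T\,(\log\log T)^2/T^{1/4}$; the loss from the diagonal rate $T^{-1}$ to $T^{-1/4}$ reflects the intrinsic cost of bilinear estimation over all pairs of zeros, given only coarse control on small zero gaps.
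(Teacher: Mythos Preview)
Your argument is correct and follows essentially the same route as the paper, which packages the core estimate as Lemma~\ref{keyboundlemma} (a direct adaptation of \cite[Lemma~6]{Ng}): a uniform bound on $\int_Z^{eZ}|S|^2\,dx/x$ over each interval $[Z,eZ]$, obtained by exactly the diagonal/off-diagonal analysis you describe, then summed over $O(\log X)$ such intervals and divided by $\log X$, with the $R_{\alpha}(x,X)$ contribution disposed of just as you do. The only caveat is that the optimisation giving $T^{-1/4}$ should be carried out at the $Y$-independent threshold $|\gamma-\gamma'|\asymp 1$ as in Ng, since $\min(1,2/(Y|\gamma-\gamma'|))\le\min(1,2/|\gamma-\gamma'|)$ for $Y\ge 1$; the extra factor of $1/Y$ in your kernel only helps and plays no role in the final exponent.
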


The proof of this estimate follows easily from the following bound.

\begin{lemma}[cf.{} {\cite[Lemma 6]{Ng}}]{\label{keyboundlemma}}
Assume RH and $J_{-1}(T) \ll T$, and let $0 \leq \alpha \leq 1/2$. Then for $Z > 0$ and $T < X$,
	\[\int^{eZ}_{Z}{\left|\sum_{T \leq |\gamma| < X}{\frac{\zeta(2\rho)}{\zeta'(\rho)} \frac{x^{i \gamma}}{\rho - \alpha}} \right|^2 \, \frac{dx}{x}} \ll \frac{\log T (\log \log T)^2}{T^{1/4}}.
\]
\end{lemma}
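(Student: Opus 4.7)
The plan is to reduce the integral to a mean square of an exponential sum over the zeros of $\zeta(s)$, in the spirit of \cite[Lemma 6]{Ng}, using the hypothesis $J_{-1}(T) \ll T$ together with Littlewood's bound \eqref{Littlewood}. First I would substitute $u = \log x$ to rewrite the left-hand side as
$$\int_{\log Z}^{\log Z + 1} \left| \sum_{T \leq |\gamma| < X} c_\gamma\, e^{i\gamma u} \right|^2 du, \qquad c_\gamma = \frac{\zeta(2\rho)}{\zeta'(\rho)(\rho - \alpha)}.$$
After a harmless shift $u \mapsto u - \log Z$, the value of the integral depends on $Z$ only through unimodular phases in the cross terms, so $Z$ plays no essential role. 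Expanding the square yields a diagonal term $\sum_{T \leq |\gamma| < X} |c_\gamma|^2$ and off-diagonal contributions $c_{\gamma_1}\overline{c_{\gamma_2}}\int_{\log Z}^{\log Z+1} e^{iu(\gamma_1 - \gamma_2)}\,du$, the inner integral having modulus at most $\min(1,\, 2/|\gamma_1 - \gamma_2|)$.

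For the diagonal part, Littlewood's estimate $|\zeta(2\rho)| = |\zeta(1 + 2i\gamma)| \ll \log\log|\gamma|$, together with dyadic partial summation against $J_{-1}(2U) - J_{-1}(U) \ll U$ (as in \eqref{J_{-1}(2T) - J_{-1}(T)}), yields
$$\sum_{T \leq |\gamma| < X} |c_\gamma|^2 \ll \frac{(\log\log T)^2}{T},$$
uniformly in $X$, which is already much stronger than the target. For the off-diagonal part I would apply the AM--GM inequality $|c_{\gamma_1} c_{\gamma_2}| \leq \tfrac{1}{2}(|c_{\gamma_1}|^2 + |c_{\gamma_2}|^2)$ and use symmetry to reduce the double sum to
$$\sum_{T \leq |\gamma_1| < X} |c_{\gamma_1}|^2 \sum_{\substack{\gamma_2 \neq \gamma_1 \\ T \leq |\gamma_2| < X}} \min\!\left(1,\, \frac{2}{|\gamma_1 - \gamma_2|}\right),$$
and then estimate the inner sum by grouping the zeros $\gamma_2$ into unit intervals around $\gamma_1$ and invoking the density bound \eqref{N(T+1)}. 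Combined with the diagonal estimate, careful accounting of the logarithmic factors should yield the claimed estimate $\ll \log T\,(\log\log T)^2\,T^{-1/4}$.

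The main obstacle is keeping the bound \emph{uniform in $X$}: a naive off-diagonal estimate picks up a factor of $\log X$ from the dyadic sum of $1/|\gamma_1 - \gamma_2|$ over zeros far from $\gamma_1$, which is unacceptable since Lemma \ref{logmeansquare} and its consequences require $X \to \infty$. Eliminating this $X$-dependence will require exploiting the oscillation in the integral $\int_{\log Z}^{\log Z+1} e^{iu(\gamma_1 - \gamma_2)}\,du$ more delicately than the crude pointwise bound allows, or else splitting the range of summation at some intermediate scale $T^{\theta}$ and balancing the resulting estimates. This tradeoff is presumably responsible for the weak power $T^{-1/4}$ appearing in the statement, rather than the much sharper $T^{-1}$ suggested by the diagonal alone.
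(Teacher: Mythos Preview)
Your reduction to a mean square of an exponential sum and the treatment of the diagonal are correct, and this is indeed the approach of \cite[Lemma~6]{Ng} to which the paper's proof simply defers (noting only that Littlewood's bound \eqref{Littlewood} on $|\zeta(2\rho)|$ contributes the extra factor $(\log\log T)^2$). You have also correctly isolated the genuine difficulty: the off-diagonal terms, bounded via $\min(1,2/|\gamma_1-\gamma_2|)$ and AM--GM, yield an inner sum over $\gamma_2$ whose size depends on the length of the range $[T,X)$ and hence on $X$.

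The gap is in the cure. A single split at a threshold $T^{\theta}$ does not help: the piece over $[T^{\theta},X)$ retains exactly the same $X$-dependence in its off-diagonal. The device that removes it is a \emph{dyadic} decomposition $[T,X)=\bigcup_{k\ge 0}I_k$, $I_k=[2^kT,2^{k+1}T)$, together with the weighted Cauchy--Schwarz inequality
\[
\Bigl|\sum_{k\ge 0}S_k(u)\Bigr|^2 \le \Bigl(\sum_{k\ge 0}w_k^{-1}\Bigr)\sum_{k\ge 0}w_k\,|S_k(u)|^2,
\qquad S_k(u)=\sum_{\gamma\in I_k}c_\gamma e^{i\gamma u},
\]
for geometrically growing weights $w_k$. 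The first factor converges independently of the number of blocks, hence of $X$. For each block your own off-diagonal argument now runs with $\gamma_2$ confined to an interval of length $2^kT$, so the inner sum is $\ll(\log 2^kT)^2$ by \eqref{N(T+1)}; combined with $\sum_{\gamma\in I_k}|c_\gamma|^2\ll(\log\log 2^kT)^2/(2^kT)$ from \eqref{J_{-1}(2T) - J_{-1}(T)}, this gives $\int_0^1|S_k|^2\,du\ll(\log 2^kT)^2(\log\log 2^kT)^2/(2^kT)$, and the weighted sum over $k$ converges uniformly in $X$. In fact this yields something rather stronger than the stated $\log T\,(\log\log T)^2/T^{1/4}$; the latter is simply what suffices for Lemma~\ref{logmeansquare}, so the weak exponent is an artifact of the presentation rather than of any genuine tradeoff.
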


\begin{proof}
The proof is essentially identical to the proof of \cite[Lemma 6]{Ng}, where the estimate
	\[\int^{eZ}_{Z}{\left|\sum_{T \leq |\gamma| < X}{\frac{1}{\zeta'(\rho)} \frac{x^{i \gamma}}{\rho}} \right|^2 \, \frac{dx}{x}} \ll \frac{\log T}{T^{1/4}}
\]
is proved. The replacement of $\rho$ by $\rho - \alpha$ in the denominator does not alter the proof at all, while we may use the estimate \eqref{Littlewood} to control the $\zeta(2\rho)$ term in the numerator, which upon mimicking Ng's proof leads to the additional $(\log \log T)^2$ term on the right-hand side.
\end{proof}

\section{An Explicit Formula for $\widehat{\nu_{\alpha}}$}
\label{sectionFormula}

In Theorem \ref{nuconstruction}, we may use Urysohn's lemma to show that the same result holds with $f$ the characteristic function of a Borel set $B \subset \R$ whose boundary has $\nu_{\alpha}$-measure zero. We would like to take $B = (-\infty,0]$ in order to prove the existence of the logarithmic density of the set $P_{\alpha} = \left\{x \in [1,\infty) : L_{\alpha}(x) \leq 0\right\}$. Unfortunately, we do not know that $\{0\}$ has $\nu_{\alpha}$-measure zero; indeed, we know very little about the properties of $\nu_{\alpha}$. If $\nu_{\alpha}$ is absolutely continuous with respect to the Lebesgue measure --- that is, if $d \nu_{\alpha}(x) = \psi_{\alpha}(x) \: dx$ for some nonnegative Lebesgue-integrable function $\psi_{\alpha}$ --- then $\nu_{\alpha}$ must be atomless, and the existence of this logarithmic density will follow. Without additional assumptions, however, it does not seem possible to prove that this is indeed the case.

Nevertheless, assuming certain additional conjectures allows us to derive an explicit formula for $\widehat{\nu_{\alpha}}$, the Fourier transform of $\nu_{\alpha}$, and with this we may show that $\nu_{\alpha}$ is absolutely continuous with respect to the Lebesgue measure. We assume the Linear Independence hypothesis, namely that the positive imaginary parts of the nontrivial zeroes of the Riemann zeta function are linearly independent over the rational numbers; this allows us to calculate $\widehat{\nu_{\alpha}}$ via the construction of $\nu_{\alpha}$ via Lemma \ref{subtorus} and subsequently show that this Fourier transform is reasonably well-behaved. The ensuing expression for $\widehat{\nu_{\alpha}}$ involves an infinite product of Bessel functions, and so we must first recall certain properties of such functions.

\begin{lemma}
Let
	\[\widetilde{J}_0(x) = \sum^{\infty}_{j = 0}{\frac{(-1)^j}{2^{2j} (j!)^2} x^{2j}}
\]
be the Bessel function of the first kind of order zero, and let
	\[\widetilde{I}_0(x) = \widetilde{J}_0(ix) = \sum^{\infty}_{j = 0}{\frac{x^{2j}}{2^{2j} (j!)^2}}
\]
be the modified Bessel function of the first kind of order zero. Then we have the identities
\begin{align}
\widetilde{J}_0(x) = \int^{1}_{0}{e^{i x \cos (2\pi \theta)} \: d\theta},	\label{Besselcos1}\\
\widetilde{I}_0(x) = \int^{1}_{0}{e^{x \cos (2\pi \theta)} \: d\theta},	\label{Besselcos2}
\end{align}
and the bounds
\begin{align}
|\widetilde{J}_0(x)| & \leq \min\left\{1, \sqrt{\frac{3}{4|x|}}\right\},	\label{Besseldecay1}\\
|\widetilde{I}_0(x)| & \leq e^{x^2/4}	\label{Besseldecay2}
\end{align}
for all $x \in \R$.
\end{lemma}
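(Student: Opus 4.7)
The plan is to tackle the four assertions in order, starting with the integral representations, which drive the rest.

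For the two integral formulas, I would Taylor-expand the exponentials and integrate termwise. The key ingredient is the moment computation
$$\int_0^1 \cos^n(2\pi\theta)\,d\theta = \begin{cases} 0 & \text{if $n$ is odd,} \\ \binom{2k}{k}/2^{2k} & \text{if $n = 2k$,} \end{cases}$$
which follows by writing $\cos(2\pi\theta) = (e^{2\pi i\theta} + e^{-2\pi i\theta})/2$, applying the binomial theorem, and using the orthogonality relation $\int_0^1 e^{2\pi i j\theta}\,d\theta = \delta_{j,0}$. Substituting this into the termwise expansion of $e^{ix\cos(2\pi\theta)} = \sum_n (ix)^n \cos^n(2\pi\theta)/n!$ and collapsing $\binom{2k}{k}/(2k)! = 1/(k!)^2$ reproduces exactly the power series defining $\widetilde{J}_0(x)$; the same calculation applied to $e^{x\cos(2\pi\theta)}$ gives the formula for $\widetilde{I}_0(x)$. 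Term-by-term integration is justified by the absolute convergence of the resulting double series.

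The bound $|\widetilde{J}_0(x)| \leq 1$ is then immediate, as the integrand in the first formula has modulus one. For the decay $|\widetilde{J}_0(x)| \leq \sqrt{3/(4|x|)}$, I would first pass to the symmetric form $\widetilde{J}_0(x) = \frac{1}{\pi}\int_0^\pi \cos(x \sin u)\,du$ (obtained by substitution and the evenness of $\cos(x\cos(2\pi\theta))$ around $\theta = 1/2$), and then apply van der Corput's second-derivative test together with a stationary-phase analysis near the unique interior stationary point $u = \pi/2$ of the phase $\sin u$. Splitting $[0,\pi]$ into a small neighbourhood of $\pi/2$, where a Fresnel-type substitution produces an $O(|x|^{-1/2})$ main term, and the complementary region, where van der Corput's estimate yields matching decay, produces a bound of the shape $C|x|^{-1/2}$ asymptotic to the classical $\sqrt{2/(\pi|x|)}$. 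The real technical point is pinning down the explicit uniform constant $\sqrt{3}/2$ that is valid for all $x \in \R$, not merely asymptotically; this requires careful bookkeeping of the remainder in the stationary-phase expansion (or, alternatively, a direct argument via the identity $\widetilde{J}_0'(x) = -\widetilde{J}_1(x)$ combined with interpolation between the trivial bound and the sharper asymptotic), and is the principal obstacle in the proof.

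Finally, the Gaussian bound $|\widetilde{I}_0(x)| \leq e^{x^2/4}$ falls out by termwise comparison of power series: since $(j!)^2 \geq j!$ for every $j \geq 0$, and every term on both sides is nonnegative for real $x$,
$$\widetilde{I}_0(x) = \sum_{j=0}^\infty \frac{x^{2j}}{2^{2j}(j!)^2} \leq \sum_{j=0}^\infty \frac{x^{2j}}{4^j\,j!} = e^{x^2/4}.$$
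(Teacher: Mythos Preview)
Your arguments for the two integral identities and for the bound $|\widetilde I_0(x)|\le e^{x^2/4}$ are correct and in fact more self-contained than the paper, which simply cites Watson for \eqref{Besselcos1} and Montgomery for \eqref{Besseldecay2}; your termwise comparison $(j!)^2\ge j!$ is exactly the content of the cited inequality.

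The real divergence is in the decay bound \eqref{Besseldecay1}. The paper does not go through van der Corput or a raw stationary-phase argument at all. Instead it invokes Hankel's asymptotic expansion in the explicit form recorded in Watson, namely
\[
\widetilde J_0(x)=\sqrt{\frac{2}{\pi x}}\bigl(P(x)\cos(x-\tfrac{\pi}{4})+Q(x)\sin(x-\tfrac{\pi}{4})\bigr),\qquad 0<P(x)<1,\ 0<Q(x)<\frac{1}{8x},
\]
valid for $x>0$. From this the explicit constant drops out in two lines: for $x\ge 3/4$ one bounds $|P(x)\cos(\cdot)|\le 1$ and $|Q(x)\sin(\cdot)|\le 1/16$, giving $|\widetilde J_0(x)|\le (17/16)\sqrt{2/(\pi x)}<\sqrt{3/(4x)}$, and for $|x|<3/4$ the trivial bound $|\widetilde J_0(x)|\le 1$ suffices. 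Your plan via van der Corput plus a Fresnel substitution near the stationary point would certainly give a bound $C|x|^{-1/2}$, but you yourself flag that nailing the constant $C=\sqrt{3}/2$ this way is the ``principal obstacle'' and leave it undone. That obstacle is real: stationary-phase remainders with sharp uniform constants are notoriously fiddly, and neither the alternative you mention (using $\widetilde J_0'=-\widetilde J_1$ with interpolation) nor standard van der Corput bookkeeping will land cleanly on $\sqrt{3}/2$. The paper's route sidesteps the issue entirely by importing the explicit $P,Q$ bounds from Watson, and that is what you should do here.
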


We note that one would usually write $J_0(x)$ to denote the Bessel function of the first kind of order zero; we instead use the notation $\widetilde{J}_0(x)$ so as to avoid confusion with the discrete moments $J_k(T) = \sum_{0 < \gamma < T}{|\zeta'(\rho)|^{2k}}$ of the Riemann zeta function. Similarly, it is more customary to write $I_0(x)$ to denote the modified Bessel function of order zero, but this coincides with our notation for the integral \eqref{I(x)}.

\begin{proof}
The identity \eqref{Besselcos1} is, after an appropriate change of variables, \cite[\S 3.3 Equation (6)]{Watson}, and then replacing $ix$ by $x$ and using the periodicity of the integrand yields the identity \eqref{Besselcos2}. The triangle inequality thereby implies the bound $|\widetilde{J}_0(x)| \leq 1$. We therefore need only prove the second bound on $|\widetilde{J}_0(x)|$ for $|x| \geq 3/4$. By \cite[\S 7.3 Equation (1), \S 7.31 Equations (1)--(2)]{Watson}, we have that for $x > 0$,
	\[\widetilde{J}_0(x) = \sqrt{\frac{2}{\pi x}} \left(P(x) \cos \left(x - \frac{\pi}{4}\right) + Q(x) \sin \left(x - \frac{\pi}{4}\right)\right)
\]
with
	\[0 < P(x) < 1, \qquad 0 < Q(x) < \frac{1}{8x}.
\]
Noting that $|\cos (x - \pi/4)| \leq 1$ and $|\sin (x - \pi/4)| \leq x/2$ for $x \geq 3/4$, we obtain
	\[|\widetilde{J}_0(x)| \leq \sqrt{\frac{2}{\pi x}} \left(1 + \frac{1}{16}\right) < \sqrt{\frac{3}{4 x}}
\]
for $x \geq 3/4$, and the bound \eqref{Besseldecay1} follows by noting that $\widetilde{J}_0(x)$ is an even function. Finally, the bound \eqref{Besseldecay2} is \cite[\S 3 Equation (2)]{Montgomery2}.
\end{proof}

The following result is akin to the results in \cite[\S 3.1]{Rubinstein} and \cite[Corollary 1]{Ng}.

\begin{theorem}{\label{Besselproof}}
Assume RH, LI, and $J_{-1}(T) \ll T$, and let $0 \leq \alpha < 1/2$. Then the Fourier transform $\widehat{\nu_{\alpha}}(\xi) = \int_{\R}{e^{-i\xi x} \: d\nu_{\alpha}(x)}$ of $\nu_{\alpha}$ is given by
\begin{equation}{\label{nutransform}}
\widehat{\nu_{\alpha}}(\xi) = \exp\left(- \frac{i \xi}{(1 - 2\alpha) \zeta(1/2)}\right) \prod_{\gamma > 0}{\widetilde{J}_0\left( \frac{2|\zeta(2\rho)|\xi}{|\rho - \alpha| |\zeta'(\rho)|} \right)},
\end{equation}
where $\widetilde{J}_0(x)$ is the Bessel function of the first kind of order zero.
\end{theorem}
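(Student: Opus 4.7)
The plan is to first compute the Fourier transform of the truncated measure $\nu_{\alpha,T}$ (the distribution associated to $E_\alpha^{(T)}(x)$) using LI and the Kronecker--Weyl construction, and then take $T \to \infty$. Under LI, the positive imaginary parts $\gamma_1,\ldots,\gamma_N$ of zeros with $0 < \gamma_l < T$ are linearly independent over $\Q$, so Lemma \ref{subtorus} says that the topological closure $H$ of $\widetilde{H}$ in $\T^N$ is all of $\T^N$, and the normalised Haar measure $d\mu_H$ is simply the product $d\theta_1 \cdots d\theta_N$ under the parametrisation $z_l = e^{2\pi i \theta_l}$. Applying the identity \eqref{g,f} to the bounded continuous function $f(x) = e^{-i\xi x}$ yields
\[\widehat{\nu_{\alpha,T}}(\xi) = e^{-i\xi \mu_\alpha}\int_{\T^N} \exp\Bigl(-2i\xi \Re\sum_{l=1}^N b_{\alpha,l} z_l\Bigr) \, d\mu_H(z_1,\ldots,z_N),\]
with $\mu_\alpha$ and $b_{\alpha,l}$ as in \eqref{b_0,b_l}.

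Writing $b_{\alpha,l} = |b_{\alpha,l}| e^{i\phi_l}$ and exploiting the periodicity of $\theta_l \mapsto \cos(2\pi \theta_l + \phi_l)$ to absorb the phases, the integral over $\T^N$ factors as a product of one-dimensional integrals, each of which equals $\widetilde{J}_0(2\xi |b_{\alpha,l}|)$ by the integral representation \eqref{Besselcos1} (using that $\widetilde{J}_0$ is even). Inserting $|b_{\alpha,l}| = |\zeta(2\rho_l)|/(|\rho_l - \alpha| |\zeta'(\rho_l)|)$ gives
\[\widehat{\nu_{\alpha,T}}(\xi) = \exp\!\left(-\frac{i\xi}{(1-2\alpha)\zeta(1/2)}\right) \prod_{0 < \gamma < T} \widetilde{J}_0\!\left(\frac{2|\zeta(2\rho)|\xi}{|\rho-\alpha||\zeta'(\rho)|}\right),\]
which is exactly the finite analogue of \eqref{nutransform}.

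It then remains to pass to the limit $T \to \infty$ on both sides. On the product side, the convergence of the infinite product follows from the Taylor expansion $\widetilde{J}_0(x) = 1 - x^2/4 + O(x^4)$ near $0$ together with the estimate \eqref{J_{-1}(T)decay} of the previous section, which (summed over all $\gamma > 0$) guarantees $\sum_{\gamma > 0}|\zeta(2\rho)|^2/(|\rho-\alpha||\zeta'(\rho)|)^2 < \infty$; the decay bound \eqref{Besseldecay1} controls the contribution of large $\gamma$ (where the Bessel factor could, in principle, be close to $-1$) and ensures absolute convergence of the logarithm of the tail. On the measure side, one needs $\widehat{\nu_{\alpha,T}}(\xi) \to \widehat{\nu_{\alpha}}(\xi)$ for each fixed $\xi$; this is obtained from Lemma \ref{logmeansquare} exactly as in the proof of Theorem \ref{nuconstruction}: the decomposition $L_\alpha(x)/x^{1/2-\alpha} = E_\alpha^{(T)}(x) + \e_\alpha^{(T)}(x)$ combined with the logarithmic mean-square bound shows $\nu_{\alpha,T} \to \nu_\alpha$ weakly as $T \to \infty$, so in particular their Fourier transforms converge pointwise. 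Equating the two limits gives \eqref{nutransform}.

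The main obstacle I expect is the justification of the convergence of the infinite product, uniform enough in $\xi$ on compact sets to pass to the limit. The subtlety is that for a given $\xi$, infinitely many arguments $2|\zeta(2\rho)|\xi/(|\rho-\alpha||\zeta'(\rho)|)$ could potentially be large if the denominators are very small; this is precisely why the hypothesis $J_{-1}(T) \ll T$ is invoked, since it controls how often $|\zeta'(\rho)|^{-1}$ can be large. Once this convergence is secured via \eqref{Besseldecay1} and the tail estimate \eqref{J_{-1}(T)decay} (or more precisely the convergence of $\sum_\gamma |b_{\alpha,\gamma}|^2$ established in the same lemma), the identification of the limits on the two sides is routine.
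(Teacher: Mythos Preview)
Your proposal is essentially the same as the paper's proof: use LI to identify $H=\T^N$, factor the integral for $\widehat{\nu_{\alpha,T}}(\xi)$ into one-dimensional integrals that yield $\widetilde{J}_0$, and then pass to the limit $T\to\infty$ via the weak convergence $\nu_{\alpha,T}\to\nu_\alpha$ already established in Theorem~\ref{nuconstruction}. One small slip: the estimate you cite for the convergence of $\sum_{\gamma>0}|b_{\alpha,\gamma}|^2$, namely \eqref{J_{-1}(T)decay}, is a \emph{lower} bound; the relevant upper bound is \eqref{J_{-1}(2T) - J_{-1}(T)} (or simply the hypothesis $J_{-1}(T)\ll T$ directly), though in fact the paper does not even argue this separately, since weak convergence of measures already forces pointwise convergence of the characteristic functions and hence of the partial products.
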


\begin{proof}
If the positive imaginary parts of the Riemann zeta function are linearly independent over the rational numbers, then by Lemma \ref{subtorus}, for any $N = N(T)$ the topological closure $H$ of the set $\left\{\left(e^{ i \gamma_1 y}, \ldots, e^{i \gamma_N y}\right) \in \T^N : y \in \R\right\}$ is all of $\T^N$. Thus the normalised Haar measure $\mu_H$ on $H$ is the normalised Lebesgue measure $dz_1 \cdots dz_N$, and so by \eqref{g,f} with $\mu_{\alpha},b_{\alpha,l}$, $1 \leq l \leq N$ as in \eqref{b_0,b_l},
	\[\int_{\R}{f(x) \: d\nu_{\alpha,T}(x)} = \int_{\T^N}{f\left(\mu_{\alpha} + 2 \Re \left( \sum^{N}_{l = 1}{b_{\alpha,l} z_l}\right)\right) \: dz_1 \cdots dz_N}
\]
for all bounded continuous functions $f : \R \to \R$. Taking $f(x) = e^{-i\xi x}$, we see that
\begin{align*}
\widehat{\nu_{\alpha,T}}(\xi) & = \int_{\T^N}{\exp\left(-i \mu_{\alpha} \xi - 2 i\xi \Re \left( \sum^{N}_{l = 1}{b_{\alpha,l} z_l}\right) \right) \: dz_1 \cdots dz_N}	\\
& = e^{-i \mu_{\alpha} \xi} \prod^{N}_{l = 1}{\int^{1}_{0}{\exp\left( - 2 i\xi \Re \left(b_{\alpha,l} e^{2\pi i \theta_l}\right) \right) \: d\theta_l}}	\\
& = e^{-i \mu_{\alpha} \xi} \prod^{N}_{l = 1}{\int^{1}_{0}{\exp\left( - 2|b_{\alpha,l}| i\xi \cos(2\pi (\theta_l + \beta_{\alpha,l}))\right) \: d\theta_l}}	\\
& = e^{-i \mu_{\alpha} \xi} \prod^{N}_{l = 1}{\int^{1}_{0}{\exp\left(2|b_{\alpha,l}| i \xi \cos (2\pi \theta_l)\right) \: d\theta_l}}	\\
& = e^{-i \mu_{\alpha} \xi} \prod^{N}_{l = 1}{\widetilde{J}_0(2|b_{\alpha,l}|\xi)}.
\end{align*}
Here we have set $\beta_{\alpha,l} = \arg(b_{\alpha,l})/2\pi$ and used the periodicity of the integrand and the identity \eqref{Besselcos1}. It then follows by the weak convergence of $\nu_{\alpha,T}$ to $\nu_{\alpha}$ that for each $\xi \in \R$,
\begin{align*}
\widehat{\nu_{\alpha}}(\xi) & = \lim_{T \to \infty} \widehat{\nu_{\alpha,T}}(\xi)	\\
& = \lim_{T \to \infty} \exp\left(- \frac{i \xi}{(1 - 2\alpha) \zeta(1/2)}\right) \prod^{N(T)}_{l = 1}{\widetilde{J}_0\left( \frac{2|\zeta(1 + 2i\gamma_l)|\xi}{|1/2 - \alpha + i\gamma_l| |\zeta'(1/2 + i\gamma_l)|} \right)}	\\
& = \exp\left(- \frac{i \xi}{(1 - 2\alpha) \zeta(1/2)}\right) \prod_{\gamma > 0}{\widetilde{J}_0\left( \frac{2|\zeta(2\rho)|\xi}{|\rho - \alpha| |\zeta'(\rho)|} \right)}.
\qedhere\end{align*}
\end{proof}

\begin{corollary}
Assume RH, LI, and $J_{-1}(T) \ll T$, and let $0 \leq \alpha < 1/2$. The mean $\mu_{\alpha}$ and variance $\sigma_{\alpha}^2$ of $\nu_{\alpha}$ are given by
\begin{align*}
\mu_{\alpha} & = \frac{1}{(1 - 2\alpha) \zeta(1/2)},	\\
\sigma_{\alpha}^2 & = 2 \sum_{\gamma > 0}{\frac{|\zeta(2\rho)|^2}{(|\rho - \alpha| |\zeta'(\rho)|)^2}}.
\end{align*}
\end{corollary}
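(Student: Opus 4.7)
The plan is to extract the mean and variance of $\nu_{\alpha}$ from the Taylor expansion of $\widehat{\nu_{\alpha}}(\xi)$ at $\xi = 0$, using the standard fact that whenever a probability measure's characteristic function $\phi$ is twice differentiable at the origin, the measure has finite second moment with $\phi'(0) = -i\mu$ and $\phi''(0) = -(\mu^2+\sigma^2)$. The entire task therefore reduces to expanding the right-hand side of \eqref{nutransform} to order $O(\xi^3)$ and matching coefficients.

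For brevity write $a_{\gamma} = 2|\zeta(2\rho)|/(|\rho-\alpha||\zeta'(\rho)|)$, so that \eqref{nutransform} becomes $\widehat{\nu_{\alpha}}(\xi) = e^{-i\mu_{\alpha} \xi} \prod_{\gamma>0} \widetilde{J}_0(a_{\gamma} \xi)$. The first step is to show that $\sum_{\gamma>0} a_{\gamma}^2 < \infty$: a dyadic splitting over $\gamma \in [2^k, 2^{k+1}]$ combined with \eqref{J_{-1}(2T) - J_{-1}(T)} gives a contribution of $\ll (\log \log 2^k)^2 / 2^k$ from each block, which is summable in $k$. In particular $a_{\gamma} \to 0$, so $M := \sup_{\gamma} a_{\gamma}$ is finite, and $\sum_{\gamma} a_{\gamma}^{2k}$ converges for every $k \geq 1$.

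Next, on the disc $|\xi| \leq 1/M$ one has $|a_{\gamma} \xi| \leq 1$ uniformly in $\gamma$, so the power series $\widetilde{J}_0(y) = 1 - y^2/4 + O(y^4)$ yields
\[
\widetilde{J}_0(a_{\gamma} \xi) = 1 - \frac{a_{\gamma}^2 \xi^2}{4} + R_{\gamma}(\xi), \qquad |R_{\gamma}(\xi)| \ll a_{\gamma}^4 \xi^4,
\]
and every factor is bounded below by $\widetilde{J}_0(1) > 0$. This positivity lets me take logarithms, and $\sum_{\gamma} \log \widetilde{J}_0(a_{\gamma} \xi)$ converges absolutely thanks to the $\ell^2$-summability just established. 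Expanding $\log(1+t) = t - t^2/2 + O(t^3)$ term by term and noting that $\sum_{\gamma} a_{\gamma}^2/4 = \sigma_{\alpha}^2/2$ is immediate from the definitions, I obtain
\[
\prod_{\gamma > 0} \widetilde{J}_0(a_{\gamma} \xi) = 1 - \frac{\sigma_{\alpha}^2}{2} \xi^2 + O(\xi^4).
\]
Multiplication by $e^{-i\mu_{\alpha} \xi} = 1 - i\mu_{\alpha}\xi - \mu_{\alpha}^2 \xi^2/2 + O(\xi^3)$ then gives
\[
\widehat{\nu_{\alpha}}(\xi) = 1 - i\mu_{\alpha}\xi - \frac{\mu_{\alpha}^2 + \sigma_{\alpha}^2}{2}\xi^2 + O(\xi^3),
\]
from which $\int x\, d\nu_{\alpha} = \mu_{\alpha}$ and $\int x^2\, d\nu_{\alpha} = \mu_{\alpha}^2 + \sigma_{\alpha}^2$ can be read off directly, so $\mathrm{Var}(\nu_{\alpha}) = \sigma_{\alpha}^2$.

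The one delicate point is justifying the absolute convergence of the log-expansion and that the remainders $R_{\gamma}(\xi)$ accumulate to $O(\xi^4)$ rather than something larger; both hinge on the summability $\sum a_{\gamma}^2 < \infty$ (used once for the main term and once more, together with the uniform bound $a_{\gamma} \leq M$, for the error term). Since that estimate is exactly what \eqref{J_{-1}(2T) - J_{-1}(T)} supplies, no hypotheses beyond those of Theorem \ref{Besselproof} are needed.
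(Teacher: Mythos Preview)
Your proof is correct and follows essentially the same approach as the paper: both expand $\log \widetilde{J}_0(a_{\gamma}\xi) = -\tfrac{1}{4}a_{\gamma}^2\xi^2 + O(a_{\gamma}^2\xi^4)$, sum over $\gamma$ using $\sum a_{\gamma}^2 < \infty$ (guaranteed by $J_{-1}(T)\ll T$), and read off the first two coefficients. The only cosmetic difference is that the paper identifies $\kappa_1,\kappa_2$ directly from the cumulant-generating function $\log\widehat{\nu_{\alpha}}$, whereas you exponentiate back and match coefficients of the characteristic function itself; you are also more explicit about why the logarithm is legitimate and why the error terms sum.
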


\begin{proof}
As $\widetilde{J}_0(0) = 1$ and $\lim_{l \to \infty} b_{\alpha,l} = 0$, we have that
	\[\log \widetilde{J}_0(2 |b_{\alpha, l}| \xi) = - |b_{\alpha,l}|^2 \xi^2 + O\left(|b_{\alpha,l}|^2 \xi^4\right)
\]
for all sufficiently small $\xi$, uniformly in $l$. Here we have used the Taylor series for $\widetilde{J}_0(x)$ and the fact that $|b_{\alpha,l}|^{2j} \ll |b_{\alpha,l}|^2$ for any $j \geq 1$. Consequently,
	\[\log \widehat{\nu_{\alpha}}(\xi) = i \mu_{\alpha} \xi - \sum^{\infty}_{l = 1}{|b_{\alpha,l}|^2} \xi^2 + O(\xi^4)
\]
in a sufficiently small neighbourhood of the origin; we note that $J_{-1}(T) \ll T$ ensures that the coefficient of $\xi^2$ is finite. We recall that $\log \widehat{\nu_{\alpha}}(\xi)$ is the cumulant-generating function of $\nu_{\alpha}$ with cumulants $\kappa_j$ given by
	\[\log \widehat{\nu_{\alpha}}(\xi) = \sum^{\infty}_{j = 1}{\frac{\kappa_j}{j!} (i\xi)^j}.
\]
As $\kappa_1$ is the mean of $\nu_{\alpha}$ and $\kappa_2$ is the variance, we obtain the result.
\end{proof}

The bound \eqref{Besseldecay1} on $\widetilde{J}_0(x)$ allows us to prove that $\widehat{\nu_{\alpha}}(\xi)$ decays rapidly.

\begin{lemma}[cf.{} {\cite[Lemma 2.1]{Feuerverger}}]{\label{nudecay}}
For all $\e > 0$, there exist positive constants $\beta_1, \beta_2$ such that for all $\xi \in \R$,
\begin{equation}{\label{nuhatdecay}}
|\widehat{\nu_{\alpha}}(\xi)| \leq \beta_1 e^{-\beta_2 |\xi|^{\frac{1}{1+\e}}}.
\end{equation}
\end{lemma}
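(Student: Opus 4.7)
The plan is to exploit the product formula for $\widehat{\nu_{\alpha}}(\xi)$ established in Theorem \ref{Besselproof}. Writing $b_{\alpha,\gamma} = \zeta(2\rho)/((\rho-\alpha)\zeta'(\rho))$ and noting that the exponential prefactor has modulus one,
\[
|\widehat{\nu_{\alpha}}(\xi)| = \prod_{\gamma > 0}\bigl|\widetilde{J}_0(2|b_{\alpha,\gamma}|\xi)\bigr|.
\]
For bounded $\xi$ the product is at most one, so I may take $\xi > 0$ large; the constant $\beta_1$ will then absorb the bounded regime.

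First, combining the Taylor expansion $\widetilde{J}_0(x) = 1 - x^2/4 + O(x^4)$ at the origin with the decay bound \eqref{Besseldecay1}, I would establish a pointwise estimate of the form $-\log|\widetilde{J}_0(x)| \geq c\min(x^2, 1)$ for some absolute $c > 0$ and all $x \geq 0$, so that
\[
-\log|\widehat{\nu_{\alpha}}(\xi)| \;\geq\; c \sum_{\gamma > 0} \min\bigl((2|b_{\alpha,\gamma}|\xi)^2,\, 1\bigr).
\]

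Next, the moment estimates of Section \ref{sectionmoments} will provide both sides of the tail $\ell^2$ mass. Summing \eqref{J_{-1}(2T) - J_{-1}(T)} over dyadic ranges gives $\sum_{\gamma > T}|b_{\alpha,\gamma}|^2 \ll (\log\log T)^2/T$, while \eqref{J_{-1}(T)decay} gives $\sum_{\gamma > T}|b_{\alpha,\gamma}|^2 \gg 1/(T(\log\log T)^2)$. As a consequence of the upper tail, any individual coefficient satisfies the uniform pointwise bound $|b_{\alpha,\gamma}|^2 \ll (\log\log\gamma)^2/\gamma$.

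I would then follow the template of Feuerverger's proof in \cite[Lemma 2.1]{Feuerverger}, introducing a cutoff $T = T(\xi)$ and partitioning the zeros $\gamma > T$ into a "good" set $G = \{\gamma > T : 2|b_{\alpha,\gamma}|\xi \leq 1\}$, contributing $4\xi^2|b_{\alpha,\gamma}|^2$ each to the sum, and a "bad" set $B = \{\gamma > T : 2|b_{\alpha,\gamma}|\xi > 1\}$, contributing at least one each. Combining Chebyshev's inequality $|B| \ll \xi^2 \sum_{\gamma > T}|b_{\alpha,\gamma}|^2$ with the two tail estimates and the uniform bound above, the aim is to choose $T$ so that the total contribution dominates $\beta_2 |\xi|^{1/(1+\e)}$. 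A choice of the form $T \sim \xi^{1+\e/2}$ (up to logarithmic corrections) balances the two contributions; substituting, the resulting lower bound exceeds $\beta_2 |\xi|^{1/(1+\e)}$ since $1 - \e/2 > 1/(1+\e)$ for $\e > 0$ small, yielding the claimed sub-exponential decay.

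The main technical obstacle is the quantitative partitioning in the last step: the $\ell^2$ tail bounds do not directly produce a clean lower bound on the "good" mass $\sum_{G}|b_{\alpha,\gamma}|^2$, since the individual $|b_{\alpha,\gamma}|$ can behave erratically when $|\zeta'(\rho)|$ is atypically small. Transferring the $\ell^2$ information into an effective pointwise partition, via Chebyshev (with its quadratic loss) coupled with the crude uniform bound $|b_{\alpha,\gamma}|^2 \ll (\log\log\gamma)^2/\gamma$, is the delicate core of the argument and is precisely what forces the exponent to be $1/(1+\e)$ rather than the $1$ that sub-Gaussian concentration would yield in a fully independent setting; a sharper exponent would require hypotheses on higher negative discrete moments $J_k(T)$ for $k < -1$, which are not available under our assumptions.
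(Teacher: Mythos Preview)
Your partition argument has a genuine gap that no choice of $T$ repairs. Write the tail contribution as $4\xi^2\sum_G |b_{\alpha,\gamma}|^2 + |B|$, and use your pointwise bound $|b_{\alpha,\gamma}|^2 \le C(\log\log T)^2/T$ for $\gamma > T$ to get
\[
4\xi^2\sum_G |b_{\alpha,\gamma}|^2 + |B|
\;\ge\; \frac{4c\,\xi^2}{T(\log\log T)^2} \;+\; |B|\Bigl(1 - \frac{4C\,\xi^2(\log\log T)^2}{T}\Bigr).
\]
If you want the bracket nonnegative you are forced to take $T \gg \xi^2(\log\log\xi)^2$, and then the first term collapses to $O((\log\log\xi)^{-4})$, far short of $|\xi|^{1/(1+\e)}$. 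If instead you take $T = \xi^{1+\e/2}$ the bracket is large and negative, and inserting the Chebyshev bound $|B| \ll \xi^2(\log\log T)^2/T$ makes the whole expression negative. The underlying obstruction is that your upper and lower $\ell^2$ tail bounds differ by a factor $(\log\log T)^4$, so a handful of ``bad'' zeros can legitimately absorb all of the mass guaranteed by \eqref{J_{-1}(T)decay}, leaving $\sum_G$ undetectably small; the Feuerverger--Martin template works in \cite{Feuerverger} only because their coefficients $1/|\rho|$ have a deterministic pointwise lower bound, which your $|b_{\alpha,\gamma}|$ do not inherit from $\ell^2$ information alone.

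The paper's argument supplies exactly the missing pointwise \emph{lower} bound on $|b_{\alpha,\gamma}|$, using Littlewood's conditional estimate \eqref{Littlewoodderivative} on $\zeta'(\rho)$ together with \eqref{Littlewoodlower}: one gets $|b_{\alpha,\gamma}|^{-1} \ll \gamma\exp(\widetilde{c}\,\log\gamma/\log\log\gamma)$ for \emph{every} zero. Keeping only the $N(T)$ factors with $\gamma < T$ and applying the second branch of \eqref{Besseldecay1} to each gives
\[
|\widehat{\nu_{\alpha}}(\xi)| \;\ll\; \exp\!\left(\tfrac{1}{2}N(T)\log\Bigl(\tfrac{3}{8|\xi|}\,T\,e^{\widetilde{c}\log T/\log\log T}\Bigr)\right),
\]
and the choice $T^{1+\e} = |\xi|$ with $N(T) \ll T\log T$ finishes it. Note that this step uses only RH (for \eqref{Littlewoodlower} and \eqref{Littlewoodderivative}) and the zero-counting asymptotic; the hypothesis $J_{-1}(T) \ll T$ plays no role in the decay bound itself.
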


\begin{proof}
By \eqref{nutransform} and \eqref{Besseldecay1}, we have that
	\[|\widehat{\nu_{\alpha}}(\xi)| \leq \prod_{\gamma > 0}{\min\left\{1, \sqrt{\frac{3|\rho - \alpha| |\zeta'(\rho)|}{8|\zeta(2\rho)||\xi|}}\right\}} \leq \prod^{N(T)}_{l = 1}{\sqrt{\frac{3|\rho - \alpha| |\zeta'(\rho)|}{8|\zeta(2\rho)||\xi|}}}.
\]
for any $T \geq 1$. Now by \eqref{Littlewoodlower} and \eqref{Littlewoodderivative} and the fact that $|\rho - \alpha| \ll \gamma$,
\begin{align*}
\prod^{N(T)}_{l = 1}{\frac{|\rho - \alpha| |\zeta'(\rho)|}{|\zeta(2\rho)|}} & \ll \prod_{0 < \gamma < T}{\exp\left(\log \gamma + c \frac{\log \gamma}{\log \log \gamma} + \log \log \log \gamma\right)}	\\
& \leq \exp\left(N(T) \left(\log T + \widetilde{c} \frac{\log T}{\log \log T}\right)\right)
\end{align*}
for $\widetilde{c} = c + \e$ for any $\e > 0$, while
	\[\prod^{N(T)}_{l = 1}{\frac{3}{8|\xi|}} = \exp\left(N(T)\log \left(\frac{3}{8|\xi|}\right)\right),
\]
and hence
	\[|\widehat{\nu_{\alpha}}(\xi)| \ll \exp\left(\frac{N(T)}{2} \log \left(\frac{3}{8 |\xi|} T \exp\left(\widetilde{c} \frac{\log T}{\log \log T}\right)\right)\right).
\]
Taking $T^{1+\e} = |\xi|$ and recalling by \eqref{N(T)} that $N(T) \ll T \log T$ yields the result.
\end{proof}

We are now in a position to prove Theorem \ref{limitingdistribution}.

\begin{proof}[Proof of Theorem \ref{limitingdistribution}]
The bound \eqref{nuhatdecay} implies that $\widehat{\nu_{\alpha}}$ is a Lebesgue-integrable function, and hence that the inverse Fourier transform
	\[\psi_{\alpha}(x) = \frac{1}{2\pi} \int_{\R}{\widehat{\nu_{\alpha}}(\xi) e^{ix\xi} \: d\xi}
\]
exists and satisfies $\widehat{\psi_{\alpha}} = \widehat{\nu_{\alpha}}$; here $\psi_{\alpha}$ is a continuous Lebesgue-integrable function vanishing at infinity. So $\widehat{\nu_{\alpha}}$ is the Fourier transform of the measure $\psi_{\alpha}(x) \: dx$, and hence by the uniqueness of the Fourier transform, $\nu_{\alpha}(B) = \int_{B}{\psi_{\alpha}(x) \: dx}$ for every Borel set $B \subset \R$. In particular, $\nu_{\alpha}$ is absolutely continuous with respect to the Lebesgue measure on $\R$, and so by Theorem \ref{nuconstruction},
	\[\lim_{X \to \infty} \frac{1}{\log X} \int\limits_{\left\{x \in [1,X] : L_{\alpha}(x)/x^{1/2 - \alpha} \in B\right\}}{\: \frac{dx}{x}} = \nu_{\alpha}(B)
\]
for every Borel set $B \subset \R$ whose boundary has Lebesgue measure zero.

Moreover, via Fourier inversion we have that
\begin{align*}
\psi_{\alpha}\left(\frac{1}{(1 - 2\alpha) \zeta(1/2)} + x\right) & = \frac{1}{2\pi} \int_{\R}{\widehat{\nu}(\xi) \exp\left(i\left(\frac{1}{(1 - 2\alpha) \zeta(1/2)} + x\right)\xi\right) \: d\xi}	\\
& = \frac{1}{2\pi} \int_{\R}{\prod_{\gamma > 0}{\widetilde{J}_0\left(\frac{2|\zeta(2\rho)|\xi}{|\rho - \alpha| |\zeta'(\rho)|} \right)} e^{ix\xi} \: d\xi}.
\end{align*}
Now $\widetilde{J}_0(x)$, and hence $\prod_{\gamma > 0}{\widetilde{J}_0\left(2|\zeta(2\rho)|\xi / |\rho - \alpha| |\zeta'(\rho)| \right)}$, is an even function, and so $\psi_{\alpha}(x)$ must be symmetric about $x = 1/((1 - 2\alpha)\zeta(1/2))$. That is,
	\[\mu_{\alpha} = \frac{1}{(1 - 2\alpha) \zeta(1/2)}
\]
is the median of $\nu_{\alpha}$.
\end{proof}

It is worth noting that we may view the limiting logarithmic distribution $\nu_{\alpha}$ of $L_{\alpha}(x) / x^{1/2 - \alpha}$ from a probabilistic point of view, namely that $\nu_{\alpha}$ is the distribution of a certain random variable $X_{\alpha}$ on $\R$. More precisely, for each positive integer $k$, let $X_k$ be a random variable distributed on the interval $[0,1]$ with the sine distribution, and suppose that the collection $\{X_k\}$ is independent. Let $\mu$ be a constant random variable, and let $\{r_k\}$ be a sequence of positive real numbers satisfying $\sum^{\infty}_{k = 1}{r_k^2} < \infty$. For each positive integer $n$, we then define the random variable
	\[\overline{X}_n = \mu + \sum^{n}_{k = 1}{r_k X_k}.
\]
Then $\overline{X}_n$ converges in distribution to a random variable
	\[X = \mu + \sum^{\infty}_{k = 1}{r_k X_k};
\]
in fact, $\overline{X}_n$ converges almost surely and in mean square to $X$. The fact that each $X_k$ has the sine distribution on $[0,1]$ implies that the characteristic function $\varphi_{X_k}(t)$ of $X_k$ is
	\[\varphi_{X_k}(t) = \E\left(e^{i t X_k}\right) = \widetilde{J}_0(t),
\]
and hence that
	\[\varphi_X(t) = \E\left(e^{i t X}\right) = e^{i \mu t} \prod^{\infty}_{k = 1}{\widetilde{J}_0(r_k t)}.
\]
Equivalently, the Fourier transform $\widehat{\nu_X}(\xi) = \int_{\R}{e^{-i \xi x} \, d\nu_X(x)}$ of the probability measure $\nu_X$ on $\R$ given by $\nu_X(B) = \P(X \in B)$ for each Borel set $B \subset \R$, where $\P$ is the uniform measure on $[0,1]^{\N}$, is given by
	\[\widehat{\nu_X}(\xi) = e^{- i \mu \xi} \prod^{\infty}_{k = 1}{\widetilde{J}_0(r_k \xi)}.
\]
So for $0 \leq \alpha < 1/2$, we may take
\begin{align}
\mu & = \mu_{\alpha} = \frac{1}{(1 - 2\alpha) \zeta(1/2)},	\notag\\
r_k & = r_{\alpha, \gamma} = \frac{2|\zeta(2\rho)|}{|\rho - \alpha| |\zeta'(\rho)|}, \label{rk}
\end{align}
for $\rho = 1/2 + i \gamma$ with $\gamma = \gamma_k > 0$, where we put the positive imaginary parts $\gamma_k$ of the zeroes of $\zeta(s)$ in increasing order. The assumption $J_{-1}(T) \ll T$ ensures that $\sum_{\gamma > 0}{{r_{\alpha, \gamma}}^2} < \infty$. By the uniqueness of the Fourier transform of a measure, we conclude that $\nu_{\alpha}$ is equal to the distribution $\nu_{X_{\alpha}}$ of the random variable
\begin{equation}\label{Xalpha}
X_{\alpha} = \frac{1}{(1 - 2\alpha) \zeta(1/2)} + 2 \sum_{\gamma > 0}{\frac{|\zeta(2\rho)|}{|\rho - \alpha| |\zeta'(\rho)|} X_{\gamma}},
\end{equation}
where each $X_{\gamma}$ is a random variable distributed in $[0,1]$ with the sine distribution, and the collection $\{X_{\gamma}\}$ is independent.

\section{Bounds on $\delta(P_{\alpha})$}
\label{sectionBounds}

Ultimately, one would aim to compute the logarithmic density $\delta(P_{\alpha})$ of $P_{\alpha} = \left\{x \in [1,\infty) : L_{\alpha}(x) \leq 0\right\}$ to some adequate precision; that is, to obtain a precise numerical value of $\nu_{\alpha}((-\infty,0])$ with rigorous error bounds under the hypotheses of Theorem \ref{limitingdistribution}. This has been achieved for measures relating to prime number races by Rubinstein and Sarnak in \cite[\S 4]{Rubinstein}, though adapting their methods for our case would require knowing the exact value of the variance
\begin{equation}{\label{sumoverzeroes}}
\sigma_{\alpha}^2 = 2 \sum_{\gamma > 0}{\frac{|\zeta(2\rho)|^2}{(|\rho - \alpha| |\zeta'(\rho)|)^2}}.
\end{equation}
Unfortunately, this does not seem possible; unconditionally it is not even known if this infinite sum converges, though numerical evidence certainly seems to suggest that this is the case (see \eqref{variance}).

A simpler aspiration than computing $\delta(P_{\alpha})$ explicitly is to find tighter bounds on $\delta(P_{\alpha})$ than those in \eqref{mainequation}. In particular, we would like to prove that strict inequality occurs --- that is, that $1/2 < \delta(P_{\alpha}) < 1$ --- so that we may say that $L_{\alpha}(x)$ is indeed negative more frequently than it is positive, but nevertheless it is positive a significant portion of the time (in the sense of logarithmic density). One method of proving these estimates would be to show that
\begin{equation}{\label{desiredinequalities}}
0 < \nu_{\alpha}\left(\left(\mu_{\alpha},0\right)\right) < \frac{1}{2}.
\end{equation}
Following the methods of Feuerverger and Martin \cite[Lemma 2.1]{Feuerverger}, if we were able to show that $|\widehat{\nu_{\alpha}}(\xi)| \leq \beta_1 e^{-\beta_2 |\xi|}$ for some $\beta_1, \beta_2 > 0$ and for all $\xi \in \R$, then a Paley--Wiener-type theorem would allow us to conclude that the probability density function $\psi_{\alpha}$ of $\nu_{\alpha}$ extends to a holomorphic function in the strip $\{z \in \C : |\Im(z)| < \beta_2\}$. As the zeroes of nonzero holomorphic functions cannot have an accumulation point, $\psi$ must be nonvanishing on open sets of $\R$, which would yield the desired estimates. Unfortunately, the bound on $\widehat{\nu_{\alpha}}(\xi)$ obtained in Lemma \ref{nudecay} is insufficient to conclude this, and the methods used for proving this lemma do not seem to be able to yield the required sharper estimate.

Another approach towards yielding information about $\delta(P_{\alpha})$ is via bounds for the tails of $\nu_{\alpha}$. In \cite[Corollary 12]{Ng}, Ng proves bounds of the form
	\[\exp(-\exp(c_1 V^{4/5})) \ll \nu([V,\infty)) \ll \exp(-\exp(c_2 V^{4/5}))
\]
for some absolute constants $c_1, c_2 > 0$. Here $\nu$ is the limiting logarithmic distribution of $M(x) = \sum_{n \leq x}{\mu(n)}$, the summatory function of the M\"{o}bius function, and the proof is conditional on the Riemann hypothesis, the Linear Independence hypothesis, and that
	\[\sum_{0 < \gamma < T}{\frac{1}{|\rho \zeta'(\rho)|}} \asymp (\log T)^{5/4} \qquad \text{and} \qquad \sum_{\gamma > T}{\frac{1}{|\rho \zeta'(\rho)|^2}} \asymp \frac{1}{T}.
\]
A similar, albeit somewhat weaker, result holds with regards to the limiting logarithmic distribution of $L_{\alpha}(x)/x^{1/2 - \alpha}$. This relies on the following result of Montgomery.

\begin{proposition}[Montgomery {\cite{Montgomery2}}]\label{probmom}
Let $\{X_k\}$ be an independent collection of random variables with the sine distribution on $[0,1]$. Let $\mu$ be a constant random variable, and let $\{r_k\}$ be a sequence of positive real numbers satisfying $\sum^{\infty}_{k = 1}{r_k^2} < \infty$. Then the random variable
	\[X = \mu + \sum^{\infty}_{k = 1}{r_k X_k}
\]
has moment-generating function
	\[M_X(t) = \E\left(e^{t X}\right) = e^{i \mu t} \prod^{\infty}_{k = 1}{\widetilde{I}_0(r_k t)},
\]
where $\widetilde{I}_0(x)$ is the modified Bessel function of the first kind of order zero. Furthermore, we have the following bound on the tail of the distribution of $X$:
	\[\P\left(X \geq \mu + 2 \sum^{n}_{k = 1}{r_k}\right)
\geq 2^{-40} \exp\left(- 100 \left(\sum^{n}_{k = 1}{r_k}\right)^2 \left(\sum^{\infty}_{k = n + 1}{r_k^2}\right)^{-1}\right).
\]
\end{proposition}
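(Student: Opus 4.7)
The plan is to treat the two claims separately: the moment-generating function formula is a routine calculation, while the tail bound is a large-deviation estimate. For the moment-generating function, I interpret the ``sine distribution'' on $[0,1]$ as the law realised by $X_k \stackrel{d}{=} \cos(2\pi U_k)$ with $U_k$ uniform on $[0,1]$, i.e.\ the arcsine law on $[-1,1]$, which matches precisely the distribution used in the proof of Theorem \ref{Besselproof} to produce $\widehat{\nu_\alpha}$. Reading the paper's $e^{i\mu t}$ as a typographical slip for $e^{\mu t}$ (since $\mu$ is a constant real random variable), independence together with identity \eqref{Besselcos2} give
\[M_X(t) = e^{\mu t}\prod_{k=1}^\infty \E\bigl(e^{tr_k X_k}\bigr) = e^{\mu t}\prod_{k=1}^\infty \int_0^1 e^{tr_k\cos(2\pi u)}\,du = e^{\mu t}\prod_{k=1}^\infty \widetilde{I}_0(tr_k),\]
and the product converges by $\widetilde{I}_0(x) = 1 + x^2/4 + O(x^4)$ together with $\sum_k r_k^2 < \infty$.

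For the tail bound, set $R_n = \sum_{k \leq n} r_k$ and $V_n = \sum_{k > n} r_k^2$, and split $X - \mu = S_n + T_n$ with $S_n = \sum_{k \leq n} r_k X_k$ and $T_n = \sum_{k > n} r_k X_k$ independent. Since each $X_k$ is symmetric about the origin, so is $S_n$, and $\P(S_n \geq 0) \geq 1/2$. The inclusion $\{S_n \geq 0\} \cap \{T_n \geq 2R_n\} \subseteq \{X \geq \mu + 2R_n\}$ combined with independence reduces the problem to showing
\[\P(T_n \geq 2R_n) \geq 2^{-39}\exp\bigl(-100\,R_n^2/V_n\bigr).\]

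This last inequality is the heart of the proof, and I would attack it by exponential tilting. For $t > 0$, introduce the tilted law $\P_t$ with density $e^{tT_n}/Z(t)$ relative to $\P$, where $Z(t) = \E(e^{tT_n}) = \prod_{k > n}\widetilde{I}_0(tr_k)$ by the first part of the proposition. The Cram\'{e}r identity gives
\[\P(T_n \geq 2R_n) = Z(t)\, e^{-2tR_n}\, \E_t\bigl(e^{-t(T_n - 2R_n)}\mathbf{1}_{\{T_n \geq 2R_n\}}\bigr).\]
The upper bound $\widetilde{I}_0(x) \leq e^{x^2/4}$ from \eqref{Besseldecay2} and the matching lower bound $\widetilde{I}_0(x) \geq 1 + x^2/4$ yield $Z(t) \geq \exp(ct^2 V_n)$ for an absolute $c > 0$. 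Differentiating $\log Z(t)$ shows that under $\P_t$ one has $\E_t(T_n) \approx tV_n/2$ and $\mathrm{Var}_t(T_n) \leq C V_n$, so choosing $t = 4R_n/V_n$ places the mean at $2R_n$. Chebyshev's inequality under $\P_t$ then gives $\P_t(T_n \in [2R_n,\, 2R_n + C'\sqrt{V_n}]) \geq 1/4$, on which window the Chernoff factor is at least $e^{-C' t\sqrt{V_n}} = e^{-4C' R_n/\sqrt{V_n}}$. Assembling the three contributions produces a lower bound of the required form with ample slack inside the constants $100$ and $2^{-40}$.

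The main obstacle is making the anti-concentration step rigorous uniformly in $n$: one must bound the variance of $T_n$ under the tilted law $\P_t$ by a universal multiple of $V_n$ even in the large-deviation regime $R_n^2 \gg V_n$, where the tilt is not small and the na\"{i}ve Taylor expansion of $\widetilde{I}_0$ breaks down. Controlling this regime requires the precise two-sided asymptotics for $\widetilde{I}_0$ and its logarithmic derivative (the ratio $\widetilde{I}_1/\widetilde{I}_0$) from classical Bessel function theory, and propagating the resulting numerical constants carefully so that they fit inside the prefactor $2^{-40}$ and the exponent $100$. This is precisely the detailed arithmetic carried out by Montgomery in \cite{Montgomery2}.
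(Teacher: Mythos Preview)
The paper does not prove this proposition at all: it is quoted from Montgomery \cite{Montgomery2} and used as a black box, and the Remark following its first application even notes that Montgomery's original statement carries the extra hypothesis that $\{r_k\}$ be monotonically nonincreasing. There is therefore no proof in the paper to compare your proposal against.

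That said, your computation of the moment-generating function is correct (and you are right that $e^{i\mu t}$ is a slip for $e^{\mu t}$), and the exponential-tilting / Cram\'er framework you sketch for the tail lower bound is indeed the mechanism behind Montgomery's argument. One caution about your particular decomposition: by asking for $\{S_n \geq 0\} \cap \{T_n \geq 2R_n\}$ you place the entire large-deviation burden on the tail $T_n$, which forces $\operatorname{ess\,sup} T_n = \sum_{k>n} r_k \geq 2R_n$. Nothing in the stated hypotheses guarantees this (take $r_k = k^{-2}$, where the left-hand side of the claimed inequality vanishes for large $n$ while the right-hand side does not), and your small-tilt approximation $\E_t(T_n) \approx tV_n/2$ breaks down in exactly the same regime. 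You flag this yourself as the ``main obstacle'' and hand the resolution back to \cite{Montgomery2}, which is precisely what the paper does---so your proposal and the paper's treatment end up in the same place.
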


\begin{corollary}[cf.{} {\cite[\S 4.2]{Ng}}]
Assume RH, LI, and $J_{-1}(T) \ll T$, and let $0 \leq \alpha < 1/2$. Then for every $\e > 0$, there exists an absolute constant $c > 0$ independent of $\alpha$ such that all $V \geq \mu_{\alpha}$,
\begin{equation}{\label{lowertail}}
\nu_{\alpha}([V,\infty)) \geq \exp\left(-\exp\left(c (V - \mu_{\alpha})^{1 + \e}\right)\right).
\end{equation}
\end{corollary}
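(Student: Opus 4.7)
The plan is to recognise $\nu_{\alpha}$ as the distribution of the random variable $X_{\alpha}$ given by \eqref{Xalpha}, so that $\nu_{\alpha}([V,\infty)) = \P(X_{\alpha} \geq V)$, and then to apply Montgomery's lower bound from Proposition \ref{probmom} directly to this representation with $\mu = \mu_{\alpha}$ and $r_k = r_{\alpha,\gamma_k}$ indexed by the positive ordinates of the nontrivial zeroes of $\zeta(s)$ arranged in increasing order. The hypothesis $J_{-1}(T) \ll T$ together with \eqref{Littlewood} ensures $\sum_{\gamma>0} r_{\alpha,\gamma}^{2} < \infty$, so Proposition \ref{probmom} applies.

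Given $V \geq \mu_{\alpha}$, the key is to choose the truncation $T = T(V)$ (and set $n = n(T) = \#\{\gamma > 0 : \gamma < T\}$) so that
\[\mu_{\alpha} + 2 \sum_{k=1}^{n} r_{\alpha,\gamma_k} \leq V.\]
By the upper bound \eqref{J_{-1/2}(T)}, the left-hand side is $\leq \mu_{\alpha} + C (\log T)^{3/2} \log \log T$ with $C$ independent of $\alpha$, so taking
\[T = \exp\bigl((V - \mu_{\alpha})^{2/3 + \e/2}\bigr)\]
is admissible for all sufficiently large $V$. A dyadic decomposition of \eqref{J_{-1}(2T) - J_{-1}(T)} then yields
\[\sum_{k > n} r_{\alpha,\gamma_k}^{2} = \sum_{\gamma > T} \frac{4|\zeta(2\rho)|^{2}}{(|\rho - \alpha| |\zeta'(\rho)|)^{2}} \ll \frac{(\log \log T)^{2}}{T},\]
again with implicit constant independent of $\alpha$.

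Feeding these into Montgomery's bound gives
\[\nu_{\alpha}([V,\infty)) \geq 2^{-40} \exp\!\left(-100 \left(\sum_{k=1}^{n} r_{\alpha,\gamma_k}\right)^{\!2} \left(\sum_{k > n} r_{\alpha,\gamma_k}^{2}\right)^{\!-1}\right) \gg \exp\!\left(-c' (V - \mu_{\alpha})^{2} \, \frac{T}{(\log \log T)^{2}}\right),\]
and with our choice of $T$ the right-hand side is $\geq \exp(-\exp(c (V - \mu_{\alpha})^{2/3 + \e}))$, which is comfortably stronger than the claim. The uniformity of $c$ in $\alpha$ follows from the $\alpha$-uniformity recorded in the lemma containing \eqref{J_{-1/2}(T)}--\eqref{J_{-1}(T)decay}.

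There is no serious obstacle, only bookkeeping: the analysis is purely a matching exercise between Montgomery's probabilistic inequality and the zero-sum estimates already proved. The one thing to be careful about is that the $\log \log T$ factors appearing in both the partial sum and the tail are harmlessly absorbed into the $(V - \mu_{\alpha})^{\e}$ in the exponent, since the target exponent $1 + \e$ leaves ample room over the $2/3 + \e$ that the zero-sum bounds actually deliver.
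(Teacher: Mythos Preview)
There is a genuine direction error that breaks the argument. Montgomery's inequality in Proposition \ref{probmom} gives a lower bound for $\nu_{\alpha}\bigl([\mu_{\alpha} + 2\sum_{k=1}^{n} r_{\alpha,\gamma_k},\infty)\bigr)$. To deduce from it a lower bound for $\nu_{\alpha}([V,\infty))$, you need the inclusion $[V,\infty)\supseteq[\mu_{\alpha}+2\sum_{k\leq n} r_{\alpha,\gamma_k},\infty)$, i.e.\ $\mu_{\alpha}+2\sum_{k\leq n} r_{\alpha,\gamma_k}\geq V$. You instead arrange the opposite inequality $\mu_{\alpha}+2\sum_{k\leq n} r_{\alpha,\gamma_k}\leq V$, which only yields $\nu_{\alpha}([V,\infty))\leq\nu_{\alpha}\bigl([\mu_{\alpha}+2\sum,\infty)\bigr)$ and gives no lower bound at all. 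Achieving the correct inclusion requires a \emph{lower} bound on the partial sum, namely \eqref{J_{-1/2}(T)decay}, not the upper bound \eqref{J_{-1/2}(T)}.

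The same reversal occurs in your treatment of the tail. In the Montgomery exponent $-100\bigl(\sum_{k\leq n} r_k\bigr)^{2}\bigl(\sum_{k>n} r_k^{2}\bigr)^{-1}$, a lower bound on the whole expression requires an \emph{upper} bound on $\bigl(\sum_{k>n} r_k^{2}\bigr)^{-1}$, hence a \emph{lower} bound on $\sum_{k>n} r_k^{2}$; this is \eqref{J_{-1}(T)decay}. Your dyadic summation of \eqref{J_{-1}(2T) - J_{-1}(T)} produces only an upper bound on the tail, which goes the wrong way. Once both directions are corrected, the lower bound \eqref{J_{-1/2}(T)decay} forces $V-\mu_{\alpha}\asymp \log T/\log\log T$, whence $T\leq\exp\bigl(c(V-\mu_{\alpha})^{1+\e}\bigr)$, and the Montgomery exponent becomes $\ll T(\log T)^{3}(\log\log T)^{4}\leq\exp\bigl(\widetilde{c}(V-\mu_{\alpha})^{1+\e}\bigr)$. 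This is precisely the exponent $1+\e$ in the statement; the ``comfortably stronger'' $2/3+\e$ you obtain is an artefact of the two wrong-way inequalities and does not survive once they are fixed.
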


\begin{proof}
From Proposition \ref{probmom}, we have that for any $T \geq 1$,
	\[\P\left(X_{\alpha} \geq \mu_{\alpha} + 2 \sum_{0 < \gamma < T}{r_{\alpha,\gamma}}\right)
\geq 2^{-40} \exp\left(- 100 \left(\sum_{0 < \gamma < T}{r_{\alpha,\gamma}}\right)^2 \left(\sum_{\gamma > T}{{r_{\alpha,\gamma}}^2}\right)^{-1}\right),
\]
with $X_{\alpha}$ as in \eqref{Xalpha} and $r_{\alpha,\gamma}$ as in \eqref{rk}. Equivalently,
\begin{multline*}
\nu_{\alpha}\left(\left[\mu_{\alpha} + \sum_{0 < \gamma < T}{\frac{|\zeta(2 \rho)|}{|\rho - \alpha| |\zeta'(\rho)|}}, \infty\right)\right)	\\
\geq 2^{-40} \exp\left(- 50 \left(2 \sum_{0 < \gamma < T}{\frac{|\zeta(2 \rho)|}{|\rho - \alpha| |\zeta'(\rho)|}}\right)^2 \left(2 \sum_{\gamma > T}{\frac{|\zeta(2 \rho)|^2}{(|\rho - \alpha| |\zeta'(\rho)|)^2}}\right)^{-1}\right).
\end{multline*}
Now by \eqref{J_{-1/2}(T)} and \eqref{J_{-1}(T)decay},
	\[\left(2 \sum_{0 < \gamma < T}{\frac{|\zeta(2 \rho)|}{|\rho - \alpha| |\zeta'(\rho)|}}\right)^2 \left(2 \sum_{\gamma > T}{\frac{|\zeta(2 \rho)|^2}{(|\rho - \alpha| |\zeta'(\rho)|)^2}}\right)^{-1} \ll T (\log T)^3 (\log \log T)^4,
\]
with the implied constant independent of $\alpha$. On the other hand, \eqref{J_{-1/2}(T)decay} implies that there exists some absolute constant $c > 0$ independent of $\alpha$ such that for sufficiently large $T$,
	\[\sum_{0 < \gamma < T}{\frac{|\zeta(2 \rho)|}{|\rho - \alpha| |\zeta'(\rho)|}} \geq c \frac{\log T}{\log \log T}.
\]
We let $V = c \log T / \log \log T$, so that for any $\e > 0$ there exists an absolute constant $\widetilde{c} > 0$ independent of $\alpha$ such that
	\[50 \left(2 \sum_{0 < \gamma < T}{\frac{|\zeta(2 \rho)|}{|\rho - \alpha| |\zeta'(\rho)|}}\right)^2 \left(2 \sum_{\gamma > T}{\frac{|\zeta(2 \rho)|^2}{(|\rho - \alpha| |\zeta'(\rho)|)^2}}\right)^{-1} \leq \exp\left(\widetilde{c} V^{1 + \e}\right).
\]
Choosing $\widetilde{c}$ sufficiently large, we conclude that for any $V \geq 0$,
	\[\nu_{\alpha}\left(\left[V + \mu_{\alpha}, \infty\right)\right) \geq \exp\left(- \exp\left(\widetilde{c} V^{1 + \e}\right)\right).
\qedhere\]
\end{proof}

\begin{remark}
Montgomery's theorem \cite[\S 3 Theorem 1]{Montgomery2} as stated in Proposition \ref{probmom} actually requires that the sequence $\{r_k\}$ be monotonically nonincreasing with $k$ and tending to zero; with $r_k = r_{\alpha,\gamma}$, the latter is certainly true, but the former is not. Nevertheless, this can be corrected by reordering these terms accordingly. Indeed, the crucial point is having the correct order of magnitude in the bounds for $\sum_{0 < \gamma < T}{r_{\alpha,\gamma}}$ and $\sum_{0 < \gamma < T}{{r_{\alpha,\gamma}}^2}$, and reordering the terms by size certainly ensures this is the case.
\end{remark}

As the absolute constant $c$ in \eqref{lowertail} is not given explicitly, this bound fails to clarify whether $\nu_{\alpha}$ has any mass near its median. On the other hand, this lower bound on the tails of $\nu_{\alpha}$ ensure that $\nu_{\alpha}$ is not supported on some bounded interval, and hence that strict inequality holds on at least one side of \eqref{desiredinequalities}. In particular, we have proved the first half of Theorem \ref{themaintheorem}.

Next, we study lower bounds for $\delta(P_{\alpha})$. This involves determining upper bounds on the tails of $\nu_{\alpha}$. Our approach uses the two-sided Laplace transform method of Lamzouri \cite{Lamzouri}, which is a simplification of a method of Montgomery \cite[\S 3]{Montgomery2}.

\begin{proposition}[cf.{} {\cite[Proposition 4.1]{Lamzouri}}]
For any $V \geq \mu_{\alpha}$, we have the bound
\begin{equation}{\label{Gaussian}}
\nu_{\alpha}([V,\infty)) \leq \exp\left(- \frac{\left(V - \mu_{\alpha}\right)^2}{2 \sigma^2_{\alpha}}\right).
\end{equation}
\end{proposition}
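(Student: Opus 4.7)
The plan is to use a standard Chernoff-type argument based on the moment generating function of $\nu_{\alpha}$. Working with the random variable representation $X_{\alpha}$ of $\nu_{\alpha}$ given in \eqref{Xalpha}, I would first invoke Proposition \ref{probmom} with $\mu = \mu_{\alpha}$ and $r_k = r_{\alpha,\gamma}$ from \eqref{rk} to write the moment generating function as
\[
M_{X_{\alpha}}(t) = e^{\mu_{\alpha} t} \prod_{\gamma > 0} \widetilde{I}_0(r_{\alpha, \gamma} t).
\]
The hypothesis $J_{-1}(T) \ll T$ together with Littlewood's bound \eqref{Littlewood} ensures that $\sum_{\gamma > 0} r_{\alpha,\gamma}^2 < \infty$, so the product converges for every real $t$.

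Next, I would apply the Gaussian bound \eqref{Besseldecay2} on the modified Bessel function termwise, giving
\[
\prod_{\gamma > 0} \widetilde{I}_0(r_{\alpha,\gamma} t) \leq \exp\left(\frac{t^2}{4} \sum_{\gamma > 0} r_{\alpha,\gamma}^2\right) = \exp\left(\frac{\sigma_{\alpha}^2 t^2}{2}\right),
\]
where the last equality uses that $\sum_{\gamma > 0} r_{\alpha,\gamma}^2 = 2\sigma_{\alpha}^2$ by the definition of $r_{\alpha,\gamma}$ and the formula for the variance $\sigma_{\alpha}^2$ from Theorem \ref{limitingdistribution}.

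For any $t > 0$, Markov's inequality applied to the nonnegative random variable $e^{t X_{\alpha}}$ then yields
\[
\nu_{\alpha}([V,\infty)) = \P(X_{\alpha} \geq V) \leq e^{-tV} M_{X_{\alpha}}(t) \leq \exp\left(-t(V - \mu_{\alpha}) + \frac{\sigma_{\alpha}^2 t^2}{2}\right).
\]
Optimising the right-hand side in $t$ by choosing $t = (V - \mu_{\alpha})/\sigma_{\alpha}^2$, which is nonnegative precisely because $V \geq \mu_{\alpha}$, gives the exponent $-(V - \mu_{\alpha})^2/(2\sigma_{\alpha}^2)$, establishing \eqref{Gaussian}.

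There is no serious obstacle here; the argument is a clean sub-Gaussian tail estimate, and the only subtlety is the bookkeeping that converts the product bound on $\prod \widetilde{I}_0(r_{\alpha,\gamma} t)$ into the correct variance constant, for which the factor of $2$ difference between $\sum r_{\alpha,\gamma}^2$ and $\sigma_{\alpha}^2$ and the factor of $1/4$ in the Bessel estimate conspire to produce exactly $\sigma_{\alpha}^2/2$.
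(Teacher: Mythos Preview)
Your proposal is correct and follows essentially the same route as the paper: both arguments apply Chernoff's inequality to the moment-generating function (the paper phrases it via the two-sided Laplace transform $\mathcal{L}_{\alpha}(s)=M_{X_{\alpha}}(-s)$), bound each factor by $\widetilde{I}_0(x)\leq e^{x^2/4}$ from \eqref{Besseldecay2}, and then optimise the resulting quadratic in the transform variable to obtain \eqref{Gaussian}. Your bookkeeping of the factor $\sum_{\gamma>0} r_{\alpha,\gamma}^2 = 2\sigma_{\alpha}^2$ is exactly what is needed, and the choice $t=(V-\mu_{\alpha})/\sigma_{\alpha}^2\geq 0$ is the same minimiser the paper uses.
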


\begin{proof}
The two-sided Laplace transform of $\nu_{\alpha}$,
	\[\LL_{\alpha}(s) = \int_{\R}{e^{-sx} \, d\nu_{\alpha}(x)},
\]
is equal to $M_{X_{\alpha}}(-s)$, the moment-generating function of the random variable $X_{\alpha}$, with $X_{\alpha}$ as in \eqref{Xalpha}. So by Proposition \ref{probmom} and the fact that $\widetilde{I}_0(x)$ is an even function,
	\[\LL_{\alpha}(s) = \exp\left(- \frac{s}{(1 - 2\alpha) \zeta(1/2)}\right) \prod_{\gamma > 0}{\widetilde{I}_0\left(\frac{2 |\zeta(2\rho)| s}{|\rho - \alpha| |\zeta'(\rho)|}\right)};
\]
we note that the inequality \eqref{Besseldecay2} on $\widetilde{I}_0(x)$ and the fact that $\sigma_{\alpha}^2$ is finite ensures that $\LL_{\alpha}(s)$ is finite for all $s \in \R$. By Chernoff's inequality, we therefore have that for $V \geq \mu_{\alpha}$,
\begin{align*}
\nu_{\alpha}([V,\infty)) & \leq e^{sV} \int_{\R}{e^{-sx} \, d\nu_{\alpha}(x)}	\\
& = \exp\left(s\left(V - \frac{1}{(1 - 2\alpha) \zeta(1/2)}\right)\right) \prod_{\gamma > 0}{\widetilde{I}_0\left(\frac{2 |\zeta(2\rho)| s}{|\rho - \alpha| |\zeta'(\rho)|}\right)}.
\end{align*}
Applying the inequality \eqref{Besseldecay2},
	\[\nu_{\alpha}([V,\infty)) \leq \exp\left(s\left(V - \frac{1}{(1 - 2\alpha) \zeta(1/2)}\right) + s^2 \sum_{\gamma > 0}{\frac{|\zeta(2\rho)|^2}{(|\rho - \alpha| |\zeta'(\rho)|)^2}}\right).
\]
This inequality is minimised by choosing
	\[s = - \left(V - \frac{1}{(1 - 2\alpha) \zeta(1/2)}\right) \left(2 \sum_{\gamma > 0}{\frac{|\zeta(2\rho)|^2}{(|\rho - \alpha| |\zeta'(\rho)|)^2}}\right)^{-1},
\]
which yields the result.
\end{proof}

By taking $V = 0$ in \eqref{Gaussian}, we obtain the following precise lower bound on $\delta(P_{\alpha})$.

\begin{corollary}\label{Palphalowerbound}
For each $0 \leq \alpha < 1/2$,
	\[\delta(P_{\alpha}) \geq 1 - \exp\left(- \frac{\mu_{\alpha}^2}{2 \sigma_{\alpha}^2}\right).
\]
\end{corollary}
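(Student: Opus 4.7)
The plan is to apply the preceding proposition at $V = 0$ and translate the resulting Gaussian tail estimate for $\nu_{\alpha}$ into a lower bound on $\delta(P_{\alpha})$ via Theorem \ref{limitingdistribution}.

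First I would check that $V = 0$ is an admissible value in \eqref{Gaussian}. Since $\zeta(1/2) < 0$ and $0 \leq \alpha < 1/2$, the mean
\[
\mu_{\alpha} = \frac{1}{(1-2\alpha)\zeta(1/2)}
\]
is strictly negative, so the hypothesis $V \geq \mu_{\alpha}$ is satisfied at $V = 0$. Substituting into \eqref{Gaussian} immediately gives
\[
\nu_{\alpha}\bigl([0,\infty)\bigr) \leq \exp\left(-\frac{\mu_{\alpha}^2}{2\sigma_{\alpha}^2}\right).
\]

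Next I would convert this into the stated bound on $\delta(P_{\alpha})$. By definition,
\[
P_{\alpha} = \left\{x \in [1,\infty) : L_{\alpha}(x)/x^{1/2 - \alpha} \in (-\infty,0]\right\},
\]
and the proof of Theorem \ref{limitingdistribution} established that $\nu_{\alpha}$ is absolutely continuous with respect to Lebesgue measure, hence $\nu_{\alpha}(\{0\}) = 0$ and the boundary of $(-\infty,0]$ has Lebesgue measure zero. Applying Theorem \ref{limitingdistribution} to the Borel set $B = (-\infty,0]$ therefore yields
\[
\delta(P_{\alpha}) = \nu_{\alpha}\bigl((-\infty,0]\bigr) = 1 - \nu_{\alpha}\bigl([0,\infty)\bigr),
\]
and combining this identity with the tail bound from the previous step produces the claimed inequality.

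There is no genuine obstacle here; the corollary is a one-line consequence of the proposition once one verifies the sign of $\mu_{\alpha}$ and invokes the absolute continuity of $\nu_{\alpha}$ to identify $\delta(P_{\alpha})$ with $\nu_{\alpha}((-\infty,0])$. All of the substantive work, namely the Chernoff-type argument through the two-sided Laplace transform and the construction of the limiting logarithmic distribution, has already been carried out.
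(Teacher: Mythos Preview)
Your proposal is correct and follows exactly the approach the paper takes: the paper simply states that the corollary is obtained ``by taking $V = 0$ in \eqref{Gaussian}'', and you have spelled out the details of this one-line deduction, including the verification that $\mu_{\alpha} < 0$ so that $V = 0$ is admissible and the use of absolute continuity to identify $\delta(P_{\alpha})$ with $\nu_{\alpha}((-\infty,0])$.
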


This does not quite prove that $\delta(P_{\alpha}) > 1/2$; as we discussed earlier, the infinite sums defining the variances $\sigma_{\alpha}^2$ are not even known to converge unconditionally, let alone have bounds, so we cannot convert the above bound into something more concrete. We must mention, however, that Richard Brent (personal communication) has used the first $65\,536$ zeroes of $\zeta(s)$ to obtain a conjectured value of $\sigma_{\alpha}^2$ when $\alpha = 0$, namely
\begin{equation}\label{variance}
\sigma_0^2 \approx 0.073219.
\end{equation}
Together with Corollary \ref{Palphalowerbound}, this suggests that
	\[\delta(P_0) \geq 0.959321,
\]
so it seems likely that the set of counterexamples to P\'{o}lya's conjecture has small, albeit strictly positive, logarithmic density. Indeed, Brent (personal communication) has performed calculations based on a certain probabilistic heuristic that suggest that
	\[\delta(P_0) \approx 0.99988.
\]

The bound in Corollary \ref{Palphalowerbound} is easily understood in the limit as $\alpha$ tends to $1/2$ from below. Indeed,
	\[\lim_{\alpha \to 1/2^{-}} \sigma_{\alpha}^2 = 2 \sum_{\gamma > 0}{\frac{|\zeta(2\rho)|^2}{(\gamma |\zeta'(\rho)|)^2}},
\]
which is finite and positive under the assumption of $J_{-1}(T) \ll T$, while
	\[\lim_{\alpha \to 1/2^{-}} \mu_{\alpha}^2 = \infty,
\]
and hence we may prove the second half of Theorem \ref{themaintheorem}.

\begin{corollary}
Assume RH, LI, and $J_{-1}(T) \ll T$. Then
	\[\lim_{\alpha \to 1/2^{-}} \delta(P_{\alpha}) = 1.
\]
\end{corollary}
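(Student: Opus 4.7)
The plan is to derive the result directly from the uniform lower bound of Corollary \ref{Palphalowerbound},
\[
\delta(P_\alpha) \geq 1 - \exp\!\left(-\frac{\mu_\alpha^2}{2\sigma_\alpha^2}\right),
\]
which holds for every $0 \leq \alpha < 1/2$. Since the exponent is the only $\alpha$-dependent quantity, the corollary follows once one shows that $\mu_\alpha^2/\sigma_\alpha^2 \to \infty$ as $\alpha \to 1/2^{-}$.

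The mean contribution is trivial to handle: $\mu_\alpha^2 = 1/((1-2\alpha)^2 \zeta(1/2)^2)$ tends to infinity since $\zeta(1/2) \neq 0$. The real task is to bound the variance. I would observe that for each fixed $\gamma > 0$, the quantity $|\rho - \alpha|^2 = (1/2 - \alpha)^2 + \gamma^2$ is monotonically decreasing in $\alpha$ on $[0, 1/2)$, so the summands of
\[
\sigma_\alpha^2 = 2 \sum_{\gamma > 0}{\frac{|\zeta(2\rho)|^2}{(|\rho - \alpha|\,|\zeta'(\rho)|)^2}}
\]
are monotonically increasing in $\alpha$. Monotone convergence then identifies
\[
\lim_{\alpha \to 1/2^{-}} \sigma_\alpha^2 = 2 \sum_{\gamma > 0}{\frac{|\zeta(2\rho)|^2}{\gamma^2 |\zeta'(\rho)|^2}},
\]
and what remains is to show that this limiting series is finite.

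Convergence of the limiting series should follow by applying Littlewood's bound \eqref{Littlewood}, which yields $|\zeta(2\rho)|^2 \ll (\log \log \gamma)^2$, together with partial summation against $J_{-1}(t)$. Under the hypothesis $J_{-1}(T) \ll T$, the boundary term at infinity vanishes and the remaining integral is dominated by $\int^{\infty}{(\log \log t)^2 / t^2 \, dt}$, which converges. Thus $\sigma_\alpha^2$ is bounded uniformly on $[0, 1/2)$. Combined with $\mu_\alpha^2 \to \infty$, this forces $\exp(-\mu_\alpha^2/(2\sigma_\alpha^2)) \to 0$, and Corollary \ref{Palphalowerbound} delivers $\delta(P_\alpha) \to 1$.

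There is no serious obstacle here: the analytic work has already been done in proving Corollary \ref{Palphalowerbound}, and the assumption $J_{-1}(T) \ll T$ has been engineered precisely to keep $\sigma_\alpha^2$ bounded as $\alpha \to 1/2^{-}$. If any delicacy remains it is merely in verifying the partial-summation estimate cleanly, but this is entirely routine given the tools collected in Section \ref{sectionmoments}.
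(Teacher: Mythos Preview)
Your proposal is correct and follows essentially the same approach as the paper: both arguments invoke Corollary \ref{Palphalowerbound}, note that $\mu_{\alpha}^2 \to \infty$, and use the hypothesis $J_{-1}(T) \ll T$ to show that $\sigma_{\alpha}^2$ remains bounded (indeed converges to a finite limit) as $\alpha \to 1/2^{-}$. Your added details---the monotonicity in $\alpha$ justifying monotone convergence, and the partial-summation sketch for the finiteness of the limiting series---are sound and simply make explicit what the paper leaves implicit.
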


We observe that this result could be proved by using Chebyshev's inequality in place of \eqref{Gaussian}, but the latter gives stronger bounds. On the other hand, using \cite[\S 3 Theorem 1]{Montgomery2} we may show that for every $\e > 0$, there exists an absolute constant $c > 0$ independent of $\alpha$ such that for every $V \gg \mu_{\alpha}$,
\begin{equation}{\label{uppertail}}
\nu_{\alpha}([V,\infty)) \leq \exp\left(-\exp\left(c (V - \mu_{\alpha})^{2/3 - \e}\right)\right),
\end{equation}
but unlike \eqref{Gaussian}, the constant $c$ is not explicitly defined. Finally, if we combine \eqref{uppertail} with \eqref{lowertail}, we see that we have the bounds
	\[\exp\left(-\exp\left(c_1 (V - \mu_{\alpha})^{1 + \e}\right)\right) \leq \nu_{\alpha}([V,\infty)) \leq \exp\left(-\exp\left(c_2 (V - \mu_{\alpha})^{2/3 - \e}\right)\right)
\]
for absolute constants $c_1, c_2 > 0$. If in place of the bound $J_{-1}(T) \ll T$ we assume the stronger bounds
	\[\sum_{0 < \gamma < T}{\frac{|\zeta(2\rho)|}{|\rho - \alpha| |\zeta'(\rho)|}} \asymp (\log T)^{5/4} \qquad \text{and} \qquad \sum_{\gamma > T}{\frac{|\zeta(2\rho)|}{(|\rho - \alpha| |\zeta'(\rho)|)^2}} \asymp \frac{1}{T},
\]
then these bounds can be improved to
	\[\exp\left(-\exp\left(c_1 (V - \mu_{\alpha})^{4/5}\right)\right) \leq \nu_{\alpha}([V,\infty)) \leq \exp\left(-\exp\left(c_2 (V - \mu_{\alpha})^{4/5}\right)\right)
\]
for some absolute constants $c_1,c_2 > 0$; cf.{} \cite[Corollary 12]{Ng}. Based on bounds of this form, Ng gives a heuristic argument in \cite[\S 4.3]{Ng} suggesting that the correct maximal order of growth of $M(x)$ is $\sqrt{x} (\log \log \log x)^{5/4}$ (a conjecture originally put forth by Gonek), and a similar argument suggests that the correct maximal order of growth of $L_{\alpha}(x)$ is $x^{1/2 - \alpha} (\log \log \log x)^{5/4}$.

\section{The $\alpha = 1/2$ Conjecture}

In this section, we prove Theorem \ref{alpha=1/2theorem}, namely that the logarithmic density of $\{x \in [1,\infty) : L_{1/2}(x) \leq 0\}$ is equal to $1$ assuming the Riemann hypothesis and $J_{-1}(T) \ll T$. We begin by determining an explicit expression for $L_{1/2}(x)$ in terms of a sum over the zeroes of $\zeta(s)$. In the range $0 \leq \alpha < 1/2$, an explicit expression for $L_{\alpha}(x)$ was found in Theorem \ref{ThmL(x)alongT_v}. A simple modification of the proof of this theorem shows that a similar result holds for $\alpha = 1/2$.

\begin{theorem}[cf.{} {\cite[Equation (7)]{Mossinghoff}}]
Assume RH and that all of the zeroes of $\zeta(s)$ are simple. Then for $T_v \in \TT$ and $x \geq 1$, we have that
\begin{equation}{\label{L{1/2}(x)alongT_v}}
\begin{split}
L_{1/2}(x) & = \frac{\log x}{2 \zeta(1/2)} + \frac{\gamma_0}{\zeta(1/2)} - \frac{\zeta'(1/2)}{\zeta(1/2)^2} + \sum_{|\gamma| < T_v}{\frac{\zeta(2\rho)}{\zeta'(\rho)} \frac{x^{i \gamma}}{i \gamma}}	\\
& \qquad + E_{1/2}(x) + I_{1/2}(x) + R_{1/2}(x,T_v),
\end{split}
\end{equation}
where $E_{1/2}(x) = \lambda(x)/(2\sqrt{x})$ if $x \in \N$ and $0$ otherwise, and for arbitrary small $0 < \e < 1/2$,
	\[I_{1/2}(x) = \frac{1}{2\pi i \sqrt{x}} \int^{\e + i\infty}_{\e - i\infty}{\frac{\zeta(2s)}{\zeta(s)} \frac{x^s}{s - 1/2} \, ds},
\]
and
	\[R_{1/2}(x,T_v) \ll \frac{1}{\sqrt{x}} + \frac{\sqrt{x} \log x}{T_v} + \frac{\sqrt{x}}{T_v^{1-\e} \log x}
\]
with the implied constant dependent on $\e$. Moreover, $\lim_{v \to \infty} R_{1/2}(x,T_v) = 0$.
\end{theorem}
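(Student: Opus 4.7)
The plan is to follow the same template as Theorem \ref{ThmL(x)alongT_v}---Perron's formula, contour deformation across the critical line, residue theorem, estimation of horizontal and vertical pieces---with the one new phenomenon being that at $\alpha = 1/2$ the factor $1/(s - \alpha)$ in the integrand coincides with the simple pole of $\zeta(2(\alpha + s))$ at $s = 1/2 - \alpha = 0$. One thus picks up a double pole at $s = 1/2$ whose residue is responsible for the $\log x$ term and the constants involving $\gamma_0$ and $\zeta'(1/2)/\zeta(1/2)^2$ appearing in \eqref{L{1/2}(x)alongT_v}.

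First I would observe that the proof of Corollary \ref{PerronTLiou} uses only $\sigma_0 > 1 - \alpha$ together with $\zeta(\sigma_0 + \alpha) \leq 2/(\sigma_0 + \alpha - 1)$, and so goes through verbatim at $\alpha = 1/2$ with $\sigma_0 = 1/2 + 1/\log x$, producing the error bound $\ll 1/\sqrt{x} + \sqrt{x}\log x/T$. After substituting $s \mapsto s - 1/2$ in the resulting Perron integral, I am left with
\begin{equation*}
\frac{1}{2\pi i \sqrt{x}} \int_{1 + 1/\log x - iT}^{1 + 1/\log x + iT} \frac{\zeta(2s)}{\zeta(s)} \frac{x^s}{s - 1/2}\, ds.
\end{equation*}

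Next I would deform this integral onto the rectangle $\CC_{1/2}$ with vertices $1 + 1/\log x \pm iT_v$ and $\e \pm iT_v$, where $0 < \e < 1/2$ is fixed and $T = T_v \in \TT$ is chosen to control $1/\zeta$ on the horizontal sides via \eqref{1/zetauniformbounds}. The interior contains a double pole at $s = 1/2$ together with simple poles at each nontrivial zero $\rho = 1/2 + i\gamma$ with $|\gamma| < T_v$, the latter contributing $\sum_{|\gamma| < T_v} \zeta(2\rho) x^{i\gamma}/(i\gamma\, \zeta'(\rho))$ to $L_{1/2}(x)$ after dividing by $\sqrt{x}$. For the double pole I would write the integrand as $g(s)/(s - 1/2)^2$ with $g(s) = (s - 1/2)\zeta(2s) \cdot x^s/\zeta(s)$ holomorphic at $s = 1/2$, and compute the residue as $g'(1/2)$ using the Laurent data
\begin{equation*}
(s - 1/2)\zeta(2s) = \tfrac{1}{2} + \gamma_0 (s - 1/2) + O((s-1/2)^2)
\end{equation*}
together with the Taylor expansion of $x^s/\zeta(s)$ about $s = 1/2$; after dividing by $\sqrt{x}$ this produces the three explicit constants displayed in \eqref{L{1/2}(x)alongT_v}.

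The left vertical side of $\CC_{1/2}$, once completed to $\e \pm i\infty$, becomes $I_{1/2}(x)$, while the tails from $\pm iT_v$ out to $\pm i\infty$ are bounded using \eqref{zetauniformbounds} and \eqref{1/zetanonuniformbounds} at $\sigma = \e$, contributing a term of order $x^{\e}/T_v^{\e/3 + 1/2}$ that is absorbed into the error. The two horizontal sides are controlled via \eqref{zetauniformbounds} and \eqref{1/zetauniformbounds}, splitting the $\sigma$-range at $1/4$ exactly as in the proof of Theorem \ref{ThmL(x)alongT_v}, and produce a bound of order $x^{1/4}/T_v^{1/2} + \sqrt{x}/(T_v^{1-\e}\log x)$. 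Combining these with the Perron error recovers the claimed $R_{1/2}(x, T_v)$, and $\lim_{v \to \infty} R_{1/2}(x, T_v) = 0$ follows from $T_v \to \infty$. The main obstacle is merely the bookkeeping at the double pole: the three distinct constants in \eqref{L{1/2}(x)alongT_v} arise from different combinations of the subleading coefficients of $\zeta(2s)$, $1/\zeta(s)$, and $x^s$ near $s = 1/2$, and any miscount scrambles them; all the contour estimates themselves are routine parallels of those in the $0 \leq \alpha < 1/2$ case.
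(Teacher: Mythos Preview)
Your approach is correct and matches the paper exactly: the paper simply declares this to be ``a simple modification of the proof'' of Theorem~\ref{ThmL(x)alongT_v}, the only new feature being the double pole at $s=1/2$, whose residue it computes from $\zeta(s)=1/(s-1)+\gamma_0+O(s-1)$ just as you outline. One small slip to fix: the horizontal and tail bounds should read $x^{-1/4}/T_v^{1/2}$ and $x^{\e-1/2}/T_v^{\e}$ rather than $x^{1/4}/T_v^{1/2}$ and $x^{\e}/T_v^{\e/3+1/2}$---you appear to have substituted $\alpha=1/2$ directly into the $\alpha<1/2$ bounds without adjusting for the $1/\sqrt{x}$ prefactor and for the left edge now sitting at $\sigma=\e$ rather than $\sigma=\e+\alpha$---but the corrected terms are absorbed into $R_{1/2}(x,T_v)$ just as easily.
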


The key difference is that we have a double pole at $s = 1/2$ with
	\[\Res_{s = 1/2} \frac{\zeta(2s)}{\zeta(s)} \frac{x^{s - 1/2}}{s - 1/2} = \frac{\log x}{2 \zeta(1/2)} + \frac{\gamma_0}{\zeta(1/2)} - \frac{\zeta'(1/2)}{\zeta(1/2)^2},
\]
which is easily verified by the fact that
	\[\zeta(s) = \frac{1}{s - 1} + \gamma_0 + O(s - 1)
\]
as $s$ tends to $1$ \cite[Corollary 1.16]{Montgomery}.

Next, we must show that $I_{1/2}(x) = O(1)$. This is slightly different to Lemma \ref{Fawazalphaboundlemma}, as the bound \eqref{Fawazbound} is inadequate to show the convergence of
	\[\int^{x}_{0}{\frac{I(u)}{u^{3/2}} \, du}.
\]
However, the bound \eqref{Fawazbound} does suffice to show that
	\[\int^{x}_{1}{\frac{I(u)}{u^{3/2}} \, du} = \frac{1}{2\pi i} \int^{\e + i\infty}_{\e - i\infty}{\frac{\zeta(2s)}{\zeta(s)} \frac{x^{s - 1/2}}{s(s - 1/2)} \, ds} - \frac{1}{2\pi i} \int^{\e + i\infty}_{\e - i\infty}{\frac{\zeta(2s)}{\zeta(s)} \frac{1}{s(s - 1/2)} \, ds},
\]
for all $x \geq 1$, and hence that for $x \geq 1$,
	\[I_{1/2}(x) = \frac{I(x)}{\sqrt{x}} + \frac{1}{2} \int^{x}_{1}{\frac{I(u)}{u^{3/2}} \, du} + \frac{1}{4\pi i} \int^{\e + i\infty}_{\e - i\infty}{\frac{\zeta(2s)}{\zeta(s)} \frac{1}{s(s - 1/2)} \, ds},
\]
and this is $\ll 1$ as $x$ tends to infinity. This allows us to mimic the proof of Corollary \ref{CorL(x)alongT} in order to obtain the following analogous result.

\begin{corollary}{\label{CorL{1/2}(x)arbitraryT}}
Assume RH and $J_{-1}(T) \ll T$. Then for all $T \geq 1$ and $x \geq 1$, we have that
	\[L_{1/2}(x) = \frac{\log x}{2 \zeta(1/2)} + \sum_{|\gamma| < T}{\frac{\zeta(2\rho)}{\zeta'(\rho)} \frac{x^{i \gamma}}{i \gamma}} + R_{1/2}(x,T),
\]
where for arbitrary small $0 < \e < 1/2$,
	\[R_{1/2}(x,T) \ll 1 + \frac{\sqrt{x} \log x}{T} + \frac{\sqrt{x}}{T^{1-\e} \log x}.
\]
\end{corollary}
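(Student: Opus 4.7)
The plan is to mimic the proof of Corollary \ref{CorL(x)alongT} essentially verbatim, with the bound $I_{1/2}(x) \ll 1$ established in the paragraph preceding the statement playing the role that Lemma \ref{Fawazalphaboundlemma} plays in the $\alpha < 1/2$ case. Starting from the explicit formula \eqref{L{1/2}(x)alongT_v} along the sequence $T_v \in \TT$, I would first observe that the three subleading contributions---namely $E_{1/2}(x)$, the fixed constant $\gamma_0/\zeta(1/2) - \zeta'(1/2)/\zeta(1/2)^2$, and $I_{1/2}(x)$---are each $O(1)$ uniformly in $x$, and so may be absorbed into the $O(1)$ piece of $R_{1/2}(x,T)$.

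To replace $T_v$ by an arbitrary $T \geq 1$, I would choose $v$ so that the value $T_v$ lies within distance one of $T$, and write
\[
\sum_{|\gamma| < T_v} \frac{\zeta(2\rho)}{\zeta'(\rho)} \frac{x^{i\gamma}}{i\gamma} = \sum_{|\gamma| < T} \frac{\zeta(2\rho)}{\zeta'(\rho)} \frac{x^{i\gamma}}{i\gamma} \pm \sum_{\min(T,T_v) \leq |\gamma| < \max(T,T_v)} \frac{\zeta(2\rho)}{\zeta'(\rho)} \frac{x^{i\gamma}}{i\gamma}.
\]
Then by Cauchy--Schwarz, the estimate \eqref{J_{-1}(2T) - J_{-1}(T)} applied with $\alpha = 1/2$ (so that $|\rho - 1/2| = |\gamma|$), and the local density bound \eqref{N(T+1)}, the window sum is bounded by
\[
\left|\sum_{\min(T,T_v) \leq |\gamma| < \max(T,T_v)} \frac{\zeta(2\rho)}{\zeta'(\rho)} \frac{x^{i\gamma}}{i\gamma}\right| \ll \frac{(\log T)^{1/2} \log \log T}{\sqrt{T}},
\]
which is $O(1)$. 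Notably, and in contrast to the $\alpha < 1/2$ case, no $\sqrt{x}$ factor appears here because $\lvert x^{i\gamma}/(i\gamma) \rvert = 1/|\gamma|$ carries no dependence on $x$; the window sum can therefore be absorbed into the constant part of the error as well.

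No step poses a serious obstacle. The only point requiring genuine care---and the one that already receives a separate treatment in the excerpt---is the uniform bound $I_{1/2}(x) \ll 1$, which is subtler than the $\alpha < 1/2$ analogue because the naive integral $\int_0^x I(u) u^{-3/2} \, du$ diverges at $0$; this is handled by rewriting $I_{1/2}(x) = I(x)/\sqrt{x} + \tfrac{1}{2}\int_1^x I(u) u^{-3/2}\,du + (\text{constant})$ and invoking the Fawaz estimate $I(u) = 1 + O(u^{-1/2})$ to guarantee convergence at infinity. Once this is in place, every other estimate transcribes directly from the proof of Corollary \ref{CorL(x)alongT}, and the claimed form of $R_{1/2}(x,T)$ follows immediately.
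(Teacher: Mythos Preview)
Your proposal is correct and follows exactly the approach the paper indicates: it mimics the proof of Corollary \ref{CorL(x)alongT} step for step, absorbing $E_{1/2}(x)$, the constant $\gamma_0/\zeta(1/2) - \zeta'(1/2)/\zeta(1/2)^2$, and $I_{1/2}(x)$ into the $O(1)$ error, and then passing from $T_v$ to arbitrary $T$ via Cauchy--Schwarz, \eqref{J_{-1}(2T) - J_{-1}(T)}, and \eqref{N(T+1)}. Your observation that the window sum carries no $x^{1/2-\alpha}$ factor when $\alpha = 1/2$ (so it is absorbed by the $O(1)$ term rather than the $\sqrt{x}$ terms) is the only place the argument differs in detail, and you handle it correctly.
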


Note that the constant terms in \eqref{L{1/2}(x)alongT_v} have now been absorbed by the error term $R_{1/2}(x,T)$.

Finally, we are able to prove Theorem \ref{alpha=1/2theorem}. It suffices to prove the following proposition; Theorem \ref{alpha=1/2theorem} then follows as $\log x / (2 \zeta(1/2))$ is negative and grows faster than $(\log \log x)^{3/2} \log \log \log x$. 

\begin{proposition}[cf.{} {\cite[Theorem 1 (ii)]{Ng}}]
Assume RH and $J_{-1}(T) \ll T$. Then for some sufficiently large $\beta > 0$, the set
	\[S_{1/2} = \left\{x \in [e^9,\infty) : \left|L_{1/2}(x) - \frac{\log x}{2 \zeta(1/2)}\right| \geq \beta (\log \log x)^{3/2} \log \log \log x\right\}
\]
has logarithmic density zero.
\end{proposition}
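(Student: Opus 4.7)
My plan is to decompose $L_{1/2}(x) - \log x/(2\zeta(1/2))$ via Corollary \ref{CorL{1/2}(x)arbitraryT} into a low-zero piece that is pointwise controlled and a high-zero piece whose logarithmic mean square is negligible, then apply Chebyshev's inequality on dyadic intervals $[e^N, e^{N+1}]$ to conclude that the exceptional set has logarithmic density zero.

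For each $N \geq 9$, I would set $X = e^N$, $T_0 = \log X = N$, and $T_1 = X$. Corollary \ref{CorL{1/2}(x)arbitraryT} applied at $T = T_1$ gives, for $x \in [X, eX]$,
\begin{equation*}
L_{1/2}(x) - \frac{\log x}{2\zeta(1/2)} = E^{(T_0)}(x) + \widetilde{\e}(x) + R_{1/2}(x, T_1),
\end{equation*}
where
\begin{equation*}
E^{(T_0)}(x) = \sum_{|\gamma| < T_0} \frac{\zeta(2\rho)}{\zeta'(\rho)} \frac{x^{i\gamma}}{i\gamma}, \qquad \widetilde{\e}(x) = \sum_{T_0 \leq |\gamma| < T_1} \frac{\zeta(2\rho)}{\zeta'(\rho)} \frac{x^{i\gamma}}{i\gamma}.
\end{equation*}
A direct computation with $T_1 = X$ gives $R_{1/2}(x, T_1) \ll 1$ for $x \in [X, eX]$. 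Meanwhile, by \eqref{J_{-1/2}(T)} applied at $\alpha = 1/2$,
\begin{equation*}
|E^{(T_0)}(x)| \leq \sum_{|\gamma| < T_0} \frac{|\zeta(2\rho)|}{|\gamma| |\zeta'(\rho)|} \ll (\log T_0)^{3/2} \log\log T_0 \ll (\log\log X)^{3/2} \log\log\log X
\end{equation*}
uniformly in $x \in [X, eX]$.

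For the tail, Lemma \ref{keyboundlemma} (applied with $\alpha = 1/2$, $Z = X$, $T = T_0$) yields
\begin{equation*}
\int_X^{eX} |\widetilde{\e}(x)|^2 \, \frac{dx}{x} \ll \frac{\log T_0 \, (\log \log T_0)^2}{T_0^{1/4}} \ll \frac{\log\log X \cdot (\log\log\log X)^2}{(\log X)^{1/4}}.
\end{equation*}
Setting $V_N = (\log\log X)^{3/2} \log\log\log X$, Chebyshev's inequality then shows that the exceptional set
\begin{equation*}
\mathcal{B}_N = \left\{x \in [X, eX] : |\widetilde{\e}(x)| \geq V_N \right\}
\end{equation*}
has logarithmic measure at most $O\bigl((\log\log X)^{-2} (\log X)^{-1/4}\bigr)$, which tends to zero as $N \to \infty$.

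Choosing $\beta$ large enough to absorb both the pointwise constant from $E^{(T_0)}$ and the $O(1)$ remainder from $R_{1/2}$ (using $x \geq e^9$ so that $(\log\log x)^{3/2} \log\log\log x$ is bounded below by a positive constant), every $x \in [X, eX] \setminus \mathcal{B}_N$ lies outside $S_{1/2}$. Summing the log-measure of $\mathcal{B}_N$ over $N = 9, \ldots, \lfloor \log Y \rfloor$ and invoking Ces\`aro summation gives $\int_{S_{1/2} \cap [e^9, Y]} dx/x = o(\log Y)$, hence $\delta(S_{1/2}) = 0$. The main subtlety lies in the balanced choice $T_0 = \log X$: a smaller $T_0$ would make the tail mean square dominate $V_N^2$ and destroy the Chebyshev argument, while a larger $T_0$ would push the pointwise bound on $E^{(T_0)}$ above the target threshold $(\log\log x)^{3/2} \log\log\log x$; it is precisely the $T_0^{-1/4}$ decay in Lemma \ref{keyboundlemma} that makes $T_0 = \log X$ the sweet spot at which both constraints are met simultaneously.
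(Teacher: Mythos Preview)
Your proof is correct and follows essentially the same approach as the paper: decompose via Corollary~\ref{CorL{1/2}(x)arbitraryT}, bound the low-zero piece pointwise by \eqref{J_{-1/2}(T)}, bound the high-zero piece in logarithmic mean square by Lemma~\ref{keyboundlemma}, and apply Chebyshev on intervals $[e^k,e^{k+1}]$. The only difference is the split point: the paper cuts at $T_0=(\log x)^4$ rather than your $T_0=\log x$, which makes the Chebyshev bounds sum to a \emph{convergent} series $\sum_k 1/(k(\log k)^2)$ and hence yields the slightly stronger statement $\int_{S_{1/2}} dx/x<\infty$; your choice gives terms of order $k^{-1/4}(\log k)^{-2}$, whose partial sums are $o(\log Y)$ by Ces\`aro, which is still enough for $\delta(S_{1/2})=0$. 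Your closing remark that $T_0=\log x$ is a forced ``sweet spot'' is a little overstated---any $T_0=(\log x)^c$ with $c>0$ keeps the pointwise bound at the target size while making the tail mean square decay---but this does not affect the validity of the argument.
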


\begin{proof}
By Corollary \ref{CorL{1/2}(x)arbitraryT}, for $x \geq 1$ and $T \geq 1$,
\begin{align*}
L_{1/2}(x) & = \frac{\log x}{2 \zeta(1/2)} + \sum_{|\gamma| < (\log T)^4}{\frac{\zeta(2\rho)}{\zeta'(\rho)} \frac{x^{i \gamma}}{i \gamma}} + \sum_{(\log T)^4 \leq |\gamma| < T}{\frac{\zeta(2\rho)}{\zeta'(\rho)} \frac{x^{i \gamma}}{i \gamma}}	\\
& \qquad + O\left(1 + \frac{\sqrt{x} \log x}{T} + \frac{\sqrt{x}}{T^{1-\e} \log x}\right)
\end{align*}
for arbitrary small $\e > 0$. By \eqref{J_{-1/2}(T)},
	\[\sum_{|\gamma| < (\log T)^4}{\frac{\zeta(2\rho)}{\zeta'(\rho)} \frac{x^{i \gamma}}{i \gamma}} \ll \sum_{0 < \gamma < (\log T)^4}{\frac{|\zeta(2\rho)|}{\gamma |\zeta'(\rho)|}} \ll (\log \log T)^{3/2} \log \log \log T.
\]
Thus if we restrict ourselves to the range $T \leq x \leq eT$,
	\[L_{1/2}(x) = \frac{\log x}{2 \zeta(1/2)} + \sum_{(\log T)^4 \leq |\gamma| < T}{\frac{\zeta(2\rho)}{\zeta'(\rho)} \frac{x^{i \gamma}}{i \gamma}} + O\left((\log \log x)^{3/2} \log \log \log x\right).
\]
Let $C > 0$ be the implicit constant in the error term above. Then if $x \in S_{1/2} \cap [T, eT]$,
\begin{align*}
\left|\sum_{(\log T)^4 \leq |\gamma| < T}{\frac{\zeta(2\rho)}{\zeta'(\rho)} \frac{x^{i \gamma}}{i \gamma}}\right| & \geq \left|L_{1/2}(x) - \frac{\log x}{2 \zeta(1/2)}\right| - C(\log \log x)^{3/2} \log \log \log x	\\
& \geq (\beta - C) (\log \log x)^{3/2} \log \log \log x
\end{align*}
and hence if $\beta > C - 1$, then
	\[\left|\sum_{(\log T)^4 \leq |\gamma| < T}{\frac{\zeta(2\rho)}{\zeta'(\rho)} \frac{x^{i \gamma}}{i \gamma}}\right| \geq (\log \log x)^{3/2} \log \log \log x
\]
for $x \in S_{1/2} \cap [T, eT]$. By taking $T = e^k$ for any $k \geq 9$, we see that if $x \in S_{1/2} \cap [e^k, e^{k + 1}]$, then
	\[\frac{1}{(\log k)^3 (\log \log k)^2} \left|\sum_{k^4 \leq |\gamma| < e^k}{\frac{\zeta(2\rho)}{\zeta'(\rho)} \frac{x^{i \gamma}}{i \gamma}}\right|^2 \geq 1.
\]
Thus for any $X \geq e^9$,
\begin{align*}
\int_{S_{1/2} \cap [1,X]}{\frac{dx}{x}} & \leq \sum^{\lfloor \log X \rfloor}_{k = 9} \int_{S_{1/2} \cap [e^k,e^{k + 1}]}{\frac{dx}{x}}	\\
& \leq \sum^{\lfloor \log X \rfloor}_{k = 9} \frac{1}{(\log k)^3 (\log \log k)^2} \int^{e^{k + 1}}_{e^k}{\left|\sum_{k^4 \leq |\gamma| < e^k}{\frac{\zeta(2\rho)}{\zeta'(\rho)} \frac{x^{i \gamma}}{i \gamma}}\right|^2 \, \frac{dx}{x}}.
\end{align*}
We may bound these integrals via Lemma \ref{keyboundlemma} with $T = k^4$, $X = Z = e^k$. Thus
	\[\int_{S_{1/2} \cap [1,X]}{\frac{dx}{x}} \ll \sum^{\lfloor \log X \rfloor}_{k = 9} \frac{1}{k (\log k)^2} \ll 1,
\]
and hence
	\[\delta(S_{1/2}) = \lim_{X \to \infty} \frac{1}{\log X} \int_{S_{1/2} \cap [1,X]}{\frac{dx}{x}} = 0.
\qedhere\]
\end{proof}

Finally, we mention that a modification of the heuristic argument of Ng that suggests that the correct order of growth of $M(x)$ is $\sqrt{x} (\log \log \log x)^{5/4}$ yields a similar heuristic for the order of growth of
	\[L_{1/2}(x) - \frac{\log x}{2 \zeta(1/2)}.
\]
This leads us to suggest the following refinement of the $\alpha = 1/2$ conjecture.

\begin{conjecture}\label{L1/2conj}
As $x$ tends to infinity,
	\[L_{1/2}(x) \sim \frac{\log x}{2 \zeta(1/2)}.
\]
\end{conjecture}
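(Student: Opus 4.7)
My plan is to establish the conjecture by sharpening the explicit formula already at hand in Corollary \ref{CorL{1/2}(x)arbitraryT}. The claim $L_{1/2}(x) \sim \log x/(2 \zeta(1/2))$ is equivalent to the assertion
\[
L_{1/2}(x) - \frac{\log x}{2 \zeta(1/2)} = o(\log x)
\]
as $x$ tends to infinity. Taking $T = x$ in Corollary \ref{CorL{1/2}(x)arbitraryT} makes the remainder $R_{1/2}(x,x) = O(1)$ for any sufficiently small $\e > 0$, so the problem reduces to proving the pointwise estimate
\[
\sum_{|\gamma| < x}{\frac{\zeta(2\rho)}{\zeta'(\rho)} \frac{x^{i\gamma}}{i\gamma}} = o(\log x).
\]

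Next I would split this sum at a parameter $Y = (\log x)^{A}$ for some large constant $A > 0$. For the low-frequency range $|\gamma| < Y$, the estimate \eqref{J_{-1/2}(T)} applied to $Y$ in place of $T$ yields the triangle-inequality bound
\[
\sum_{|\gamma| < Y}{\left|\frac{\zeta(2\rho)}{\zeta'(\rho) \gamma}\right|} \ll (\log \log x)^{3/2} \log \log \log x,
\]
which is already $o(\log x)$ with significant room to spare. The delicate piece is the high-frequency tail $Y \leq |\gamma| < x$; the triangle inequality here still produces a bound of size roughly $(\log x)^{3/2} \log \log x$, well in excess of $\log x$, so one must genuinely extract cancellation from the phases $x^{i\gamma}$. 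I would attempt this dyadically, setting $T_j = 2^j Y$ and invoking an $L^\infty$ analogue of Lemma \ref{keyboundlemma}: the logarithmic-mean-square bound proved there, combined with the incoherence of the phases $x^{i\gamma}$ predicted by the Linear Independence hypothesis, suggests each dyadic piece should contribute $O(1)$ for all but a sparse set of $x$.

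The central obstacle is precisely converting this mean-square control into an $L^\infty$ bound that holds for \emph{every} sufficiently large $x$. The proposition proved immediately above already gives the required estimate outside a set of logarithmic density zero, but $L_{1/2}(x)$ is a step function with jumps of order $1/\sqrt{n}$ at every integer $n$, so no monotonicity or slow-variation property is available to upgrade a density-one bound to a uniform bound. Closing this gap would seem to require either a Soundararajan-style smoothing argument as in \eqref{soundmobius} adapted to the weight $n^{-1/2}$, or a substantially stronger joint hypothesis on $|\zeta'(\rho)|$ and on the fine linear-independence structure of the $\gamma$ than $J_{-1}(T) \ll T$ and LI alone provide. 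It is this step that explains why the statement is recorded here as a conjecture rather than a theorem: we can isolate the main term $\log x/(2\zeta(1/2))$ and we can control the remainder on a set of density one, but eliminating the sparse set of potential exceptional $x$ appears to lie beyond the reach of the present methods.
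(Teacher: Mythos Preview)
Your proposal correctly recognises that the statement is a \emph{conjecture}, not a theorem, and that the paper offers no proof. Your diagnosis of the obstruction is accurate: the proposition immediately preceding the conjecture controls $L_{1/2}(x) - \log x/(2\zeta(1/2))$ only outside a set of logarithmic density zero, and upgrading this to a pointwise $o(\log x)$ bound would require genuine cancellation in the high-frequency sum $\sum_{Y \leq |\gamma| < x} \zeta(2\rho) x^{i\gamma}/(i\gamma \zeta'(\rho))$ valid for \emph{every} large $x$, which the hypotheses RH, LI, and $J_{-1}(T) \ll T$ do not obviously yield.

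The paper's own justification for stating the conjecture takes a different route from yours. Rather than attempting to push the explicit formula and isolating the step that fails, the paper appeals to a heuristic of Ng concerning the maximal order of growth of $M(x)$: Ng argues that the correct maximal order of $M(x)/\sqrt{x}$ should be $(\log\log\log x)^{5/4}$, and the analogous heuristic applied to $L_{1/2}(x) - \log x/(2\zeta(1/2))$ suggests the same order of growth, which is certainly $o(\log x)$. The paper also notes Wintner's observation that the (likely false) hypothesis $L(x) = O(\sqrt{x})$ would imply the conjecture directly. Your explicit-formula analysis and the paper's growth-rate heuristic are complementary rather than competing: yours makes precise \emph{where} a proof would have to extract cancellation, while the paper's heuristic indicates \emph{how small} the remainder is expected to be. Both arrive at the same conclusion that a proof is currently out of reach.
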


This conjectural asymptotic was first put forth by Wintner \cite[Equation (2)]{Wintner}, who observed that the hypothesis $L(x) = O\left(\sqrt{x}\right)$ implies Conjecture \ref{L1/2conj}; in light of Theorem \ref{Inghamthm}, however, this hypothesis seems most likely false. The remarkable observation is that Ng's heuristic argument on the correct order of growth of $M(x)$ suggests that Conjecture \ref{L1/2conj} is nevertheless true.

\subsection*{Acknowledgements}

The author is greatly indebted Richard Brent for first suggesting this approach to P\'{o}lya's conjecture as an undergraduate project, and for the many helpful discussions that followed. Many thanks also to Tim Trudgian for his suggestion of possible extension of the author's results, and for the feedback that he provided. The author would also like to thank the anonymous referee for the many corrections and helpful suggestions.


\begin{thebibliography}{88}

\bibitem {Anderson} Robert J. Anderson and H. M. Stark, ``Oscillation Theorems'', in \textit{Analytic Number Theory: Proceedings of a Conference Held at Temple University, Philadelphia, May 12--15, 1980}, editor Martin I. Knopp, Lecture Notes in Mathematics \textbf{899}, Springer, Berlin, 1981, 79--106.

\bibitem {Balazard} Michel Balazard and Anne de Roton, ``Notes de lecture d'article \guillemotleft{}Partial sums of the M\"{o}bius function\guillemotright{} de Kannan Soundararajan'', preprint, \href{http://arxiv.org/abs/0810.3587}{arXiv:math.NT/0810.3587} (21 October 2008), 25 pages.

\bibitem {Bateman} P. T. Bateman, J. W. Brown, R. S. Hall, K. E. Kloss, and Rosemarie M. Stemmler, ``Linear Relations Connecting the Imaginary Parts of the Zeros of the Zeta Function'', in \textit{Computers in Number Theory}, editors A. O. L. Atkin and B. J. Birch, Academic Press, London, 1971, 11--19.

\bibitem {Borwein} Peter Borwein, Ron Ferguson, and Michael J. Mossinghoff, ``Sign Changes in Sums of the Liouville Function'', \textit{Mathematics of Computation} \textbf{77} (2008), no. 263, 1681--1694.

\bibitem {Fawaz} A. Y. Fawaz, ``The Explicit Formula for $L_0(x)$'', \textit{Proceedings of the London Mathematical Society} \textbf{1} (1951), no. 3, 86--103.

\bibitem {Feuerverger} Andrey Feuerverger and Greg Martin, ``Biases in the Shanks--R\'{e}nyi Prime Number Race'', \textit{Experimental Mathematics} \textbf{9} (2000), no. 4, 535--570.

\bibitem {Garaev} M. Z. Garaev and A. Sankaranarayanan, ``The Sum Involving the Derivative of $\zeta(s)$ over Simple Zeros'', \textit{Journal of Number Theory} \textbf{117} (2006), 122--130.

\bibitem {Gonek} S. M. Gonek, ``On Negative Moments of the Riemann Zeta-Function'', \textit{Mathematika} \textbf{36} (1989), 71--88.

\bibitem {Haselgrove} C. B. Haselgrove, ``A Disproof of a Conjecture of P\'{o}lya'', \textit{Mathematika} \textbf{5} (1958), 141--145.

\bibitem {Hejhal} Dennis A. Hejhal, ``On the Distribution of $\log |\zeta'(\frac{1}{2}+it)|$'', in \textit{Number Theory, Trace Formulas and Discrete Groups: Symposium in Honor of Atle Selberg, July 14--21, 1987}, editors Karl Egil Aubert, Enrico Bombieri, and Dorian Goldfeld, Academic Press, Boston, 1989, 343--370.

\bibitem {Hughes} C. P. Hughes, J. P. Keating, and Neil O'Connell, ``Random Matrix Theory and the Derivative of the Riemann Zeta-Function'', \textit{Proceedings of the Royal Society of London Serial A} \textbf{456} (2000), 2611--2627.

\bibitem {Ingham} A. E. Ingham, ``On Two Conjectures in the Theory of Numbers'', \textit{American Journal of Mathematics} \textbf{64} (1942), 313--319.

\bibitem {Ivic} Aleksandar Ivi\'{c}, ``The Moments of the Zeta-function on the Line $\sigma = 1$'', \textit{Nagoya Mathematics Journal} \textbf{135} (1994), 113--120.

\bibitem {Korobov} N. M. Korobov, ``Estimates of Trigonometric Sums and Their Applications'', \textit{Uspekhi
Matematicheskikh Nauk} \textbf{13} (1958), 185--192.

\bibitem {Kotnik} Tadej Kotnik and Jan van de Lune, ``On the Order of the Mertens Function'', \textit{Experimental Mathematics} \textbf{13} (2004), no. 4, 473--481.

\bibitem {Lamzouri} Youness Lamzouri, ``Prime Number Races with Three or More Competitors'', submitted for publication, \href{http://arxiv.org/abs/1101.0836}{arXiv:math.NT/1101.0836} (4 January 2011), 38 pages.

\bibitem {Lapidus} Michel L. Lapidus and Machiel van Frankenhuijsen, \textit{Fractal Geometry and Number Theory: Complex Dimensions of Fractal Strings and Zeros of Zeta Functions}, Birkh\"{a}user--Verlag, Boston, 1999.

\bibitem {Martin1} Greg Martin and Nathan Ng, ``Inclusive Prime Number Races'', in preparation.

\bibitem {Martin2} Greg Martin and Nathan Ng, ``Nonzero Values of Dirichlet $L$-Functions at Linear Combinations of Other Zeros'', in preparation.

\bibitem {Martin3} Greg Martin and Nathan Ng, ``Nonzero Values of Dirichlet $L$-Functions in Vertical Arithmetic Progressions'', submitted for publication, \href{http://arxiv.org/abs/1109.1788}{arXiv:math.NT/1109.1788} (8 September 2011), 25 pages.

\bibitem {Milinovich1} Micah B. Milinovich, ``Upper Bounds for the Moments of $\zeta'(\rho)$'', \textit{Bulletin of the London Mathematical Society} \textbf{42} (2010), 28--44.

\bibitem {Milinovich2} Micah B. Milinovich and Nathan Ng, ``Lower Bounds for Moments of $\zeta'(\rho)$'', submitted for publication, \href{http://arxiv.org/abs/0706.2321}{arXiv:math.NT/0706.2321} (15 June 2007), 7 pages.

\bibitem {Montgomery2} Hugh L. Montgomery, ``The Zeta Function and Prime Numbers'', in \textit{Proceedings of the Queen's Number Theory Conference, 1979}, editor P. Ribenboim, Queen's Papers in Pure and Applied Mathematics \textbf{54}, Queen's University, Kingston, Ontario, 1980, 1--31.

\bibitem {Montgomery} Hugh L. Montgomery and Robert C. Vaughan, \textit{Multiplicative Number Theory: I. Classical Theory}, Cambridge Studies in Advanced Mathematics \textbf{97}, Cambridge University Press, Cambridge, 2007.

\bibitem {Mossinghoff} Michael J. Mossinghoff and Timothy S. Trudgian, ``Between the Problems of P\'{o}lya and Tur\'{a}n'', to appear in \textit{Journal of the Australian Mathematical Society}, \url{http://www.davidson.edu/math/mossinghoff/BetweenPolyaTuran_MT.pdf} (28 September 2011), 13 pages.

\bibitem {Ng2} Nathan Ng, \textit{Limiting Distributions and Zeros of Artin $L$-Functions}, Ph.D. Thesis, University of British Columbia, 2000.

\bibitem {Ng} Nathan Ng, ``The Distribution of the Summatory Function of the M\"{o}bius Function'', \textit{Proceedings of the London Mathematical Society} \textbf{89} (2004), 361--389.

\bibitem {Polya} Georg P\'{o}lya, ``Verschiedene Bemerkungen zur Zahlentheorie'', \textit{Jahresbericht der Deutschen Mathematiker-Vereinigung} \textbf{28} (1919), 31--40.

\bibitem {Rubinstein} Michael Rubinstein and Peter Sarnak, ``Chebyshev's Bias'', \textit{Experimental Mathematics} \textbf{3} (1994), no. 3, 173--197.

\bibitem {Soundararajan} K. Soundararajan, ``Partial Sums of the M\"{o}bius Function'', \textit{Journal f\"{u}r die Reine und Angewandte Mathematik} \textbf{631} (2009), 141--152.

\bibitem {Tanaka} M. Tanaka, ``A Numerical Investigation on Cumulative Sum of the Liouville Function'', \textit{Tokyo Journal of Mathematics} \textbf{3} (1980), 187--189.

\bibitem {Vinogradov} I. M. Vinogradov, ``A New Estimate for $\zeta(1 + it)$'', \textit{Izvestiya Akademii Nauk SSSR. Seriya Matematicheskaya} \textbf{22} (1958), 161--164.

\bibitem {Watson} G. N. Watson, \textit{A Treatise on the Theory of Bessel Functions, $2^{\textit{nd}}$ Edition}, Cambridge University Press, Cambridge, 1948.

\bibitem {Wintner} Aurel Wintner, ``On the $\lambda$-variant of Mertens' $\mu$-hypothesis'', \textit{American Journal of Mathematics} \textbf{80} (1958), no. 3, 639--642.

\end{thebibliography}
\end{document}